\DeclareMathOperator{\dist}{dist}                           
\DeclareMathOperator{\lspan}{span}                          
\DeclareMathOperator{\conv}{conv}                           
\DeclareMathOperator{\supp}{supp}                           
\DeclareMathOperator{\diam}{diam}                           
\DeclareMathOperator{\Lip}{Lip}                             
\newcommand{\N}{\mathbb{N}}             
\newcommand{\M}{\mathbb{M}}             
\newcommand{\Z}{\mathbb{Z}}             
\newcommand{\Integer}{\Z}             
\newcommand{\R}{\mathbb{R}}             
\newcommand{\Real}{\R}             
\newcommand{\abs}[1]{\left|{#1}\right|}                     
\newcommand{\cardinality}[1]{\abs{#1}}
\newcommand{\pare}[1]{\left({#1}\right)}                    
\newcommand{\set}[1]{\left\{{#1}\right\}}                   
\newcommand{\norm}[1]{\left\|{#1}\right\|}                  
\newcommand{\duality}[1]{\left<{#1}\right>}                 
\newcommand{\cl}[1]{\overline{#1}}                          
\newcommand{\lipfree}[1]{\mathcal{F}({#1})}                 
\newcommand{\F}{\mathcal{F}}                                
\newcommand{\Free}{\mathcal{F}}                             
\newcommand{\SSM}[2]{\mathrm{CSM}_{#1}\left({#2}\right)}
\newcommand{\restricted}{\mathord{\upharpoonright}}
\renewcommand{\restriction}{\mathord{\upharpoonright}}
\def\<{\langle}
\def\>{\rangle}
\newcommand{\ep}{\varepsilon}
\newcommand{\indicator}[1]{{\mathbf 1}_{{#1}}}
\renewcommand{\hat}{\widehat}
\renewcommand{\leq}{\leqslant}
\renewcommand{\geq}{\geqslant}
\theoremstyle{plain}
\newtheorem{theorem}{Theorem}[section]
\newtheorem{lemma}[theorem]{Lemma}
\newtheorem{corollary}[theorem]{Corollary}
\newtheorem{proposition}[theorem]{Proposition}
\newtheorem{maintheorem}{Theorem} 
\newtheorem{maincorollary}[maintheorem]{Corollary} 
\theoremstyle{definition}
\newtheorem*{definition*}{Definition}
\newtheorem{definition}[theorem]{Definition}
\newtheorem{example}[theorem]{Example}
\newtheorem{question}{Question}
\theoremstyle{remark}
\newtheorem{remark}[theorem]{Remark}
\begin{document}
\title[Weak Compactness and Uniform Regularity in free Spaces]{On Weak Compactness and Uniform Regularity in Lipschitz free Spaces}

\subjclass[2020]{Primary 46B50, 46B20}


\keywords{Lipschitz free space, weakly compact set, V*-set, tightness, uniform regularity}

\author[R. J. Aliaga]{Ram\'on J. Aliaga}
\address[R. J. Aliaga]{Instituto Universitario de Matem\'atica Pura y Aplicada, Universitat Polit\`ecnica de Val\`encia, Camino de Vera S/N, 46022 Valencia, Spain}
\email{raalva@upv.es}

\author[C. Petitjean]{Colin Petitjean}
\address[C. Petitjean]{Univ Gustave Eiffel, Univ Paris Est Creteil, CNRS, LAMA UMR8050, F-77447 Marne-la-Vallée, France}
\email{colin.petitjean@univ-eiffel.fr}

\author[A. Prochazka]{Anton\'in Prochazka}
\address[A. Prochazka]{Université Marie et Louis Pasteur, CNRS, LmB (UMR 6623), F-25000 Besançon, France}
\email{antonin.prochazka@univ-fcomte.fr}

\author[T. Veeorg]{Triinu Veeorg}
\address[T. Veeorg]{University of Tartu, Institute of Mathematics and Statistics, Narva mnt 18, 51009 Tartu, Estonia}
\email{triinu.veeorg@ut.ee}

\begin{abstract}
    We analyze the properties of weakly compact sets in Lipschitz free spaces. Prior research has established that, for a complete metric space $M$, weakly precompact sets in the Lipschitz free space $\mathcal F(M)$ are tight. In this paper,  we prove that these sets actually exhibit a stronger property, which we call uniform regularity. However, this condition alone is not sufficient to characterize weakly compact sets, except in the case of scattered metric spaces. On the other hand, if $T$ is an $\R$-tree, we leverage Godard's isometry between $\mathcal F(T)$ and $L^1(\lambda_T)$ to obtain an intrinsic characterization of weakly compact sets in $\mathcal F(T)$. This approach allows us to identify conditions that may describe weak compactness across a wider range of spaces. In particular, we provide a characterization of norm-compactness in terms of sums of ``large molecules'', while we show that sums of ``small molecules'' contain an $\ell_1$-basis.
\end{abstract}

\maketitle

\section*{Introduction}

The weak topology is of the utmost importance when studying the geometry of Banach spaces. In particular, a good knowledge of weakly compact subsets is crucial. Indeed, many Banach space properties are intimately related to weak compactness, which in turn provide very helpful tools to understand, analyze, and classify Banach spaces. In this paper, we study weakly compact sets in the so-called Lipschitz free spaces. In a nutshell, for a metric space $(M,d)$, the Lipschitz free space $\F(M)$ is the free vector space generated by $M$, equipped with a particular norm which makes it a Banach space. The norm is designed in such a way that $M$ is isometric to a linearly dense subset $\delta(M)$ of $\F(M)$, and moreover, Lipschitz maps from $\delta(M)$ into any Banach space $X$ uniquely extend to bounded linear operators from $\F(M)$ into $X$. See Section~\ref{section:LipFree} for more details.

Weak convergence and weakly compact sets in general Lipschitz free spaces are relatively poorly understood.
Nevertheless, a successful first step was made in \cite{ANPP} by isolating a powerful necessary condition, called \textit{tightness}. Recall that a set of measures $S$ on $M$, equipped with the Borel $\sigma$-algebra, is \textit{tight} if, for any $\ep>0$, there is a compact subset $K \subset M$ such that $|\mu(M \setminus K)| < \ep$ for all measures $\mu \in S$ ($|\mu|$ being the total variation of $\mu$). By analogy to this notion, a bounded subset $W$ in a Lipschitz free space $\F(M)$ is called \textit{tight} if, for any $\ep >0$, there is a compact subset $K \subset M$ such that:
$$ W \subset \F(K) + \ep B_{\F(M)},$$
where $B_{\F(M)}$ is the unit ball of $\F(M)$. Roughly speaking, this says that every element of $W$ can be approximated arbitrarily close by members of a Lipschitz free space over a compact subset $K$ of $M$.
It is proved in \cite{ANPP} that ``Every weakly compact subset of a free space is tight''. 
This of course bears a resemblance to Prokhorov's theorem, which asserts that a collection $S$ of probability measures is tight if and only if the closure of $S$ is sequentially compact in the space of probability measures equipped with the topology of weak convergence of measures.
Unfortunately, tightness in Lipschitz free spaces is far from being sufficient for weak compactness. For instance, for any infinite compact metric space $M$, $B_{\F(M)}$ is tight while obviously not weakly compact (infinite-dimensional free spaces are never reflexive). Nevertheless, the discovery of this condition had an immediate effect on the general understanding of linear properties such as the Schur property (SP), weak sequential completeness (wsc), the Radon-Nikod\'ym property (RNP), and the approximation property (AP), to mention a few. See e.g. \cite{AGPP21, ANPP, APQ, curveflat}.

The first half of this paper deals with a strengthening of the tightness condition, called \textit{uniform regularity}.  Let us denote $\mathcal M(K)$ the space of all Radon (i.e. finite and regular) signed Borel measures on a compact Hausdorff space $K$. Recall that a subset $S \subset \mathcal M(K)$ is said to be uniformly regular if given any open set $U \subset K$ and $\ep >0$, there is a compact set $H \subset U$ such that $|\mu(U\setminus H)| < \ep$ for all $\mu \in S$. A famous result due to Grothendieck (see e.g. \cite[Theorem~5.3.2]{AlbiacKalton}) states that a bounded subset $S \subset \mathcal M(K)$ is relatively weakly compact if and only if it is uniformly regular. 
By analogy, we introduce the following definition:

\begin{definition*}
	Let $M$ be a pointed metric space. We say that a bounded subset $W \subset \F(M)$ is \textit{uniformly regular} if for any open set $U \subset M$ and $\ep >0$, there is a compact set $K \subset U$ such that:
	$$ W \subset \F(K \cup (M \setminus U)) + \ep B_{\F(M)}. $$
\end{definition*}

It is rather easy to see that uniform regularity is strictly stronger than tightness (see Remark~\ref{ex:ball_not_UR}). 
Our first goal will be to prove the next result:

\begin{maintheorem} \label{thmA}
	Let $M$ be a complete pointed metric space. If $W\subset\F(M)$ is a V*-set, in particular if $W$ is relatively weakly compact, then it is uniformly regular.
\end{maintheorem}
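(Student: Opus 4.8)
The plan is to imitate, inside the free space, the classical proof that relatively weakly compact subsets of $C(K)^\ast$ are uniformly regular (Grothendieck), exploiting that a set which fails uniform regularity will fail the V$^*$-property. Since relatively weakly compact sets are V$^*$-sets, it suffices to treat a bounded V$^*$-set $W$, and I argue by contraposition: assuming $W$ is not uniformly regular, I will produce a weakly unconditionally Cauchy (WUC) series $\sum_n f_n$ in $\Lip_0(M)=\F(M)^\ast$ and elements $w_n\in W$ with $\langle f_n,w_n\rangle\ge\delta>0$ for all $n$, which contradicts $W$ being a V$^*$-set.

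Fix an open $U\subseteq M$ and $\ep>0$ witnessing the failure, so that for every compact $K\subseteq U$ there is $w\in W$ with $\dist\big(w,\F(K\cup(M\setminus U))\big)>\ep$. The first step is a reduction to a compact setting: V$^*$-sets in $\F(M)$ are tight (this follows by the argument of \cite{ANPP}, whose construction defeating tightness uses WUC families of ``tent'' functionals and hence also defeats the V$^*$-property), so, given a tolerance $\tau\ll\ep$, there is a compact $K_0\subseteq M$ containing the base point with $W\subseteq\F(K_0)+\tau B_{\F(M)}$, and all the relevant mass sits $\tau$-near the compact set $K_0$. The second, and decisive, step is to \emph{localise the failure}: by a compactness/maximality argument on $K_0$, I would produce a single point $p\in K_0$ (necessarily in $\partial U$) at which the failure already occurs in arbitrarily small neighbourhoods, meaning that for every $\rho>0$ and every compact $K\subseteq U$,
\[
\sup_{w\in W}\dist\Big(w,\ \F\big(K\cup(M\setminus U)\cup(M\setminus B(p,\rho))\big)\Big)>\tfrac{\ep}{2},
\]
and moreover that at a sequence of scales $\rho_n\downarrow0$ the ``offending mass'' can be taken to live in a fixed-ratio annulus $B(p,\rho_n)\setminus B(p,\rho_n/2)$ rather than dispersed across all finer scales. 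Turning the abstract defect $\dist(w,\F(A))>\ep$ into such quantitative, scale-localised information --- where the naive device of ``restricting molecules to balls'' breaks down because a molecule may straddle two balls --- is where I expect to use the de Leeuw representation of $\F(M)$ by measures on $\bwt M$ and the attendant calculus of supports and of the quotients $\F(M)/\F(A)$; in particular the subadditivity, over finite open covers, of the ``mass of $w$ over an open set''.

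Granting this localisation, the WUC series is obtained by a recursion through the scales $\rho_n$. At step $n$, for a large compact $L_n\subseteq U$ (increasing with $n$) I select $w_n\in W$ whose offending mass, of norm $>\ep/2$, lies in the annulus $A_n:=\overline{B(p,\rho_n)}\setminus B(p,\rho_n/2)$ after discarding $L_n$ and $M\setminus U$; a norming functional then gives $f_n\in\Lip_0(M)$ with $\|f_n\|_L\le1$, $f_n$ vanishing on $L_n\cup(M\setminus U)\cup(M\setminus B(p,\rho_n))\cup B(p,\rho_n/2)$, and $\langle f_n,w_n\rangle>\ep/2-O(\tau)$. Such an $f_n$ is supported in $A_n$ and, vanishing on both ``walls'' of the annulus, satisfies $\|f_n\|_\infty\le\rho_n/2$. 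Choosing the scales so that $\rho_{n+1}\le\rho_n/4$, the annuli $A_n$ are pairwise disjoint, with $\dist(A_n,A_m)\ge\rho_{\min(n,m)}/4$, while $\|f_n\|_\infty\le\rho_n/2$; since each $f_n$ is a Lipschitz bump vanishing off $A_n$, the standard gluing estimate for disjointly supported bumps yields $\big\|\sum_{n\in F}\pm f_n\big\|_L\le C$ uniformly over finite $F$ and signs, so $\sum_n f_n$ is WUC. As $\langle f_n,w_n\rangle\ge\ep/2-O(\tau)\ge\ep/4$, this is the required contradiction.

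The main obstacle is the localisation step, i.e.\ concentrating the failure of uniform regularity both \emph{spatially}, at a single point of the compact set $K_0$, and \emph{in scale}, so that at infinitely many scales a definite fraction $\gtrsim\ep$ of offending mass is confined to one fixed-ratio annulus. This is exactly where the combinatorics of molecules has to give way to the measure/support machinery over $\bwt M$. The remaining ingredients --- relatively weakly compact sets are V$^*$-sets, tightness of V$^*$-sets via \cite{ANPP}, the elementary estimate $\|f_n\|_\infty\le\rho_n/2$, and the bump-gluing lemma --- are routine or are minor adaptations of known results.
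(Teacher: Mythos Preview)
Your overall plan---contrapose, manufacture disjointly supported Lipschitz functionals $(f_n)$ with $\langle f_n,w_n\rangle$ bounded below, and note that such a family is automatically WUC---is the paper's route as well. The paper packages the hard part as a separate characterisation (Theorem~\ref{thmURequiv}): a bounded $W\subset\F(M)$ is uniformly regular iff $\lim_n\sup_{\gamma\in W}|\langle f_n,\gamma\rangle|=0$ for every $(f_n)\subset B_{\Lip_0(M)}$ with pairwise disjoint sets $\{f_n\neq0\}$. Theorem~\ref{thmA} is then one line, because disjointly supported functions in $B_{\Lip_0(M)}$ form a WUC series (a known lemma). Your final ``disjoint bumps $\Rightarrow$ WUC'' step is correct and coincides with this.

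The genuine gap is your localisation step. You want to pin the failure on a \emph{single} point $p\in K_0$ and then slice into concentric annuli around $p$; you concede this needs ``subadditivity, over finite open covers, of the mass of $w$ over an open set'' and plan to invoke the de~Leeuw picture on $\beta\widetilde{M}$. But the quantity $\dist(w,\F(A))$ admits no evident subadditivity in the complement of $A$---molecules straddle sets, and the de~Leeuw transform gives a \emph{quotient} of a measure space, not a restriction calculus---so the finite-cover argument that works in $\mathcal M(K)$ has no obvious analogue here. Even granting a point $p$, the further pigeonholing into fixed-ratio annuli, with a \emph{different} $w_n$ allowed at each scale, is another unjustified leap.

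The paper bypasses localisation entirely. After the tightness reduction to a compact $K$ (which you also invoke), it runs a direct recursion: at stage $n$, pick $\gamma_n\in W$ far from $\F(K_0\cup\cdots\cup K_{n-1}\cup U^c)$, approximate by a finitely supported $\mu_n\in\F(K)$, let $C_n$ be the finite set $\supp(\mu_n)$ minus the forbidden region, and take $U_n$ (resp.\ $K_n$) to be the union over $u\in C_n$ of open balls of radius $\tfrac14 d(u,\,\text{forbidden})$ (resp.\ closed balls of radius $\tfrac12 d(u,\,\text{forbidden})$, intersected with $K$). A Hahn--Banach plus McShane--Whitney rescaling yields $\dist(\gamma_n,\F(U_n^c))>\ep/16$, and the $\tfrac14$ versus $\tfrac12$ radii force the $U_n$ to be pairwise disjoint by an elementary triangle-inequality check. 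Each $U_n$ is a finite union of balls that may be scattered anywhere in $K$---no single accumulation point is needed, and no measure-theoretic machinery enters.
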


See Section~\ref{section:Defs} for the definition of V*-sets. Unfortunately, uniform regularity alone is not sufficient to characterize weak compactness for a general $M$. However, it does so on every complete scattered metric space.

\begin{maintheorem} \label{thmScattered} 
	Let $M$ be a complete metric space. The following assertions are equivalent:
    \begin{enumerate}
        \item $M$ is scattered;
        \item Every uniformly regular set in $\F(M)$ is relatively compact;
        \item Every uniformly regular set in $\F(M)$ is relatively weakly compact.
        \item  Every uniformly regular set in $\F(M)$ is a V*-set.
    \end{enumerate}
\end{maintheorem}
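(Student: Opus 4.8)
The plan is to prove the cycle $(1)\Rightarrow(2)\Rightarrow(3)\Rightarrow(4)\Rightarrow(1)$. Two of these are soft: $(2)\Rightarrow(3)$ because norm‑compact sets are weakly compact, and $(3)\Rightarrow(4)$ because every relatively weakly compact set is a V*-set --- given a weakly unconditionally Cauchy series $\sum_n\phi_n$ in $\F(M)^*$, the map $u\mapsto(\phi_n(u))_n$ is a bounded operator $\F(M)\to\ell_1$, hence weak‑to‑weak continuous, so it carries a relatively weakly compact set to a relatively weakly compact and therefore (as $\ell_1$ has the Schur property) norm‑compact subset of $\ell_1$, which forces $\sup_{w\in W}|\phi_n(w)|\to0$. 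The content is thus in $(1)\Rightarrow(2)$ and $(4)\Rightarrow(1)$.

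For $(1)\Rightarrow(2)$, let $M$ be complete scattered and $W\subseteq\F(M)$ uniformly regular; I would show $W$ is totally bounded by transfinite induction on the Cantor--Bendixson rank of $M$. The elementary engine is that compact subsets of the set of isolated points of a metric space are finite: applying uniform regularity to the open set $U=M\setminus M'$ ($M'$ the derived set) yields, for each $\ep>0$, a finite $F\subseteq M\setminus M'$ with $W\subseteq\F(F\cup M')+\ep B_{\F(M)}$. If $M'$ is empty (modulo the base point), this already places $W$ within $\ep$ of the finite-dimensional space $\F(F)$, so $W$ is totally bounded; in general one must still handle the component of $W$ living over $M'$, which is complete scattered of smaller rank, so the inductive hypothesis applies there --- after either realising $\F(M')$ as a complemented subspace of $\F(M)$ via a Lipschitz retraction available in this setting, or arguing by a direct approximation. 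It is convenient to reduce beforehand, using tightness, to $M$ separable and then to $M$ compact (equivalently countable), though this reduction needs care since the decompositions $w=f_w+r_w$ produced by tightness are not canonical. Finally, $\F(M)$ has the Schur property for $M$ complete scattered (it is then purely $1$-unrectifiable), so total boundedness coincides with relative weak compactness, yielding $(2)$ and with it $(3)$ and $(4)$.

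For $(4)\Rightarrow(1)$ I would prove the contrapositive: if $M$ is not scattered, construct a uniformly regular subset of $\F(M)$ that is not a V*-set. A complete non-scattered $M$ contains a nonempty compact perfect subset $P$ (build a Cantor scheme inside a bounded non-scattered closed subset). Fix a non-atomic Borel probability measure $\nu$ on $P$; for each $k$ pick finitely many mutually well-separated close pairs $(x_{k,i},y_{k,i})$ in $P$ whose endpoints are $\nu$-equidistributed over $P$, with $\max_i d(x_{k,i},y_{k,i})\to0$, and let $u_k\in\F(P)\subseteq\F(M)$ be the normalised sum of the associated molecules. On the one hand, the ``sums of small molecules span $\ell_1$'' phenomenon makes $(u_k)$ equivalent to the $\ell_1$-basis, hence not a V*-set. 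On the other hand, because the molecules are small and their endpoints spread uniformly over the compact $P$, for any open $U\subseteq M$ and $\ep>0$ one can choose a compact $K\subseteq U$ capturing all but a $\nu$-small --- and uniformly in $k$ --- fraction of the endpoints, so $\{u_k\}\subseteq\F(K\cup(M\setminus U))+\ep B_{\F(M)}$; thus $(u_k)$ is uniformly regular. The guiding example is $M=[0,1]$: through Godard's isometry $\F([0,1])\cong L^1$, $u_k$ becomes a multiple of $\indicator{S_k}/|S_k|$ for $S_k$ a shrinking $\nu$-equidistributed subset of a fat Cantor set --- an $\ell_1$-sequence that fails to be uniformly integrable, hence is not a V*-set since $L^1$ has property (V*), yet is uniformly regular.

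The main obstacles I expect are, in $(1)\Rightarrow(2)$, genuinely upgrading uniform regularity to a finite-dimensional or compact approximation at each inductive step --- including the limit stages and the non-canonical tightness decompositions; and, in $(4)\Rightarrow(1)$, arranging the equidistribution of the molecule endpoints so that the ``small remainder'' estimate is uniform in $k$ and holds against \emph{every} open subset of $M$, which is precisely the mechanism that lets an $\ell_1$-sequence be uniformly regular.
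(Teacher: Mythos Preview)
Your cycle $(1)\Rightarrow(2)\Rightarrow(3)\Rightarrow(4)\Rightarrow(1)$ matches the paper's, and the soft implications are fine. The substantive directions, however, each contain a genuine gap.

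\medskip

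\textbf{In $(4)\Rightarrow(1)$:} Your inference ``$(u_k)$ is equivalent to the $\ell_1$-basis, hence not a V*-set'' is not valid in general. Containing an $\ell_1$-sequence rules out weak precompactness, but V*-sets need not be weakly precompact unless the ambient space has property~(V*), and you cannot assume $\F(M)$ has~(V*) here without circularity. In fact the paper explicitly records (in the remarks after Corollary~\ref{pr:sum_distances_l1_basis}) that it is \emph{open} whether a sequence of small-molecule sums as in Theorem~\ref{thmD} must fail to be a V*-set. The paper's fix is to build the perfect set via a Cantor scheme carrying a bi-Lipschitz map $\varphi$ onto a subset of $[0,1]$; then $\hat\varphi$ pushes the putative V*-set into $\F([0,1])\equiv L^1$, which \emph{does} have~(V*), and there the $\ell_1$-equivalence (via disjoint supports in $L^1$) gives the contradiction. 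Your guiding example already uses this $L^1$ route, but your general argument does not, and the ``hence'' hides exactly the step that fails. Separately, your plan to verify uniform regularity by equidistribution against an \emph{arbitrary} open $U$ is delicate; the paper bypasses this by proving a characterisation of uniform regularity via sequences of pairwise disjoint open sets (Theorem~\ref{thmURequiv}) and then running a counting argument on the Cantor scheme.

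\medskip

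\textbf{In $(1)\Rightarrow(2)$:} Applying uniform regularity to $U=M\setminus M'$ to get $W\subset\F(F\cup M')+\varepsilon B_{\F(M)}$ with $F$ finite is a nice first move, but the next step --- ``handle the component of $W$ living over $M'$'' --- is exactly where the work lies, and neither of your proposed mechanisms is justified. A Lipschitz retraction $M\to M'$ is not available in general, and the non-canonical approximants in $\F(F\cup M')$ are not known to inherit uniform regularity from $W$ (Grothendieck's criterion goes the wrong way here). The paper avoids this entirely: it changes the base point to a point of \emph{highest} Cantor--Bendixson rank and applies the Kalton decomposition $T=\bigoplus_n T_n:\F(M)\to(\sum_n\F(M_n))_{\ell_1}$. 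The weighted operators $T_n$ are concrete support-shrinking maps, so they preserve uniform regularity (Lemma~\ref{l:URthroughSupportOps}); each annulus $M_n$ has strictly smaller top-derivative data; and the sum map $S$ recovers $W$. This gives the induction without needing any retraction onto $M'$. The passage from compact to general complete scattered $M$ is then handled by the compact-reduction operators from~\cite{ANPP}, which are again support-shrinking, together with Proposition~\ref{p:URStableFreeSubspaces}.
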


In particular, the above theorem shows that for $M=C$, the middle-thirds Cantor set, there exists a uniformly regular set that is not relatively weakly compact.This is notable because $\F(C)$ enjoys the Schur property (being isometric to $\ell_1$, see e.g. \cite{Godard}) and so weak compactness and norm compactness are actually equivalent in this setting. It also yields the following corollary, which appears to be new.

\begin{maincorollary}\label{c:CorollaryC}
    If $M$ is a complete scattered metric space, then $\F(M)$ has property (V*); that is, every V*-set in $\F(M)$ is relatively weakly compact. 
\end{maincorollary}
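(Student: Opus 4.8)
This statement follows by simply combining the two main theorems above, so the plan is to chain their implications in the correct order. First, I would fix an arbitrary base point of $M$ in order to view it as a pointed metric space; since $\F(M)$ (up to isometry), the notion of V*-set, and the notion of relative weak compactness are all unaffected by this choice, no generality is lost. Now let $W\subset\F(M)$ be an arbitrary V*-set (in particular, $W$ is bounded). Since $M$ is complete, Theorem~\ref{thmA} applies and shows that $W$ is uniformly regular. Since $M$ is also scattered, the implication (1)$\Rightarrow$(3) of Theorem~\ref{thmScattered} guarantees that every uniformly regular subset of $\F(M)$ is relatively weakly compact; hence $W$ is relatively weakly compact. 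As $W$ was an arbitrary V*-set, this says precisely that $\F(M)$ has property (V*).

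There is no genuine obstacle here beyond the two results on which it rests: Theorem~\ref{thmA} provides the passage from V*-sets to uniform regularity, and Theorem~\ref{thmScattered} provides the converse passage from uniform regularity to relative weak compactness in the scattered setting. The only issues are bookkeeping ones, namely that property (V*) is by definition the assertion ``every V*-set is relatively weakly compact'', and that, conversely, relatively weakly compact sets are always V*-sets, so that in the scattered case the two classes coincide. Finally, one may observe that, invoking the stronger conclusion (2) of Theorem~\ref{thmScattered} in place of (3), every V*-set --- and a fortiori every relatively weakly compact set --- in $\F(M)$ is in fact relatively norm-compact, so that $\F(M)$ has the Schur property whenever $M$ is a complete scattered metric space.
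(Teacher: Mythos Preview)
Your proof is correct and is exactly the argument the paper uses: combine Theorem~\ref{thmA} (every V*-set is uniformly regular) with the implication $(1)\Rightarrow(3)$ of Theorem~\ref{thmScattered} to conclude. Your closing remark that the stronger conclusion $(1)\Rightarrow(2)$ even yields the Schur property for $\F(M)$ is also correct, though not needed for the corollary itself.
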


\medskip

In the second half of the paper we tackle the problem from a different angle. Recall that an $\R$-tree is an arc-connected metric space $(T, d)$ with the property that there is a unique arc connecting any pair of points $x \neq y \in T$ and it is moreover isometric to the real segment $[0, d(x, y)] \subset \R$. In such a tree $T$, there is a canonical measure $\lambda_T$, called the length measure, which provides a natural way to measure the size of subsets, similar to Lebesgue measure on the real line. Godard proved in \cite{Godard} that $\F(T)$ can be naturally identified with the space $L^1(\lambda_T)$. Now one should keep in mind that relatively weakly compact sets in $L^1(\lambda_T)$ are well understood: they are precisely the uniformly integrable subsets of $L^1(\lambda_T)$. Therefore we will transfer the equi-integrability conditions through the isometry $L^1(\lambda_T) \to \F(T)$ and describe the resulting properties in $\F(T)$. This will provide us some new insights on the properties of weakly compact sets. More precisely, this highlights the importance of representation of elements in $W$ as combinations of elementary molecules satisfying specific properties. We remind the reader that every element $\gamma$ in $\F(M)$ can be written as a series $\gamma = \sum_{i=1}^{\infty} a_i m_{x_i y_i}$ where $(a_i)_i \in \ell_1$ and $m_{x_i y_i} = d(x_i , y_i)^{-1}(\delta(x_i) - \delta(y_i))$ are molecules. If $\|\gamma \| = \sum_{i=1}^{\infty} |a_i|$, such a representation is then referred to as \textit{convex series of molecules}. 

\begin{maintheorem} \label{thmC}
Let $T$ be an $\R$-tree. If $W \subset \F(T)$ then $W$ is relatively weakly compact if and only if it is bounded and satisfies these two properties: 
	\begin{enumerate}[$(a)$,leftmargin=*]
		\item $\forall \ep >0$, $\exists \delta >0$ such that for all $w \in W$ and for every convex series of molecules $\sum_{i=1}^n a_i m_{x_i  y_i} \in \F(T)$ with $\|w - \sum_i a_i m_{x_i y_i}\| \leq \ep$, $\forall I \subset \{1, \ldots , n\}$, $\sum_I d(x_i ,y_i) < \delta$ implies that $\sum_I |a_i| < 2\ep$.
		\item For any $\ep >0$,  there exists a finitely generated subtree $T':=\bigcup_{i=1}^n [0,z_i] \subset T$ such that $W \subset \F(T') + \ep B_{\F(T)}$.
	\end{enumerate}    
\end{maintheorem}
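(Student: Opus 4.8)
The plan is to transport the whole question to $L^1$ via Godard's isometry \cite{Godard} and then apply the Dunford--Pettis characterization of relative weak compactness in $L^1$. Write $\Phi\colon\F(T)\to L^1(\lambda_T)$ for Godard's isometry; it is determined by $\Phi(\delta(x))=\indicator{[0,x]}$ (with $[0,x]$ the arc from the root to $x$, parametrized by arc length), and for every subtree $T'\ni 0$ it restricts to Godard's isometry $\F(T')\to L^1(\lambda_{T'})$, so that $\Phi(\F(T'))$ is exactly the subspace of functions supported on $T'$. We may assume $T$ is complete (replace $T$ by $\overline T$, which is again an $\R$-tree with $\F(\overline T)=\F(T)$; conditions $(a)$ and $(b)$ are unaffected after the routine remark that molecules and finitely generated subtrees of $\overline T$ are approximated by those over $T$). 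Since $\lambda_T$ need not be $\sigma$-finite, the relevant form of Dunford--Pettis (valid over an arbitrary measure space) is: a bounded $V\subset L^1(\mu)$ is relatively weakly compact if and only if it is uniformly integrable and uniformly tight at infinity, the latter meaning that for each $\ep>0$ there is a set $E_\ep$ with $\mu(E_\ep)<\infty$ and $\sup_{f\in V}\int_{\Omega\setminus E_\ep}|f|\,d\mu\le\ep$. The heart of the proof is to match $(a)$ with uniform integrability of $\Phi(W)$ and $(b)$ with uniform tightness at infinity.

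For the dictionary I will use two facts about $\lambda_T$, both flowing from its description as the outer measure assigning the value $d(p,q)$ to each arc $[p,q]$: every Borel set of finite $\lambda_T$-measure lies, up to arbitrarily small $\lambda_T$-measure, in some finitely generated subtree $\bigcup_{i=1}^k[0,z_i]$ (cover it efficiently by countably many arcs and keep finitely many of them); and the simple functions subordinate to finite partitions of $T$ into ``radial'' arcs $[z,x]$, $z\in[0,x]$, are dense in $L^1(\lambda_T)$. The second dovetails with the molecular language: $\sum_i c_i\indicator{[z_i,x_i]}$ equals $\Phi\big(\sum_i c_i d(x_i,z_i)\,m_{x_iz_i}\big)$, and since the arcs are disjoint this is the image of a \emph{convex} series of molecules. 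Conversely, if $\gamma=\sum_{i=1}^n a_im_{x_iy_i}$ is any convex series then $\psi:=\Phi(\gamma)=\sum_i c_ih_i$, with $c_i=a_i/d(x_i,y_i)$ and $h_i=\indicator{[0,x_i]}-\indicator{[0,y_i]}$ (which satisfies $|h_i|=\indicator{[x_i,y_i]}$ and $\norm{h_i}_1=d(x_i,y_i)$); comparing $\norm{\psi}_1=\norm{\gamma}=\sum_i|a_i|=\int\sum_i|c_ih_i|$ with the pointwise bound $|\psi|\le\sum_i|c_ih_i|$ forces $|\psi|=\sum_i|c_ih_i|$ almost everywhere. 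In particular, for any $I\subset\{1,\dots,n\}$ one gets $\sum_{i\in I}|a_i|=\int\sum_{i\in I}|c_ih_i|\le\int_{A_I}|\psi|$ where $A_I:=\bigcup_{i\in I}[x_i,y_i]$ has $\lambda_T(A_I)\le\sum_{i\in I}d(x_i,y_i)$.

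Granting this, the equivalences go through as follows. For ``uniform integrability of $\Phi(W)$ $\Rightarrow$ $(a)$'': given $\ep$, take the corresponding $\delta$; if $w\in W$, $\gamma=\sum_i a_im_{x_iy_i}$ is a convex series with $\norm{w-\gamma}\le\ep$ and $\sum_{i\in I}d(x_i,y_i)<\delta$, then $\sum_{i\in I}|a_i|\le\int_{A_I}|\psi|\le\int_{A_I}|\Phi(w)|+\norm{\Phi(w)-\psi}_1$, which is $<2\ep$ for a suitable choice of $\delta$. For ``$(a)$ $\Rightarrow$ uniform integrability'': a failure yields, for some $\ep_0>0$ and every $\delta$, a point $w\in W$ and a Borel set of $\lambda_T$-measure $<\delta$ carrying at least $\ep_0$ of $\int|\Phi(w)|$; by inner regularity of the finite measure $|\Phi(w)|\lambda_T$ this set may be taken to be a finite union of radial arcs, and approximating $\Phi(w)$ by a radial-arc simple function refining the partition $\{A,\,T\setminus A\}$ produces a convex series of molecules that violates $(a)$. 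For $(b)$: since $\Phi(\F(T'))=L^1(\lambda_{T'})$ and the distance from $g$ to it is $\int_{T\setminus T'}|g|$, condition $(b)$ says precisely that for each $\ep$ some finitely generated subtree $T'$ (necessarily of finite $\lambda_T$-measure) satisfies $\sup_{g\in\Phi(W)}\int_{T\setminus T'}|g|\le\ep$; this obviously gives uniform tightness at infinity, and for the converse one combines a finite-measure witness with the exhaustion fact above and, once $W$ is known relatively weakly compact, with the uniform integrability of $\Phi(W)$.

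Chaining: if $W$ is relatively weakly compact then so is $\Phi(W)$, hence it is bounded, uniformly integrable and uniformly tight at infinity by Dunford--Pettis, so $W$ is bounded and satisfies $(a)$ and $(b)$; conversely, bounded $+\,(a)\,+\,(b)$ forces $\Phi(W)$ to be bounded, uniformly integrable and uniformly tight at infinity, so $\Phi(W)$ — and therefore $W$ — is relatively weakly compact. I expect the genuine difficulty to lie not in this scheme but in the measure theory underpinning the two facts about $\lambda_T$ — in particular when $T$ is non-separable and $\lambda_T$ is not $\sigma$-finite — together with pinning down that the ``convex series'' normalization is exactly the pointwise identity $|\Phi(\gamma)|=\sum_i|c_ih_i|$, which is what makes the constant $2\ep$ in $(a)$ the correct one.
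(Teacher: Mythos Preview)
Your approach is correct and follows the same overall strategy as the paper: transport everything to $L^1(\lambda_T)$ via Godard's isometry and invoke Dunford--Pettis. The execution, however, differs in a couple of places worth recording. For ``uniform integrability $\Rightarrow$ (a)'' you derive the pointwise identity $|\Phi(\gamma)|=\sum_i|c_ih_i|$ a.e.\ directly from the norm equality $\|\gamma\|=\sum_i|a_i|$, which immediately gives $\sum_{i\in I}|a_i|\le\int_{A_I}|\Phi(\gamma)|$; the paper instead establishes the analogous inequality (Lemma~\ref{l:SumOfCoefficients}) by invoking the cyclic-monotonicity characterization of convex sums from \cite{RRZ}, and your route is the cleaner of the two. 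For ``(a) $\Rightarrow$ uniform integrability'' the paper goes through a fairly technical subdivision lemma (Lemma~\ref{lem:Char(a)_helper}) tracking how the $\delta$--$\varepsilon$ relation degrades when molecules are refined; your plan---approximate $\Phi(w)$ by a simple function on disjoint radial arcs refining a bad set $A$, so that the resulting convex sum of molecules violates (a)---sidesteps this entirely, at the cost of the measure-theoretic approximation facts about $\lambda_T$ that you correctly flag as needing care. Finally, your observation that ``uniform tightness $\Rightarrow$ (b)'' requires uniform integrability is on point: the paper's Proposition~\ref{Charac(b)} asserts the equivalence for all bounded $W$, but the step ``we may assume $B$ is a finite union of segments'' in its proof tacitly uses absolute continuity of $\int_E|Gw|$ uniformly over $w\in W$; in the context of Theorem~\ref{thmC} this is harmless (both UI and UT are in play), and your chaining handles it correctly.
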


Informally, property (a) says that the weight of molecules formed by pairs of points in close proximity should be controlled when approximating elements of a weakly compact set by convex series of molecules. Our next result shows that this is somehow necessary in general.

\begin{maintheorem} \label{thmD}
	Let $M$ be a complete pointed metric space. For every $n \in \N$, let  $\gamma_n=\sum_k \lambda_k^n m_{x_k^n y_k^n} \in S_{\F(M)}$ be such that $\sum_k d(x_k^n,y_k^n)\to 0$ and $\sup_n\sum_k \lambda_k^n<\infty$.
	Then $(\gamma_n)_{n}$ has a subsequence which is an (asymptotically isometric) $\ell_1$-basis.
\end{maintheorem}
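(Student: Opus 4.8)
The upper estimate $\bigl\|\sum_j a_j\gamma_{n_j}\bigr\|\le\sum_j|a_j|$ is automatic because the $\gamma_n$ are unit vectors, so everything reduces to the lower estimate. For this it is enough, by a now standard argument, to produce a subsequence $(\gamma_{n_j})_j$ and a null sequence $(\ep_j)_j\subset(0,1)$ with the following property: for \emph{every} sign sequence $\theta=(\theta_j)_j\in\{-1,1\}^{\N}$ there exists $f\in B_{\Lip_0(M)}$ with $\theta_j\langle f,\gamma_{n_j}\rangle\ge 1-\ep_j$ for all $j$. Indeed, given finitely supported scalars $(a_j)_j$ and taking $\theta_j=\operatorname{sign}(a_j)$, the corresponding $f$ witnesses $\bigl\|\sum_j a_j\gamma_{n_j}\bigr\|\ge\langle f,\sum_j a_j\gamma_{n_j}\rangle=\sum_j|a_j|\,\theta_j\langle f,\gamma_{n_j}\rangle\ge\sum_j(1-\ep_j)|a_j|$, which is exactly the assertion (and a Hahn--Banach argument shows this sufficient condition is also necessary).

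I would begin by putting the $\gamma_n$ in normal form. Since a summable perturbation of the $\gamma_n$ reduces the statement to an equivalent one, I may assume, after truncating each series, that each $\gamma_n=\sum_{k=1}^{N_n}\lambda_k^n m_{x_k^n y_k^n}$ is a \emph{finite} convex combination of molecules with $\sum_k|\lambda_k^n|=1$, $\|\gamma_n\|\ge 1-2^{-n}$, and $\ell_n:=\sum_{k}d(x_k^n,y_k^n)\to 0$; flipping the endpoints I may also assume every $\lambda_k^n>0$. For each $n$ fix a $1$-Lipschitz $h_n$ with $\langle h_n,\gamma_n\rangle>1-2^{-n}$, which forces $h_n$ to increase at almost full rate along almost all (in weight) of the pairs $(y_k^n,x_k^n)$. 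Now extract, inductively, a subsequence $(n_j)_j$ such that $\ell_{n_{j+1}}$ is \emph{extremely} small compared with all the data already fixed for $\gamma_{n_1},\dots,\gamma_{n_j}$: small with respect to the target errors $\ep_1,\dots,\ep_j$ (say $\ep_j=2^{-j}$), to the weights $\Lambda_i:=\sum_k\lambda_k^{n_i}/d(x_k^{n_i},y_k^{n_i})$ of the earlier terms, and to a resolution scale measuring how spread the finite sets $\bigcup_{i\le j}\{x_k^{n_i},y_k^{n_i}\}\cup\{0_M\}$ are. This is possible precisely because $\ell_n\to 0$.

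Fix a sign sequence $\theta$; the task is to build $f=\lim_J f_J\in B_{\Lip_0(M)}$ (up to an irrelevant additive constant) with $\theta_j\langle f,\gamma_{n_j}\rangle\ge 1-2\ep_j$ for all $j$. The idea is a ``finest scale wins'' patching: having a $1$-Lipschitz $f_J$ that realises $\theta_i\gamma_{n_i}$ up to error $\ep_i$ for $i\le J$, obtain $f_{J+1}$ by overwriting $f_J$ \emph{only inside neighbourhoods of radius of order $\ell_{n_{J+1}}$ of the molecules of $\gamma_{n_{J+1}}$}, installing there local profiles (extracted from $h_{n_{J+1}}$) that make the increments of $f$ across the pairs of $\gamma_{n_{J+1}}$ have sign $\theta_{J+1}$ and almost full size. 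The scale gap guarantees two things. First, since $f_{J+1}-f_J$ has sup-norm $O(\ell_{n_{J+1}})$ and is supported near the tiny molecules of $\gamma_{n_{J+1}}$, it changes $\langle\cdot,\gamma_{n_i}\rangle$ for $i\le J$ by at most $2\ell_{n_{J+1}}\Lambda_i\ll\ep_i$, so previously achieved estimates persist, and summing these negligible errors over all later stages keeps the final estimate within $2\ep_i$. Second, the overwrite can be carried out without increasing the Lipschitz constant: because $\sum_k d(x_k^{n_{J+1}},y_k^{n_{J+1}})$ is small, the molecules of $\gamma_{n_{J+1}}$ group into clusters of diameter $O(\ell_{n_{J+1}})$ (connected components for the relation ``the pairs lie within $d(x_k,y_k)+d(x_{k'},y_{k'})$ of one another'', whose diameters are controlled by sums of molecule-lengths, hence by $\ell_{n_{J+1}}$), on each of which $f_J$ oscillates only by $O(\ell_{n_{J+1}})$ — leaving exactly the room needed to install increments of size $\le d(x_k^{n_{J+1}},y_k^{n_{J+1}})$ — and, the cluster-neighbourhoods being at a scale far below the resolution scale of the coarser structure, they neither meet one another nor disturb $f_J$ elsewhere. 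Passing to the limit gives $f$.

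The step I expect to be the real obstacle is precisely the $1$-Lipschitz local surgery of the previous paragraph: unlike in the $\R$-tree model (where, via Godard's isometry and Theorem~\ref{thmC}, everything reduces to a transparent computation with nested ``thin spikes'' in $L^1$), on a general metric space one must overwrite $f_J$ inside finitely many small balls while simultaneously prescribing increments of full size across pairs inside them, and $f_J$ may already be nearly extremal there; the clustering and the rapid scale separation are what reconcile these demands. Organising this metric bookkeeping cleanly — including the verification that the clusters can be chosen with diameter $O(\ell_{n_{J+1}})$ and that the overwrites at successive stages never interfere — is where the bulk of the work lies, and the hypothesis $\sum_k d(x_k^n,y_k^n)\to 0$ enters in an essential, quantitative way at every turn.
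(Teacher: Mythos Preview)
Your plan has the right flavor but the surgical step contains a genuine obstruction, and the paper takes a different route precisely to avoid it.

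Consider $M=[0,1]$, $\gamma_{n_1}=m_{0,1}$, and $\gamma_{n_2}=m_{1/2-\ell,\,1/2+\ell}$ for a tiny $\ell>0$. For the sign pattern $\theta=(+1,-1)$ your first step produces $f_1(x)=-x$, which norms $\gamma_{n_1}$ exactly. You then propose to overwrite $f_1$ inside a neighbourhood of radius $O(\ell)$ around the single cluster $\{1/2\pm\ell\}$ so that the new function increases by $\approx 2\ell$ across $[1/2-\ell,1/2+\ell]$. But $f_1$ already decreases at slope exactly $1$ everywhere: if $f_2$ agrees with $f_1$ at the boundary of any interval $[1/2-C\ell,1/2+C\ell]$, then $f_2$ must lose $2C\ell$ across it, so forcing a gain of $2\ell$ over the middle subinterval requires slope $(C+1)/(C-1)>1$ on the remainder. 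The overwrite cannot be carried out $1$-Lipschitzly inside an $O(\ell)$-window, no matter how large the constant. Your heuristic ``$f_J$ oscillates only by $O(\ell)$ on each cluster, leaving room to install increments'' is the mistake: small oscillation does not mean slack---$f_J$ may already be extremal there in the wrong direction, and then there is no room at all. Of course $\|\gamma_{n_1}-\gamma_{n_2}\|=2-4\ell<2$, so the theorem is not contradicted; but your construction, which fixes the errors $\ep_j$ in advance and only then shrinks $\ell_{n_{j+1}}$, cannot absorb this loss into the later $\ep_{J+1}$ as written.

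The paper sidesteps the issue by not building a single simultaneous norming functional. Instead it proves the two-term estimate $\lim_n\|\gamma+\gamma_n\|=\|\gamma\|+1$ for every fixed $\gamma$, and only then iterates to extract the asymptotically isometric $\ell_1$-subsequence. The two-term estimate is obtained via two lemmas: one (Lemma~\ref{lem:conv_comb_1}) handles the molecules of $\gamma_n$ that fall inside small disjoint balls around the points of $\supp(\gamma)$ by shifting a norming functional independently on each ball, at the cost of Lipschitz constant $(D+2r)/(D-2r)$; the other (Lemma~\ref{lem:conv_comb}) handles molecules that stay at distance $\ge C>\delta$ from $\supp(\gamma)$ by passing to a perturbed metric $b=d+\delta\cdot\indicator{A\times B}$ and checking cyclic monotonicity there, yielding a factor $C/(C+\delta)$. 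A pigeonhole on concentric annuli finds a gap with negligible weight, splitting the molecules of $\gamma_n$ into these two regimes. In your example the cluster at $1/2$ is far from $\supp(\gamma_{n_1})=\{0,1\}$, so it lands in the ``far'' regime and Lemma~\ref{lem:conv_comb} applies directly---no local surgery on a saturated function is attempted.
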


Clearly, if a set $W$ contains a sequence $(\gamma_n)$ as above, then $W$ is not relatively weakly compact.

\section{Preliminaries}

\subsection{Notation}

The letter $X$ is reserved for a real Banach space. As usual, $X^*$ denotes the topological dual of $X$, while $B_X$ and $S_X$ stand for the unit ball and the unit sphere of $X$, respectively. If $S \subset X$ then $\conv(S)$ is the convex hull of~$S$. 
Ordinarily, $L^1 = L^1([0,1])$ represents the Banach space of Lebesgue integrable functions from $[0,1]$ to $\R$. 
Lebesgue measure in $\R$ is denoted by $\lambda$. Similarly, $\ell_1 = \ell_1(\N)$ is the space of absolutely summable sequences in $\R$ and $c_0$ is the space of all real sequences which converge to 0.
To indicate that $X$ is isometrically isomorphic to another Banach space $Y$, we use the notation $X \equiv Y$.

Throughout the paper, $(M,d)$ is a complete and pointed metric space, with distinguished point denoted by $0$.
If $p \in M$ and $r\geq 0$, then we will use the notation:
\begin{align*}
	B(p,r) &=  \{x \in M \; | \; d(x,p) \leq r \}\\
	B^O(p,r) &=  \{x \in M \; | \; d(x,p) < r \}
\end{align*}
Moreover, for $S \subset M$ we write $S^c:=M\setminus S$ and 
\begin{align*}
 [S]_r &= \set{x\in M:d(x,S)\leq r}\\
	\widetilde{S} &= \{(x,y) \in S \times S \; : \; x \neq y\}.
\end{align*}

If $M,N$ are pointed metric spaces, with both base points denoted by $0$, then $\Lip_0(M,N)$ stands for the set of all Lipschitz functions $f :M \to N$ such that $f(0) = 0$. As usual, $\Lip(f)$ refers to the best Lipschitz constant of $f$. If $N=X$ is a real Banach space, then $\Lip_0(M,X)$ naturally becomes a Banach space when equipped with the Lipschitz norm:
$$ \forall f \in \Lip_0(M,X), \quad \|f\|_L := \Lip(f) =\sup_{x \neq y \in M} \frac{\|f(x)-f(y)\|_X}{d(x,y)}.$$
In the case $X = \R$, it is customary to write $\Lip_0(M):=\Lip_0(M,\R)$.

\subsection{Lipschitz free spaces}
\label{section:LipFree}

For $x\in M$, we let $\delta(x) \in \Lip_0(M)^*$ be the evaluation functional defined by $\langle\delta(x) , f \rangle  = f(x), \ \forall f\in \Lip_0(M)$. It is readily seen that $\delta : x \in M \mapsto \delta(x) \in \Lip_0(M)^*$ is an isometry. The \textit{Lipschitz free space over $M$} is then defined as the norm-closure of the space generated by such evaluation functionals:
$$\F(M) := \overline{ \mbox{span}}^{\| \cdot  \|}\left \{ \delta(x) \, : \, x \in M  \right \} \subset \Lip_0(M)^*.$$
It is well known that $\F(M)$ is actually a predual of $\Lip_0(M)$, that is $\Lip_0(M) \equiv \F(M)^*$. 
For an in-depth understanding of Lipschitz free spaces, interested readers are referred to \cite{Weaver2} (where they are called \textit{Arens-Eells spaces}).

Let us recall some important features of Lipschitz free spaces. 
Firstly, note that, on bounded subsets of $\Lip_0(M)$, the weak$^*$ topology induced by $\F(M)$ coincides with the pointwise topology.
If $N \subset M$ is closed, then $\F(N\cup\set{0}) \equiv \F_M(N)$ where: 
$$\F_M(N) := \overline{\text{span}} \{ \delta_M(x) \; | \; x \in N\} \subset \F(M).$$
Under this identification, the \textit{support of $\gamma \in \F(M)$} is the smallest closed subset $N \subset M$ such that $\gamma \in \F_M(N\cup\set{0})$. 
It is denoted by $\text{supp}(\gamma)$. The set of finitely supported elements of $\F(M)$ is precisely $\mathrm{span}\,\delta(M)$, so it is dense in $\F(M)$. For further details, we refer to \cite{AP20, APPP}.

Next, let $ f : M \to N $ be a Lipschitz map preserving the basepoint. 
By the universal linearization property of Lipschitz-free spaces, there exists a unique bounded linear operator 
\[
\widehat{f} : \F(M) \to \F(N)
\]
satisfying $\widehat{f} \circ \delta_M = \delta_N \circ f$. 
Moreover, $\|\widehat{f}\| = \Lip(f)$.

If $x \neq y  \in M$, then the \textit{elementary molecule} $m_{xy} \in S_{\F(M)}$ is defined by:
$$ m_{x y} := \frac{\delta(x) - \delta(y)}{d(x,y)}.$$
It is well-known and easy to prove that unit ball of $\F(M)$ is the closed convex hull of the set of all elementary molecules. Moreover if $Q:\ell_1(\widetilde{M})\to \mathcal F(M)$ denotes the linear map such that 
$$ Q(e_{(x,y)}) = m_{xy},$$
then $Q$ extends to an onto norm-one mapping which satisfies 
$$(\star) \qquad \forall \mu \in \lipfree{M}, \quad \norm{\mu}=\inf\set{\norm{a}_1: Qa=\mu}.$$ In other words, $Q$ is a quotient map (see e.g. Lemma 3.3 in \cite{APPP}). Is it important to note that the infimum in $(\star)$ is not always attained. However, when it is, that is if there is $a = (a_{(x,y)})_{x,y \in \widetilde{M}} \in \ell_1(\widetilde{M})$ such that $\mu = Q(a) = \sum_{x,y \in \widetilde{M}} a_{(x,y)} m_{xy}$ and $\norm{\mu}=\norm{a}_1$, we refer to $\mu$ as a \emph{convex series of molecules}  (see \cite{APS24,RRZ}). Notably, every finitely supported element in $\F(M)$ is a convex series (in fact, a convex sum) of molecules.  

We briefly recall the so-called multiplication operators and their preadjoints, the weighted Lipschitz operators. Following \cite[Lemma~2.3]{APPP}, let $h : M \to \R$ be a Lipschitz function with bounded support, and let $K \subset M$ contain both the base point and the support of $h$. For $f \in \Lip_0(K)$, define
\begin{equation}
\label{eq:T_h}
\mathsf{M}_h(f)(x)=\begin{cases}
f(x)h(x) & \text{if } x\in K, \\
0 & \text{if } x\notin K.
\end{cases}
\end{equation}
Then $\mathsf{M}_h$ is a weak$^*$-to-weak$^*$ continuous operator from $\Lip_0(K)$ into $\Lip_0(M)$, called a \emph{multiplication operator}. Its preadjoint $T_h : \F(M) \to \F(K)$, given by $T_h(\delta(x)) = h(x)\,\delta(f(x))$, is referred to as a \emph{weighted Lipschitz operator}. Precise conditions on $h$ and $f$ ensuring boundedness of $\mathsf{M}_h$ and $T_h$ are characterized in \cite{ACP24}.

\subsection{Some properties related to weak compactness} \label{section:Defs}

A sequence $(x_n)_n$ in a Banach space $X$ is \textit{weakly Cauchy} if for every $x^* \in X^*$, the sequence $(\duality{x^*,x_n})_n$ converges.
A subset $W$ of a Banach space $X$ is called \textit{weakly precompact} if every sequence $(x_n)_n \subset W$ admits a weakly Cauchy subsequence.
Equivalently, by virtue of Rosenthal's $\ell_1$-theorem, $W$ is weakly precompact if it is bounded and no sequence in $W$ is equivalent to the unit vector basis of $\ell_1$. Now a Banach space:
\begin{itemize}[leftmargin=*]
	\item has the \textit{Schur property} if every weakly null sequence is actually norm-null;
	\item is \textit{weakly sequentially complete} (wsc) if every weakly Cauchy sequence is weakly convergent.
\end{itemize}
Of course, every Schur space is weakly sequentially complete. It is also noteworthy that if $X$ has the Schur property then weak and norm compactness coincide. Also, if $X$ is weakly sequentially complete then weak precompactness and relative weak compactness coincide. 

Next, a series $\sum x_n$ in a Banach space is \textit{weakly unconditionally Cauchy} (WUC) if $\sum_{n=1}^{\infty} |x^*(x_n)| < \infty$ for every $x^* \in X^*$. Now a subset $W \subset X$ is a \emph{V*-set} if for every WUC series $\sum x_n^*$ in $X^*$ one has
\[
\lim_n \sup_{w\in W} \abs{\duality{x_n^*,w}}=0.
\]
Observe that every weakly precompact set is a V*-set, but not conversely.  In fact, a Banach space
\begin{itemize}[leftmargin=*]
	\item has \textit{property (V*)} if all V*-sets are relatively weakly compact (in that case, they agree with the weakly precompact sets too).
\end{itemize}
In particular, note that if $X$ has property (V*), then $X$ is weakly sequentially complete. The properties we just mentioned are summarized in the following diagram in terms of implications for closed sets:
\begin{center}
	\begin{tikzpicture}
		\node (C) at (0,0) {compact};
		\node (WC) at (3,0) {weakly compact};
		\node (WPC) at (7,0) {weakly precompact};
		\node (V) at (10,0) {V*-set};
		\draw[-implies,double equal sign distance] (C) -- (WC);
		\draw[-implies,double equal sign distance] (WC) -- (WPC);		
		\draw[-implies,double equal sign distance] (WPC) -- (V);
		\draw[-implies,double equal sign distance] (WPC) to [bend left] node [above]{wsc} (WC);
		\draw[-implies,double equal sign distance] (V) to [bend left] node [midway,below]{property (V*)} (WC);
		\draw[-implies,double equal sign distance] (WPC) to [bend right] node [midway,above]{Schur property} (C);
	\end{tikzpicture}
\end{center}

\subsection{A brief overview in the framework of Lipschitz free spaces}

Let us start with a remark with respect to norm compact sets in Lipschitz free spaces. It is a folklore fact that a set $K$ is relatively compact in a Banach space $X$ if and only if it is bounded and can be approximated by finite dimensional subspaces in the following sense: for every $\ep > 0$, there exists a finite dimensional subspace $F \subset X$ such that  $K \subset F + \ep B_X$. In terms of Lipschitz free spaces, we obtain the next, similar statement:

\begin{proposition} \label{norm-compactness}
Let $M$ be any pointed metric space and let $K \subset \F(M)$ be bounded. Then $K$ is relatively compact if and only if for every $\ep > 0$, there exists a finite subset $F \subset M$ such that $K \subset \F(F) + \ep B_{\F(M)}$.   
\end{proposition}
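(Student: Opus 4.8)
The plan is to prove both implications, with the non-trivial direction being that relative compactness forces approximation by finitely supported elements (equivalently, by free spaces over finite subsets).

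\textbf{The easy direction.} Suppose that for every $\ep>0$ there is a finite set $F\subset M$ with $K\subset\F(F)+\ep B_{\F(M)}$. Since $F$ is finite, $\F(F)=\F(F\cup\{0\})$ is finite-dimensional (it is linearly spanned by $\{\delta(x):x\in F\}$), so this is exactly the classical statement that $K$ is approximable by finite-dimensional subspaces; together with boundedness of $K$, this gives relative compactness by the folklore fact recalled just before the proposition. I would simply invoke that fact.

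\textbf{The main direction.} Assume $K$ is relatively compact. Fix $\ep>0$. By compactness of $\cl K$, pick a finite $\ep/2$-net $\gamma_1,\dots,\gamma_m\in\F(M)$ for $K$. Since finitely supported elements are dense in $\F(M)$ (as recalled in Section~\ref{section:LipFree}), for each $j$ choose a finitely supported $\sigma_j\in\F(M)$ with $\|\gamma_j-\sigma_j\|<\ep/2$. Let $F:=\{0\}\cup\bigcup_{j=1}^m\supp(\sigma_j)$, a finite subset of $M$. Then each $\sigma_j\in\F(F)$, and for any $w\in K$ there is $j$ with $\|w-\gamma_j\|\le\ep/2$, hence $\|w-\sigma_j\|<\ep$, i.e. $w\in\F(F)+\ep B_{\F(M)}$. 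Thus $K\subset\F(F)+\ep B_{\F(M)}$, as desired.

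\textbf{Anticipated obstacle.} Honestly there is no serious obstacle here; the statement is essentially a translation of the standard ``approximation by finite-dimensional subspaces'' characterization of relative compactness into the language of free spaces, using two free-space-specific facts already established in the preliminaries: that $\F(F)$ is finite-dimensional when $F$ is finite, and that finitely supported elements are dense. The only point requiring a modicum of care is making sure the finite set $F$ can be chosen uniformly over all of $K$ (not just over a single element), which is handled by first passing to a finite net and then taking the union of the (finitely many) supports. I would write the argument in the order above: state the folklore characterization, do the trivial implication, then build the finite net and take the union of supports for the converse.
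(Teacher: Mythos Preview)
Your proposal is correct and matches the paper's approach: the paper does not give an explicit proof but simply presents the proposition as an immediate translation of the folklore finite-dimensional approximation criterion, which is exactly what you carry out. Your write-up of the non-trivial direction (finite $\ep/2$-net, approximate by finitely supported elements, take the union of supports) is the standard argument and there is nothing to add.
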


This proposition should be compared to \cite[Proposition~3.6]{ANPP} which implies that if $W$ is relatively weakly compact in $\F(M)$, then $W$ has \textit{Kalton's property}. That is, for every $\ep >0$ and every $\delta >0$, there exists a finite subset $F \subset M$  such that
$$W \subset \F([F]_{\delta}) + \ep B_{\F(M)}.$$ 
We emphasize that it is proved in \cite{ANPP} that $W \subset \F(M)$ has Kalton's property if and only if $W$ is tight, meaning that 
$$ \forall \ep >0, \; \exists K \subset M \text{ compact,} \quad W \subset \F(K) + \ep B_{\F(M)}. $$
It is also shown in the same paper that, for a complete metric space $M$, every weakly precompact set $W \subset \F(M)$ is tight. This last result has recently been slightly improved in \cite{APQ} since the authors prove that V*-sets are also tight.
As we already mentioned above, tightness is quite far from characterizing relative weak precompactness in free spaces. Indeed, it is enough to consider any infinite and compact metric space $M$ while setting $W = B_{\F(M)}$.

We conclude the section with some examples of complete metric spaces having at least one property which we introduced in the previous subsection. It is known \cite{AGPP21} that $\F(M)$ has the Schur property if and only if $M$ is purely 1-unrectifiable, which means that $M$ does not contain any bi-Lipschitz image of a subset $S$ of $\R$ with positive Lebesgue measure. Classical examples of such metric spaces are the middle-third Cantor set, the Koch curve, or even any $p$-snowflake metric space $(M,d^p)$ where $p \in (0,1)$. 
Next, it is proved in \cite{CDW16} that $\F(M)$ is weakly sequentially complete for any $M \subset \R^n$. 
This result is improved in \cite{KP18} since the authors show that $\F(M)$ has property (V*) for any compact subset $M$ of a superreflexive Banach space. Thanks to the compact reduction principle in  \cite{ANPP}, $\F(X)$ is weakly sequentially complete for any superreflexive Banach space.  Finally, the recent preprint \cite{APQ} contains many interesting results concerning property (V*) in Lipschitz free spaces. Notably, it is proved that $\F(M)$ has property (V*) when the complete metric space $M$ is locally compact and purely 1-unrectifiable, an $\ell_p$ space for $1<p<\infty$, or a Carnot group.

\section{Basic properties of uniformly regular sets}

For convenience of the reader, we recall the following definitions from the introduction.

\begin{definition}
	A bounded set $W \subset \F(M)$ is:
    \begin{itemize}
        \item \textit{uniformly regular} if for every $\ep >0$ and every open set $U \subset M$, there exists a compact subset $K \subset U$ such that $W \subset \F(K \cup U^c) + \ep B_{\F(M)}$.
        \item \textit{tight} if for every $\ep >0$, there exists a compact subset $K \subset M$ such that $W \subset \F(K) + \ep B_{\F(M)}$.
    \end{itemize}
\end{definition}

It is clear from the definition that uniform regularity implies tightness (simply take $U = M$ above), but the converse does not hold in general. Indeed, notice that if $M$ is compact, then any bounded subset of $\F(M)$ is tight but not necessarily uniformly regular:

\begin{remark}
\label{ex:ball_not_UR}
 If $M$ has at least one accumulation point $z$, then $W:=B_{\F(M)}$ is not uniformly regular. To see this, set $U:=M \setminus \set{z}$ and $\ep=\frac12$.
Let $K \subset U$ be compact.
Since $z$ is an accumulation point, we may find $x \in M\setminus (K\cup \set{z})$ such that $\frac{d(x,z)}{d(x,K)}\leq 1$. 
Consider a 1-Lipschitz map $f$ such that $f \restricted_{K \cup \{z\}} = 0$ and $\duality{f,m_{xz}}=1$.
 Then, for every $\gamma \in \F(K \cup \{z\})$
	$$ \| m_{xz} - \gamma \|_{\F(M)} \geq \< f , m_{xz} - \gamma  \> =  \<f , m_{xz} \> = 1 > \frac{1}{2}.$$
	This proves that $m_{xz} \not\in  \F(K \cup \{z\}) + \frac{1}{2} B_{\F(M)}$.
 Hence $W$ is not uniformly regular as $m_{xz} \in W$.

In contrast, if $M$ has no accumulation point, then compact subsets of $M$ are actually finite and therefore tightness, uniform regularity and (weak) compactness are all equivalent in this case by virtue of Proposition~\ref{norm-compactness}.
\end{remark}

We now continue with a few basic observations concerning uniformly regular sets, which will be useful in the sequel. These statements can be checked directly from the definition.

\begin{proposition}\label{p:URbasic}
    \hfill
    \begin{enumerate}
        \item If $W \subset \F(M)$ is uniformly regular and $S \subset W$ then $S$ is uniformly regular.
        \item If $W_1$ and $W_2$ are uniformly regular in $\F(M)$, then $W_1+W_2$ is uniformly regular. 
        \item If $W \subset \F(M)$ is uniformly regular then the convex hull $\conv(W)$ is uniformly regular.
        \item ``Grothendieck's criterion'': A set $W \subset \F(M)$ is uniformly regular if and only if for every $\ep >0$, there exists a uniformly regular set $W_\ep \subset \F(M)$ such that $W \subset W_\ep + \ep B_{\F(M)}$.
    \end{enumerate}
\end{proposition}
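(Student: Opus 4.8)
\textbf{Proof proposal for Proposition~\ref{p:URbasic}.}

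The plan is to verify all four items directly from the definition of uniform regularity, keeping track of how the approximation errors and the compact sets interact. Throughout, fix an open set $U \subset M$ and $\ep > 0$; in each case I want to produce a compact $K \subset U$ with the required inclusion.

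For (1), if $S \subset W$ and $W$ is uniformly regular, then given $U$ and $\ep$ choose $K \subset U$ compact with $W \subset \F(K \cup U^c) + \ep B_{\F(M)}$; since $S \subset W$ the same inclusion holds for $S$. For (2), given $U$ and $\ep$, apply uniform regularity of $W_1$ and of $W_2$ with parameter $\ep/2$ to obtain compact sets $K_1, K_2 \subset U$ with $W_j \subset \F(K_j \cup U^c) + \tfrac{\ep}{2} B_{\F(M)}$; set $K := K_1 \cup K_2$, which is compact and contained in $U$. Since $\F(K_1 \cup U^c) + \F(K_2 \cup U^c) \subset \F(K \cup U^c)$ (both summands live in the larger free subspace), adding the two approximations gives $W_1 + W_2 \subset \F(K \cup U^c) + \ep B_{\F(M)}$. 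For (3), note that if $w_1, \dots, w_k \in W$ and $t_1, \dots, t_k \geq 0$ with $\sum_j t_j = 1$, and each $w_j = \gamma_j + \rho_j$ with $\gamma_j \in \F(K \cup U^c)$ and $\|\rho_j\| \leq \ep$ (using a single compact $K$ coming from uniform regularity of $W$ applied to $U, \ep$), then $\sum_j t_j w_j = \sum_j t_j \gamma_j + \sum_j t_j \rho_j$, where the first term lies in $\F(K \cup U^c)$ (a linear subspace) and the second has norm at most $\ep$; hence $\conv(W) \subset \F(K \cup U^c) + \ep B_{\F(M)}$. Taking closures is harmless since $\F(K \cup U^c) + \ep B_{\F(M)}$ is already closed, or one absorbs a further $\ep$; in any event $\conv(W)$ — and its closure — is uniformly regular. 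Item (4) is the genuine two-$\ep$ argument mimicking Grothendieck's criterion: one direction is immediate (take $W_\ep = W$). For the converse, given $U$ and $\ep$, pick a uniformly regular $W_{\ep/2}$ with $W \subset W_{\ep/2} + \tfrac{\ep}{2} B_{\F(M)}$, then apply uniform regularity of $W_{\ep/2}$ to $U$ with parameter $\ep/2$ to get compact $K \subset U$ with $W_{\ep/2} \subset \F(K \cup U^c) + \tfrac{\ep}{2} B_{\F(M)}$; chaining the two inclusions yields $W \subset \F(K \cup U^c) + \ep B_{\F(M)}$.

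None of these steps presents a serious obstacle; the proposition is essentially bookkeeping. The only point requiring a moment's care is the observation, used in (2) and implicitly in (3)–(4), that $\F_M(A) + \F_M(B) \subset \F_M(A \cup B)$ for closed sets $A, B \subset M$ containing the basepoint — this is clear since $\delta_M(x) \in \F_M(A \cup B)$ for every $x \in A \cup B$ and $\F_M(A \cup B)$ is a closed linear subspace — together with the fact that $\F(K \cup U^c) + \ep B_{\F(M)}$ is norm-closed (a closed subspace plus a compact-in-weak$^*$, norm-closed ball), so that no extra $\ep$ is lost when passing to closures. I would simply remark that all four assertions follow by unwinding the definitions, spelling out only the two-$\ep$ chaining in (4).
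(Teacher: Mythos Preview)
Your proposal is correct and matches the paper's approach exactly: the paper does not give a proof, stating only that ``these statements can be checked directly from the definition,'' and your argument is precisely that direct check. One tiny slip worth fixing: the parenthetical ``compact-in-weak$^*$'' for $\ep B_{\F(M)}$ is inaccurate (there is no weak$^*$ topology on $\F(M)$ in general); the right reason $\F(K\cup U^c)+\ep B_{\F(M)}$ is closed is simply that it equals $\{\gamma:\dist(\gamma,\F(K\cup U^c))\leq \ep\}$, though this closedness is not actually needed anywhere in the four items as stated.
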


\begin{remark} 
    Uniformly regular sets are not stable under isomorphisms (even isometric) between Lipschitz-free spaces. 
    Indeed, it is well known that if $C$ denotes the middle-third Cantor set and $M = \N$, then 
    \[
    \F(C) \equiv \ell_1 \equiv \F(M).
    \] 
    However, the forthcoming Proposition~\ref{prop:PerfectURnotwpreCPT} shows that there exists a uniformly regular set $W \subset \F(C)$ which is not weakly compact, whereas this is impossible in $\F(M)$, as noted in Remark~\ref{ex:ball_not_UR}.
\end{remark}

On the other hand, as we will show in the three lemmata below, uniformly regular sets are stable under the following special maps: linearizations of Lipschitz maps, weighted operators, and change of base-point maps. 

\begin{lemma}\label{l:StabilityUnderFHat}
    Let $f \in \Lip_0(M,N)$ and assume that $W \subset \Free(M)$ is uniformly regular.
    Then $\hat{f}(W)$ is uniformly regular in $\F(N)$.
\end{lemma}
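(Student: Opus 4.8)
The plan is to show that $\hat{f}(W)$ satisfies the definition of uniform regularity directly, using the approximation of $W$ by free spaces over compact sets and the basic behaviour of linearization maps with respect to supports.

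\medskip

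First I would fix an open set $U \subset N$ and $\ep > 0$, and set $V := f^{-1}(U)$, which is open in $M$ since $f$ is continuous. We may assume without loss of generality that $\Lip(f) \le 1$ (otherwise rescale, or more precisely apply the argument with $\ep/(1+\Lip(f))$), so that $\|\hat f\| \le 1$. Applying uniform regularity of $W$ to the open set $V$ and the tolerance $\ep$, we obtain a compact set $K \subset V$ such that $W \subset \F(K \cup V^c) + \ep B_{\F(M)}$. The natural candidate for the compact set witnessing uniform regularity of $\hat f(W)$ inside $U$ is $H := f(K)$, which is compact since $f$ is continuous and $K$ is compact. Also $H \subset f(V) \subset U$, which is what we need.

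\medskip

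The key step is then to check the inclusion $\hat f(W) \subset \F(H \cup U^c) + \ep B_{\F(N)}$. Take $w \in W$ and write $w = \gamma + \ep b$ with $\gamma \in \F(K \cup V^c)$ and $b \in B_{\F(M)}$. Then $\hat f(w) = \hat f(\gamma) + \ep \hat f(b)$, and $\hat f(b) \in B_{\F(N)}$ since $\|\hat f\| \le 1$. It remains to argue that $\hat f(\gamma) \in \F(H \cup U^c)$. This follows from the standard fact that $\hat f$ maps $\F_M(A)$ into $\F_N(\overline{f(A)})$ for any subset $A \subset M$: indeed, $\hat f \circ \delta_M = \delta_N \circ f$, so $\hat f$ sends $\delta_M(x)$ to $\delta_N(f(x))$ for $x \in A$, hence sends the closed linear span of $\{\delta_M(x) : x \in A\}$ into the closed linear span of $\{\delta_N(f(x)) : x \in A\} \subset \{\delta_N(y) : y \in \overline{f(A)}\}$. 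Applying this with $A = K \cup V^c$ gives $\hat f(\gamma) \in \F_N(\overline{f(K \cup V^c)}) = \F_N(\overline{f(K) \cup f(V^c)})$. Now $f(K) = H$ is already compact, and since $f(V) \subset U$, i.e. $f(V^c \cup$ anything outside$)$... more carefully: $V = f^{-1}(U)$ implies $f(V^c) \subset U^c$ (if $f(x) \in U$ then $x \in V$), so $\overline{f(K) \cup f(V^c)} \subset \overline{H \cup U^c}$. If $U^c$ is closed this equals $H \cup U^c$ when $H$ is compact, but in general one should take $H \cup \overline{U^c} = H \cup U^c$ since $U^c$ is already closed; thus $\hat f(\gamma) \in \F_N(H \cup U^c)$, as required.

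\medskip

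The main obstacle, such as it is, is bookkeeping around the rescaling for $\Lip(f) \ne 1$ and making sure the preimage/image manipulations with $U$, $V$, and complements are done correctly (in particular that $f(f^{-1}(U)^c) \subset U^c$, which is the one genuinely used direction). There is no deep analytic difficulty here; the statement is essentially a functoriality property combined with Grothendieck-style approximation, and the only care needed is that taking preimages of open sets preserves openness while images of compact sets preserve compactness, so the roles of "open set" and "compact exhaustion" transfer cleanly through $\hat f$. I would also note that parts (1) and (4) of Proposition~\ref{p:URbasic} could streamline the write-up, but the direct argument above is short enough to give in full.
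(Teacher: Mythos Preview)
Your proposal is correct and follows essentially the same route as the paper's proof: pull back the open set via $f^{-1}$, use uniform regularity of $W$ there, push forward the compact set via $f$, and observe that $\hat f$ sends $\F(K\cup V^c)$ into $\F(f(K)\cup U^c)$ because $f(f^{-1}(U)^c)\subset U^c$. The only cosmetic differences are that the paper carries the factor $\Lip(f)$ through rather than rescaling, and is terser about the support inclusion.
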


\begin{proof}
    Let $V$ be an open set in $N$ and $\ep>0$. Then $U:=f^{-1}(V)$ is an open set in $M$. Therefore, by assumption, there exists a compact set $K \subset U$ such that $W \subset \F(K \cup (M \setminus U)) + \ep B_{\F(M)}$. Since $f$ is continuous, $f(K)$ is a compact subset of $V$. 
    \[
    \begin{aligned}
        \hat{f}(W)&\subset \hat{f}\left(\Free(K\cup (M\setminus U))+\varepsilon B_{\Free(M)}\right)\\ 
        &\subset \hat{f}\left(\overline{\lspan}(K\cup (M\setminus U))\right)+\varepsilon \Lip(f) B_{\Free(N)}\\
        &\subset \overline{\lspan}(f(K \cup (M\setminus U)))+\varepsilon \Lip(f)B_{\Free(N)}\\
        &= \Free(f(K\cup (M\setminus U)))+\varepsilon \Lip(f)B_{\Free(N)}\\
        &\subset \Free(f(K) \cup (N\setminus V))+\varepsilon \Lip(f) B_{\Free(N)}.
    \end{aligned}
    \]
\end{proof}

\begin{lemma}\label{l:URthroughSupportOps}
    Let $N \subset M$ be a closed subset. Let $T:\Free(M) \to \Free(N)$ be a bounded linear operator such that for every $\mu \in \Free(M)$ we have $\supp(T\mu)\subset \supp(\mu)\cap N$.
    If $W \subset \Free(M)$ is uniformly regular then $T(W)$ is uniformly regular.
\end{lemma}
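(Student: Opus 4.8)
The plan is to reduce the statement for $T(W)$ to the uniform regularity of $W$ by understanding how the support-shrinking property of $T$ interacts with open sets in $N$. Fix an open set $V \subset N$ and $\ep > 0$. Since $N$ carries the subspace topology, write $V = U \cap N$ for some open $U \subset M$. By uniform regularity of $W$ in $\F(M)$, there is a compact $K \subset U$ with $W \subset \F(K \cup U^c) + \ep B_{\F(M)}$. The natural candidate for the compact set witnessing uniform regularity of $T(W)$ inside $V$ is $K \cap N$ (a closed subset of the compact $K$, hence compact, and contained in $U \cap N = V$). So the goal is to show $T(W) \subset \F_N(K \cap N) + \ep \Lip(T) B_{\F(N)}$, up to the harmless norm factor $\|T\|$.

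First I would take $w \in W$ and write $w = \gamma + \ep b$ with $\gamma \in \F_M(K \cup U^c)$ and $b \in B_{\F(M)}$. Then $Tw = T\gamma + \ep Tb$, and $\|Tb\| \leq \|T\|$, so it remains to control $T\gamma$. The key point is the support condition: $\supp(T\gamma) \subset \supp(\gamma) \cap N \subset (K \cup U^c) \cap N = (K \cap N) \cup (U^c \cap N)$. Now $U^c \cap N = N \setminus (U \cap N) = N \setminus V = V^c$ (complement taken in $N$). Hence $\supp(T\gamma) \subset (K \cap N) \cup V^c$, which gives $T\gamma \in \F_N\big((K\cap N) \cup V^c\big)$ by definition of the support and the identification $\F_N(A) \equiv \F(A \cup \{0\})$. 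Therefore $Tw \in \F_N\big((K \cap N) \cup V^c\big) + \ep \|T\| B_{\F(N)}$ for every $w \in W$, which is exactly the defining inclusion for uniform regularity of $T(W)$ (after rescaling $\ep$, or invoking Grothendieck's criterion, Proposition~\ref{p:URbasic}(4), to absorb the factor $\|T\|$).

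The step I expect to require the most care is the purely set-theoretic/topological bookkeeping: making sure the support of $T\gamma$ really lands in $(K \cap N) \cup V^c$ and that this is a \emph{closed} subset of $N$ over whose free space $T\gamma$ genuinely lives. This hinges on two facts recalled in the preliminaries: that for closed $A \subset N$ one has $\F_N(A) \equiv \F(A \cup \{0\})$, so membership in $\F_N(A)$ is equivalent to having support inside $A$; and that $\supp(\gamma) \subseteq K \cup U^c$ precisely because $\gamma \in \F_M(K \cup U^c)$ and $K \cup U^c$ is closed. One should also note that the basepoint $0$ may need to be adjoined throughout; since $0 \in U^c$ or $0 \in V^c$ whenever $0 \notin V$, and the case $0 \in V$ is handled by shrinking $U$ or adjoining $0$ to $K$, this causes no real trouble. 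Finally, to get a clean $\ep$ rather than $\ep\|T\|$, apply the lemma with $\ep' = \ep/\max(1,\|T\|)$ at the outset, or cite Proposition~\ref{p:URbasic}(4) directly.
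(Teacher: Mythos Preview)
Your proof is correct and follows essentially the same route as the paper: lift the open $V\subset N$ to an open $U\subset M$, apply uniform regularity of $W$ to get a compact $K\subset U$, and then use the support-shrinking hypothesis to conclude that $T\gamma\in\F_N((K\cap N)\cup(N\setminus V))$, yielding $T(W)\subset\F_N((K\cap N)\cup V^c)+\varepsilon\|T\|B_{\F(N)}$. The paper's version is terser and does not fuss over the basepoint or the $\|T\|$ factor, but the argument is identical.
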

This lemma applies in particular to weighted operators (see the end of Subsection~\ref{section:LipFree}). 

\begin{proof}
Let $V$ be an open subset of $N$ and let $\varepsilon>0$. 
There is an open $U \subset M$ such that $V=N\cap U$. 
Notice that $N\setminus V=N\cap M\setminus U$.
By hypothesis, there exists a compact $K \subset U$ such that
$W\subset \Free(K\cup (M\setminus U))+\varepsilon B_{\Free(M)}$.
Thus 
\begin{align*}
T(W) &\subset  \Free(\big(K\cup (M\setminus U) \big) \cap N)+\varepsilon \|T\| B_{\Free(N)}  \\
&\subset \Free((K\cap N) \cup (N\setminus V))+\varepsilon\norm{T} B_{\Free(N)},
\end{align*}
which yields the conclusion.
\end{proof}

\begin{lemma}\label{l:StabilityUnderChangeOfBasepoint}
 Let $0\neq a \in M$. 
 And let $\Free_0(M)$, resp. $\Free_a(M)$, be the canonical preduals of $\Lip_0(M)$, resp. $\Lip_a(M)$.
 Let $T:\Free_0(M)\to \Free_a(M)$ be the canonical isometry such that $T(\delta(x))=\delta(x)-\delta(0)$.
 Then $W \subset \Free_0(M)$ is uniformly regular iff $T(W) \subset \Free_a(M)$ is uniformly regular.
\end{lemma}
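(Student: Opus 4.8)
The plan is to exhibit a natural bijection between the open sets of $M$ relevant to each free space and to check that the two uniform regularity conditions match up term by term. The key observation is that, although $\Free_0(M)$ and $\Free_a(M)$ are built from the \emph{same} metric space $M$ with \emph{different} base points, the isometry $T$ induces the identity on supports in the appropriate sense: writing $0$ and $a$ for the two base points, one has $\supp_a(T\gamma) = \supp_0(\gamma) \cup \{0\}$ (up to the harmless addition of base points), because $T$ is nothing but the canonical identification of $\Free_0(M)$ with $\Free_0(M \cup \{a\})$ followed by a relabelling of the distinguished point. Concretely, $T$ sends $\delta_0(x)$ to $\delta_a(x) - \delta_a(0)$, so $T(\Free_0(N)) = \Free_a(N \cup \{0\})$ for every closed $N \subset M$, and the same relation holds with $N$ replaced by $N \cup \{0\}$.

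First I would fix an open set $U \subset M$ and $\ep > 0$, and suppose $W \subset \Free_0(M)$ is uniformly regular, choosing a compact $K \subset U$ with $W \subset \Free_0\bigl(K \cup U^c\bigr) + \ep B_{\Free_0(M)}$. Since $T$ is a surjective isometry, applying it gives $T(W) \subset T\bigl(\Free_0(K \cup U^c)\bigr) + \ep B_{\Free_a(M)}$. Now $T\bigl(\Free_0(K \cup U^c)\bigr) = \Free_a\bigl(K \cup U^c \cup \{0\}\bigr)$. Here I distinguish two cases according to whether $0 \in U$ or not. If $0 \notin U$, then $\{0\} \subset U^c$ and the set $K \cup U^c \cup \{0\} = K \cup U^c$, so $T(W) \subset \Free_a(K \cup U^c) + \ep B_{\Free_a(M)}$ and we are done with the same compact set $K$. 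If $0 \in U$, then I replace $K$ by $K' := K \cup \{0\}$, which is still a compact subset of $U$ (since $0 \in U$), and observe that $K \cup U^c \cup \{0\} = K' \cup U^c$, so $T(W) \subset \Free_a(K' \cup U^c) + \ep B_{\Free_a(M)}$. In both cases we have produced a compact subset of $U$ witnessing uniform regularity of $T(W)$ in $\Free_a(M)$, with the same $\ep$.

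The converse direction is symmetric: $T^{-1} : \Free_a(M) \to \Free_0(M)$ is again the canonical isometry, now sending $\delta_a(x)$ to $\delta_0(x) - \delta_0(a)$, and it satisfies $T^{-1}\bigl(\Free_a(N)\bigr) = \Free_0(N \cup \{a\})$ for closed $N$. Running exactly the same argument, with the roles of $0$ and $a$ exchanged, shows that if $T(W)$ is uniformly regular then so is $W = T^{-1}(T(W))$; the only modification is that when $a \in U$ one enlarges the compact witness by the point $a$, which lies in $U$. This completes the equivalence.

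I do not expect any genuine obstacle here; the one point requiring a little care is the bookkeeping of base points, i.e. making sure that adding $0$ (or $a$) to the compact set $K$ is legitimate, which is exactly why the case split on whether $0 \in U$ is needed. An alternative, slightly slicker route would be to invoke Lemma~\ref{l:URthroughSupportOps} directly: both $T$ and $T^{-1}$ are bounded linear operators whose ranges consist of elements whose supports are contained in (the original support union a single base point), and one could phrase a minor variant of that lemma allowing the target space to be $\Free(M)$ with a different distinguished point. But since the base-point change is so transparent, I would prefer to give the short direct argument above rather than reformulate Lemma~\ref{l:URthroughSupportOps}.
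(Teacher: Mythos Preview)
Your proof is correct and follows essentially the same route as the paper's: apply $T$ to the defining inclusion, use that $T$ respects the relevant subspaces (the paper does this via molecular approximations and the identity $T(m_{xy})=m_{xy}$, you more directly at the level of subspaces), and then absorb the point $0$ into the compact witness with the same case split on whether $0\in U$. One minor quibble: the equality $T(\Free_0(N)) = \Free_a(N\cup\{0\})$ you assert is in general only an inclusion $\subset$ (strict when $a\notin N$, since then $\delta_a(0)$ lies in the right-hand side but not the left), but as you only use that direction the argument is unaffected.
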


\begin{proof}
    Let $U \subset M$ be open set, let $\varepsilon>0$. 
    It is enough to show that there exists $K \subset U$ compact such that $T(W) \subset \Free_a(K \cup M\setminus U) +\varepsilon B_{\Free_a(M)}$. 
    First, notice that since $U$ is uniformly regular, there exists a compact $L \subset U$ such that for every $w\in W$ there are $x_i,y_i \in L \cup \set{0}\cup M\setminus U$ and $a_i \in \Real$, for $i\in \set{1,\ldots, n}$,  such that $\norm{w-\sum_{i=1}^n a_im_{x_iy_i}}\leq \varepsilon$.
    We now have $$\norm{Tw-\sum_{i=1}^n a_im_{x_iy_i}}=\norm{T\left(w-\sum_{i=1}^n a_im_{x_iy_i}\right)}=\norm{w-\sum_{i=1}^n a_im_{x_iy_i}}\leq \varepsilon.$$
    We can thus conclude by setting $K=L \cup \set{0}$ if $0\in U$ or $K=L$ if $0 \in M\setminus U$.
\end{proof}

In measure spaces, uniform regularity can be restated equivalently in terms of sequences of pairwise disjoint open sets (see e.g. \cite[Theorem 5.3.2]{AlbiacKalton}). As the main result in this section, we will now prove two similar equivalences for Lipschitz free spaces.

\begin{theorem} \label{thmURequiv}
Let $M$ be a complete metric space. For a bounded $W \subset \F(M)$, the following are equivalent:
\begin{enumerate}
\item[$(i)$] $W$ is uniformly regular,
\item[$(ii)$] for every sequence $(U_n)$ of pairwise disjoint open sets in $M$
$$
\lim_{n\to\infty} \sup_{\gamma\in W} \dist(\gamma,\F(U_n^c)) = 0,
$$
\item[$(iii)$] for every sequence $(f_n)$ in $B_{\Lip_0(M)}$ such that the sets $U_n=\set{x\in M:f_n(x)\neq 0}$ are pairwise disjoint
$$
\lim_{n\to\infty} \sup_{\gamma\in W} \abs{\duality{f_n,\gamma}} = 0.
$$
\end{enumerate}
\end{theorem}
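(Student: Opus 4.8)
\textbf{Proof proposal for Theorem~\ref{thmURequiv}.}

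The plan is to prove $(i)\Rightarrow(iii)\Rightarrow(ii)\Rightarrow(i)$, mimicking the classical proof for $\mathcal M(K)$ while replacing open-set arguments by molecule-approximation arguments.

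\emph{$(i)\Rightarrow(iii)$.} Suppose $W$ is uniformly regular but $(iii)$ fails for some sequence $(f_n)\subset B_{\Lip_0(M)}$ with pairwise disjoint supports $U_n$. Passing to a subsequence, there is $\alpha>0$ and $\gamma_n\in W$ with $|\langle f_n,\gamma_n\rangle|\geq\alpha$ for all $n$. Fix one index, say $n=1$, and apply uniform regularity to the open set $U:=U_1$ and $\ep:=\alpha/4$: there is a compact $K\subset U$ with $W\subset\F(K\cup U^c)+\ep B_{\F(M)}$. Now the point is that $f_1$ restricted near $K$ can be ``cut off'': since $K\subset U=U_1$ is compact and $U_1$ is open, $d(K,U_1^c)>0$, so we can find a Lipschitz truncation $g$ with $g=f_1$ on $K$, $g=0$ on $U_1^c\supset\supp(f_m)$ for $m\neq 1$, and $\|g\|_L$ controlled by a constant depending only on $\diam(K\cup\{0\})/d(K,U_1^c)$ — but this blows up and does not obviously stay bounded. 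So instead I would argue directly: write $\gamma_1=\eta+\rho$ with $\eta\in\F(K\cup U_1^c)$ and $\|\rho\|\leq\ep$. Since $f_1$ vanishes on $U_1^c$ and $K\subset U_1$, and since $\eta$ is supported on $K\cup U_1^c$, we have $|\langle f_1,\eta\rangle|=|\langle f_1\restricted_{K},\eta\restricted_{K}\rangle|\leq\|f_1\|_L\,\|\eta\restricted_K\|$; but $\eta\restricted_K$ is the image of $\eta$ under the restriction/support projection onto $\F(K)$, which is not norm-one in general. The cleaner route is: observe $\langle f_1,\gamma_1\rangle=\langle f_1,\eta\rangle+\langle f_1,\rho\rangle$, so $|\langle f_1,\eta\rangle|\geq\alpha-\ep=3\alpha/4$, hence $\dist(\gamma_1,\F(U_1^c))\geq\dist(\eta,\F(U_1^c))-\ep$. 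I will instead exploit disjointness over \emph{many} indices simultaneously: for $N$ large, consider the open set $U:=\bigcup_{n=1}^N U_n$ — no, the $U_n$ need not have a single compact exhausting them.

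Let me restart this implication more carefully. Given the disjoint supports $U_n$ and the failure of $(iii)$, I claim uniform regularity applied to a \emph{single well-chosen open set} gives a contradiction. Put $U:=\bigcup_{n\geq 1}U_n$, an open set, and fix $\ep:=\alpha/3$. Uniform regularity yields a compact $K\subset U$ with $W\subset\F(K\cup U^c)+\ep B_{\F(M)}$. Since $K$ is compact and the $U_n$ are pairwise disjoint open sets covering part of $K$, only finitely many $U_n$ meet $K$; pick $n_0$ with $U_{n_0}\cap K=\varnothing$. Then $\gamma_{n_0}=\eta+\rho$ with $\eta\in\F(K\cup U^c)$, $\|\rho\|\leq\ep$, and since $\supp(f_{n_0})=U_{n_0}$ is disjoint from $K\cup U^c$ (it is disjoint from $K$ by choice, and from $U^c$ since $U_{n_0}\subset U$), we get $\langle f_{n_0},\eta\rangle=0$, whence $|\langle f_{n_0},\gamma_{n_0}\rangle|=|\langle f_{n_0},\rho\rangle|\leq\|f_{n_0}\|_L\|\rho\|\leq\ep=\alpha/3<\alpha$, a contradiction. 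This is the crux of the argument and it works cleanly; the subtlety I was circling is simply that disjoint open sets meeting a fixed compact set are finite in number.

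\emph{$(iii)\Rightarrow(ii)$.} This is essentially a duality/Hahn--Banach observation: $\dist(\gamma,\F(U_n^c))=\sup\{\langle f,\gamma\rangle: f\in B_{\Lip_0(M)},\ f\restricted_{U_n^c}=0\}$, because $\F(U_n^c)$ is (isometric to) the canonical subspace $\F_M(U_n^c\cup\{0\})$ and its annihilator in $\Lip_0(M)$ consists precisely of the Lipschitz functions vanishing on $U_n^c$ (using completeness of $M$ so that $U_n^c$ is closed and the identification $\F(N\cup\{0\})\equiv\F_M(N)$ from Subsection~\ref{section:LipFree}). Such $f$ automatically has $\{x:f(x)\neq 0\}\subset U_n$, so the sets $\{x:f(x)\neq0\}$ are pairwise disjoint as $n$ varies. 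Given $\ep>0$, for each $n$ choose $f_n\in B_{\Lip_0(M)}$ vanishing on $U_n^c$ with $\sup_{\gamma\in W}\langle f_n,\gamma\rangle\geq\sup_{\gamma\in W}\dist(\gamma,\F(U_n^c))-1/n$; then $(iii)$ applied to $(f_n)$ forces $\sup_{\gamma\in W}\dist(\gamma,\F(U_n^c))\to0$. (A small point: to take the sup over $\gamma$ outside one must note $\sup_\gamma\dist(\gamma,\F(U_n^c))=\sup_\gamma\sup_f\langle f,\gamma\rangle$ and choose $f_n$ nearly attaining the iterated sup; this is routine.)

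\emph{$(ii)\Rightarrow(i)$.} Suppose $(ii)$ holds but $W$ is not uniformly regular: there is an open $U\subset M$ and $\ep>0$ such that for every compact $K\subset U$ there is $\gamma_K\in W$ with $\dist(\gamma_K,\F(K\cup U^c))>\ep$. I will build inductively a sequence of pairwise disjoint open subsets of $U$ witnessing the failure of $(ii)$. Start with $K_1=\varnothing$, get $\gamma_1\in W$ with $\dist(\gamma_1,\F(U^c))>\ep$; since finitely supported elements are dense, approximate $\gamma_1$ within $\ep/4$ by a finitely supported $\sigma_1$, let $F_1=\supp(\sigma_1)\cap U$ (a finite subset of $U$). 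For the inductive step, having chosen finite sets $F_1,\dots,F_{k}\subset U$, let $K_{k+1}:=\overline{\,[F_1\cup\dots\cup F_k]_{r_k}\,}\cap U$ be a suitable compact neighborhood inside $U$ (with $r_k>0$ small enough that this closed $r_k$-neighborhood stays inside $U$; possible near each point of the finite set since $U$ is open), apply the failure of uniform regularity to get $\gamma_{k+1}$ with $\dist(\gamma_{k+1},\F(K_{k+1}\cup U^c))>\ep$, approximate by finitely supported $\sigma_{k+1}$ within $\ep/4$, and set $F_{k+1}=\supp(\sigma_{k+1})\cap U$. The key geometric claim is that the ``new mass'' of $\gamma_{k+1}$ must live in a region of $U$ that can be separated by an open set from all of $F_1,\dots,F_k$: because $\dist(\gamma_{k+1},\F(K_{k+1}\cup U^c))>\ep$ and $\sigma_{k+1}$ approximates $\gamma_{k+1}$, a positive-weight ($>\ep/2$) part of $\sigma_{k+1}$ consists of molecules $m_{xy}$ with $x$ or $y$ in $U$ at distance $>r_k$ from $F_1\cup\dots\cup F_k$; choosing the $r_k$ decreasing to $0$ fast, these ``witness points'' for different $k$ lie in disjoint annular regions of $U$, around which one can place pairwise disjoint open sets $V_k\subset U$ with $\dist(\gamma_k,\F(V_k^c))\geq\ep/2$ (say), contradicting $(ii)$. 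The main obstacle — and where I would spend the most care — is exactly this last extraction: turning the metric separation of the support points into genuinely \emph{pairwise disjoint open sets} while keeping, for each $k$, a definite lower bound on $\dist(\gamma_k,\F(V_k^c))$. The right bookkeeping is to track, for each $\gamma_k$, a finite set of points $P_k\subset U$ carrying weight $>\ep/2$ of $\sigma_k$ and a radius $\rho_k>0$ with the closed $\rho_k$-balls around $P_k$ mutually disjoint and disjoint from all $P_j$, $j<k$ (achievable by shrinking radii and using that each $P_k$ is finite and avoids the previously frozen compact neighborhood $K_k$); then $V_k:=\bigcup_{p\in P_k}B^O(p,\rho_k)$, and one checks $\dist(\gamma_k,\F(V_k^c))\geq\dist(\sigma_k,\F(V_k^c))-\ep/4\geq(\text{weight of }\sigma_k\text{ in }V_k)\cdot(\text{something})-\ep/4$, using that a molecule $m_{xy}$ with both endpoints outside $V_k$ is in $\F(V_k^c)$ and that the remaining weight is $>\ep/2$. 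Wrapping the constants appropriately gives the contradiction with $(ii)$.

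I expect $(i)\Rightarrow(iii)$ and $(iii)\Rightarrow(ii)$ to be short; the real work is $(ii)\Rightarrow(i)$, whose difficulty is the standard one in ``disjointification'' arguments — converting a sequence of ever-larger compact exhaustions failing approximation into an honest sequence of pairwise disjoint open sets — and which is the Lipschitz-free analogue of the corresponding step in Grothendieck's theorem for $\mathcal M(K)$.
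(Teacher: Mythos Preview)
Your $(i)\Rightarrow(iii)$ (after the restart) and $(iii)\Rightarrow(ii)$ are correct and essentially coincide with the paper's arguments: the paper routes through $(i)\Rightarrow(ii)$ and proves $(ii)\Leftrightarrow(iii)$ separately, but the ingredients are the same --- the finite-subcover observation that a compact $K\subset\bigcup_n U_n$ meets only finitely many of the disjoint $U_n$, and Hahn--Banach with sup-swapping for the duality formula.

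The genuine gap is in $(ii)\Rightarrow(i)$. Your set $K_{k+1}:=\overline{[F_1\cup\dots\cup F_k]_{r_k}}\cap U$ is a finite union of closed balls intersected with an open set, and in a general complete metric space closed balls need not be compact; so you cannot invoke the failure of uniform regularity against $K_{k+1}$. Replacing $K_{k+1}$ by the finite set $F_1\cup\dots\cup F_k$ (which \emph{is} compact) does not rescue the construction: without the $r_k$-thickening, the new witness points $P_{k+1}$ may land arbitrarily close to the old $P_j$'s, and then no choice of radii $\rho_k$ will yield pairwise disjoint open sets $V_k$ while preserving a uniform lower bound on $\dist(\gamma_k,\F(V_k^c))$. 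You flag this as ``the main obstacle'' but the sketch does not overcome it.

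The paper's fix is substantial and is precisely what your proposal lacks. Before attempting any disjointification, it first proves (Step~1, using $(iii)$) that $W\subset\F(B(0,r))+\varepsilon B_{\F(M)}$ for suitable $r$, and then (Step~2, again via $(iii)$ together with a nontrivial McShane--Whitney rescaling trick) that $W$ is \emph{tight}, i.e.\ $W\subset\F(K)+\varepsilon B_{\F(M)}$ for an honestly compact $K\subset M$. Only with this global $K$ in hand does the induction (Step~3) go through: the paper takes
\[
K_i:=\Bigl(\bigcup_{u\in C_i}B\bigl(u,\tfrac12\,d(u,\textstyle\bigcup_{j<i}K_j\cup U^c)\bigr)\Bigr)\cap K,
\qquad
U_i:=\bigcup_{u\in C_i}B^O\bigl(u,\tfrac14\,d(u,\textstyle\bigcup_{j<i}K_j\cup U^c)\bigr),
\]
where the intersection with $K$ forces compactness of $K_i$, and the specific $\tfrac12$ versus $\tfrac14$ ratio of radii is exactly what makes the pairwise disjointness of the $U_i$ verifiable. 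Your proposal contains neither the preliminary tightness argument nor this calibrated choice of radii, and without them the implication $(ii)\Rightarrow(i)$ does not close in a non-proper metric space.
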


\begin{proof}

	$(i) \implies (ii)$: We prove the contrapositive. Assume that $(ii)$ fails. Passing to a subsequence if necessary, we may assume that there exist $\ep>0$ and a sequence of pairwise disjoint open sets $(U_n)$ such that $\sup_{\gamma \in W} \dist(\gamma , \F(U_n^c)) > \ep$. For every $n$, let $\gamma_n \in W$ such that  $\dist(\gamma_n , \F(U_n^c)) > \ep$. Set $U := \bigcup_n U_n$. Now for any compact $K \subset U$, there exists $N \in \N$ such that $K \subset \bigcup_{n=1}^N U_n$. In particular $K \cup U^c \subset U_{N+1}^c$. We deduce that 
	$$ \dist(\gamma_{N+1} , \F(K \cup U^c)) \geq \dist(\gamma_{N+1} , \F(U_{N+1}^c)) > \ep,$$
	and hence $W$ is not uniformly regular. 
	
	\medskip

   $(ii) \implies (iii)$: Let $\varepsilon>0$ and $(f_n)\subset B_{\Lip_0(M)}$ be such that the sets $U_n=\set{x\in M:f_n(x)\neq 0}$ are pairwise disjoint and that $\sup_{\gamma \in W} \abs{\duality{\gamma,f_n}}>\varepsilon$ for every $n$. 
   For a given $k$, let $\gamma \in W$ be such that $\abs{\duality{\gamma,f_k}}>\varepsilon$. 
   Let $\mu \in \Free(U_k^c)$. 
   We have $\norm{\gamma-\mu}\geq \abs{\duality{\gamma-\mu,f_k}}=\abs{\duality{\gamma,f_k}}>\varepsilon$. It follows that $\sup_{\gamma \in W}\dist(\gamma,\Free(U^c_k))>\varepsilon$ for every $k$.
	
	\medskip
	
   $(iii) \implies (ii)$: Assume that there exist pairwise disjoint open sets $U_n$ and elements $\gamma_n\in W$ such that $\dist(\gamma_n,\F(U_n^c))>\varepsilon$ for some $\varepsilon>0$. 
   For every $n$, the Hahn-Banach theorem yields $f_n\in B_{\Lip_0(M)}$ such that $\duality{f_n,\gamma_n}>\varepsilon$ and $f_n=0$ on $\F(U_n^c)$, i.e. $f_n=0$ on $U_n^c$. 
   Since $\duality{f_n,\gamma_n}>\varepsilon$, this contradicts $(iii)$.
   
	\medskip
	
	$(ii)$ and $(iii) \implies (i)$: This implication involves a much longer argument, so we will split the proof into several steps. Assume that $W$ satisfies $(ii)$ and $(iii)$.
	
	\medskip
	
	\textbf{Step 1}: \emph{Given $\varepsilon>0$, there is $r<\infty$ such that $W\subset \F(B(0,r))+\varepsilon B_{\F(M)}$.}
    
	    We will prove the contrapositive, thus assume that there exists $\varepsilon>0$ such that for every $r>0$ we have $W\not \subset \F(B(0,r))+\varepsilon B_{\F(M)}$. Let $\gamma_1\in W$ be such that $\|\gamma_1\|>\varepsilon$. By the Hahn-Banach theorem there exists $f_1 \in B_{\Lip_0(M)}$ such that $\< f_1 , \gamma_1\> > \varepsilon$. We may additionally assume that there exists $r_1>0$ such that $f_1 \restricted_{B(0,r_1)^c} = 0$, as functions with bounded support are weak$^*$ dense in $B_{\Lip_0(M)}$ (see e.g. \cite[Lemma 2.2]{APPP}). Since $W\not \subset \F(B(0,r_1))+\varepsilon B_{\F(M)}$, there exists $\gamma_2\in W$ such that $\dist(\gamma_2 , \F(B(0,r_1)) > \varepsilon$. By the Hahn-Banach theorem there exists $f_2 \in B_{\Lip_0(M)}$ such that $\< f_2 , \gamma_2\> > \varepsilon$ and $f_2 \restricted_{B(0,r_1)} = 0$. As before, we may additionally assume that there exists $r_2>r_1$ such that $f_2 \restricted_{B(0,r_2)^c} = 0$. Continuing in such a way we will construct a sequence of functions $(f_n)\subset B_{\Lip_0(M)}$ such that the sets $U_n=\set{x\in M:f_n(x)\neq 0}$ are pairwise disjoint and $\< f_n , \gamma_n\> > \varepsilon$ for every $n\in \N$, in contradiction with $(iii)$.
	
	\medskip
	
	\textbf{Step 2}: \emph{$W$ is tight.}
    Again, we will prove the contrapositive, thus assume $W$ does not have the Kalton's property (which is equivalent to tightness \cite{ANPP}). Hence there exist $\varepsilon,\delta>0$ such that for every finite subset $F$ we have $W\not \subset \F([F]_\delta)+\varepsilon B_{\F(M)}$. By Step 1, there exists $r>\delta/4$ such that
		$$W\subset \F(B(0,r))+\frac{\varepsilon}{4} B_{\F(M)}.$$
    Let $\gamma_1\in W$ be such that $\|\gamma_1\|>\varepsilon$. Let $\mu_1\in \F(B(0,r))$ be such that $\|\gamma_1-\mu_1\|\le\frac{\varepsilon}{4}$. Then $\|\mu\|>\frac{3\varepsilon}{4}$ and thus there exists $g_1\in B_{\Lip_0(M)}$ such that $\< g_1 , \mu_1\> > \frac{3\varepsilon}{4}$. We may additionally assume that $g_1\restricted_{B(0,2r)^c}=0$ by replacing $g_1$ with a function $g'_1$ defined by $g'_1=g_1$ on $B(0,r)$, $g'_1=0$ on $B(0,2r)^c$, and extended to $M$ by McShane-Whitney theorem (see e.g. \cite[Theorem 1.33]{Weaver2}). Furthermore, $\< g_1 , \gamma_1\> > \frac{\varepsilon}{2}$. Pick a finitely supported element $\gamma_1'\in\F(M)$ such that $\< g_1 , \gamma_1'\> > \frac{\varepsilon}{2}$ and $\|\gamma_1-\gamma_1'\|<\frac{\varepsilon\delta}{16r}$. Let $F_1=\supp(\gamma_1')\cup\set{0}$ and define $f_1\colon [F_1]_{\delta/2}^c \cup F_1\rightarrow\mathbb{R}$ in such way that $f_1 \restricted_{[F_1]_{\delta/2}^c} = 0$ and $f_1 \restricted_{F_1} = \frac{\delta}{4r}g_1 \restricted_{F_1}$. Since $g_1\restricted_{B(0,2r)^c}=0$, for every $u\in F_1$ we have $\abs{g_1(u)}\le 2r$ and thus $\abs{f_1(u)}\le \frac{\delta}{2}$. Thus, since $\frac{\delta}{4r}\leq 1$, the Lipschitz constant of $f_1$ is at most 1. Extend $f_1$ to $M$ using McShane--Whitney Theorem. Then $f_1\in B_{\Lip_0(M)}$ and $\< f_1 , \gamma_1'\>=\frac{\delta}{4r}\< g_1 , \gamma_1'\> > \frac{\varepsilon\delta}{8r}$, and thus $\< f_1 , \gamma_1\>> \frac{\varepsilon\delta}{16r}$.

     We will continue recursively. Assume we have found $\gamma_1,\ldots,\gamma_n\in W$, finite sets $F_1,\ldots,F_n\subset M$, and functions $f_1,\ldots,f_n\in B_{\Lip_0(M)}$. Now, since  $W\not \subset \F\left(\left[\bigcup_{i=1}^nF_i\right]_\delta\right)+\varepsilon B_{\F(M)}$, there exists $\gamma_{n+1}\in W$ be such that
		$$\dist\left(\gamma_{n+1},\F\left(\left[\bigcup_{i=1}^nF_i\right]_\delta\right)\right)>\varepsilon.$$
		Let $\mu_{n+1}\in \F(B(0,r))$ be such that $\|\gamma_{n+1}-\mu_{n+1}\|\le \frac{\varepsilon}{4}$. Then
		$$\dist\left(\mu_{n+1},\F\left(\left[\bigcup_{i=1}^nF_i\right]_\delta\right)\right)>\frac{3\varepsilon}{4}.$$
		By the Hahn-Banach theorem, there exists $g_{n+1}\in B_{\Lip_0(M)}$ such that $\< g_{n+1} , \mu_{n+1}\> > \frac{3\varepsilon}{4}$ and $g_{n+1} \restricted_{\left[\bigcup_{i=1}^nF_i\right]_\delta}=0$. Since $\supp(\mu)\subset B(0,r)$, we may additionally assume that $g_{n+1} \restricted_{B(0,2r)}=0$ as above. Furthermore, $\< g_{n+1} , \gamma_{n+1}\> > \frac{\varepsilon}{2}$.  Pick a finitely supported element $\gamma_{n+1}'\in\F(M)$ such that $\< g_{n+1} , \gamma_{n+1}'\> > \frac{\varepsilon}{2}$ and $\|\gamma_{n+1}-\gamma_{n+1}'\|<\frac{\varepsilon\delta}{16r}$. Let
		$$F_{n+1}=\supp(\gamma_{n+1}')\setminus \left[\bigcup_{i=1}^nF_i\right]_\delta$$
		and define $f_{n+1}\colon [F_{n+1}]_{\delta/2}^c \cup F_{n+1}\cup\set{0}\rightarrow\mathbb{R}$ in such way that $f_{n+1}=0$ on $[F_{n+1}]_{\delta/2}^c$ and $f_{n+1} = \frac{\delta}{4r}g_{n+1}$ on $F_{n+1}\cup\set{0}$. For every $u\in F_{n+1}$ we have $\abs{g_{n+1}(u)}\le 2r$ and thus $\abs{f_{n+1}(u)}\le \frac{\delta}{2}$. Thus the Lipschitz constant of $f_{n+1}$ is at most 1. Extend $f_{n+1}$ to $M$ using McShane--Whitney Theorem. Then $f_{n+1}\in B_{\Lip_0(M)}$ and $\< f_{n+1} , \gamma_{n+1}'\>=\frac{\delta}{4r}\< g_{n+1} , \gamma_{n+1}'\> > \frac{\varepsilon\delta}{8r}$ and thus $\< f_{n+1} , \gamma_{n+1}\>> \frac{\varepsilon\delta}{16r}$ . 
     
     Now we have defined our sequences $(f_n)\subset B_{\Lip_0(M)}$ and $(\gamma_n)\subset W$ such that $\< f_n,\gamma_n \> > \frac{\varepsilon\delta}{16r}$. Let us check that the sets $U_n=\set{x\in M:f_n(x)\neq 0}$ are pairwise disjoint. Clearly $U_n\subset [F_n]_{\delta/2}$. Furthermore, since for every $i<n$ we have $ [F_i]_\delta\cap F_{n}=\emptyset$, then also $[F_i]_{\delta/2}\cap [F_{n}]_{\delta/2}=\emptyset$. Hence the sets $U_n$ are pairwise disjoint, and this contradicts $(iii)$. 
	
	\medskip
	
	\textbf{Step 3}: \emph{$W$ is uniformly regular.}
	Once again, we work by contradiction. So assume there exist $\varepsilon > 0$ and an open set $U \subset M$ such that for every compact $K \subset U$, $W \not\subset \F(K \cup U^c) + \varepsilon B_{\F(M)}$. Since $W$ is tight by Step 2, there exists a compact set $K\subseteq M$ such that $W\subseteq \F(K)+\frac{\varepsilon}{32}B_{\F(M)}$. 
	Let $K_0 = \set{0}\cap U$. By assumption there exists $\gamma_1 \in W$ such that $\dist(\gamma_1 , \F(K_0 \cup U^c)) > \varepsilon$.
	Pick $\mu_1\in\F(K)$ finitely supported such that $\|\gamma_1-\mu_1\|<\frac{\varepsilon}{16}$. Then $\dist(\mu_1 , \F(K_0 \cup U^c)) > \frac{\varepsilon}{2}$. 
	Let 
	\begin{align*}
		C_1&:=\supp(\mu_1)\setminus (K_0 \cup U^c),\\
		K_1&:=\bigcup_{u\in C_1}B\left(u,\frac{1}{2}d\left(u, K_0\cup U^c\right)\right)\cap K,\\
		U_1&:=\bigcup_{u\in C_1}B^O\left(u,\frac{1}{4}d(u,K_0\cup U^c)\right).
	\end{align*}
	Clearly $K_1$ is compact and $K_1,U_1\subseteq U\setminus K_0$.
	We claim that $\dist(\mu_1,\F(U_1^c))>\frac{\varepsilon}{8}$. Indeed, since $\dist(\mu_1 , \F(K_0 \cup U^c)) > \frac{\varepsilon}{2}$, by the Hahn-Banach theorem there exists $f_1 \in B_{\Lip_0(M)}$ such that $\< f_1 , \mu_1\> > \frac{\varepsilon}{2}$ and $f_1 \restricted_{K_0 \cup U^c} = 0$. Define $g_1\colon U_1^c \cup C_1\rightarrow\mathbb{R}$ in such way that $g_1 \restricted_{U_1^c} = 0$ and $g_1 \restricted_{C_1} = \frac{1}{4} f_1 \restricted_{C_1}$. For every $u\in C_1$ and $v\in U_1^c$ we have 
	$$|g_1(u)-g_1(v)|=\frac{1}{4}|f_1(u)|\le \frac{1}{4}d(u, K_0\cup U^c)\le d(u,v).$$
	Thus the Lipschitz constant of $g_1$ is at most 1. Extend $g_1$ to $M$ using McShane--Whitney theorem. Then $g_1\in B_{\Lip_0(M)}$ and $g_1 \restricted_{U_1^c} = 0$. Note that if $u\in \supp(\mu_1)\setminus C_1$, then $d(u,K_0 \cup U^c)=0$ and thus $g_1(u)=0=\frac{1}{4}f_1(u)$. Therefore $g_1 \restricted_{\supp(\mu_1)} = \frac{1}{4} f_1 \restricted_{\supp(\mu_1)}$, which gives us $\<g_1,\mu_1\>=\frac{1}{4}\<f_1,\mu_1\>>\frac{\varepsilon}{8}$. Hence $\dist(\mu_1,\F(U_1^c))>\frac{\varepsilon}{8}$, which implies $\dist(\gamma_1,\F(U_1^c))>\frac{\varepsilon}{16}$. 
	\medskip
	
	Now assume we have defined $\gamma_1,\ldots,\gamma_n\in W$, finite sets $C_1,\ldots,C_n\subseteq K$, compact sets $K_1,\ldots,K_n \subseteq U\cap K$, and pairwise distinct open sets $U_1,\ldots,U_n\subseteq U$ in such a way that: For every $i\in\set{1,\ldots,n}$ we have 
	\begin{align*}
		&\dist(\gamma_i,\F(U_i^c)) > \frac{\varepsilon}{16},\\
		&K_i=\bigcup_{u\in C_i}B\left(u,\frac{1}{2}d\left(u, \bigcup_{j=0}^{i-1}K_j\cup U^c\right)\right)\cap K, \text{ and }\\
		&U_i=\bigcup_{u\in C_i}B^O\left(u,\frac{1}{4}d\left(u, \bigcup_{j=0}^{i-1}K_j\cup U^c\right)\right).
	\end{align*}
	
	The set $\bigcup_{i=0}^{n}K_i\subseteq U$ is compact, thus by assumption there exists $\gamma_{n+1} \in W$ such that $\dist\left(\gamma_{n+1} , \F\left(\bigcup_{i=0}^{n}K_i \cup U^c\right)\right) > \varepsilon$. Pick $\mu_{n+1}\in\F(K)$ finitely supported such that $\|\gamma_{n+1}-\mu_{n+1}\|<\varepsilon/16$. Then $\dist(\mu_{n+1} , \F(\bigcup_{i=0}^{n}K_i \cup U^c)) > \varepsilon/2$. 
	Let 
	\begin{align*}
		C_{n+1}&:=\supp(\mu_{n+1}) \setminus \left(\bigcup_{i=0}^{n}K_i \cup U^c\right),\\
		K_{n+1}&:=\bigcup_{u\in C_{n+1}}B\left(u,\frac{1}{2}d\left(u,\bigcup_{i=0}^{n}K_i\cup U^c\right)\right)\cap K,\\
		U_{n+1}&:=\bigcup_{u\in C_{n+1}}B^O\left(u,\frac{1}{4}d\left(u,\bigcup_{i=0}^{n}K_i\cup U^c\right)\right).
	\end{align*}
	Clearly $K_{n+1}$ is compact and both $K_{n+1}$ and $U_{n+1}$ are subsets of $U\setminus \bigcup_{i=0}^{n}K_i$. Similarly as before, one can show that $\dist(\gamma_{n+1},\F(U_{n+1}^c))>\frac{\varepsilon}{16}$. 
	\medskip
	
	So, the only thing left to prove is $U_i\cap U_{n+1}=\emptyset$ for every $i\in\set{1,\ldots,n}$. Assume that there exists $v\in U_i\cap U_{n+1}$ for some $i\in\set{1,\ldots,n}$. Then there exists $u_1\in C_i$ and $u_2\in C_{n+1}$ such that $d(u_1,v)<\frac{1}{4}d(u_1,\bigcup_{j=0}^{i-1} K_j\cup U^c)$ and $d(u_2,v)<\frac{1}{4}d(u_2,\bigcup_{j=0}^{n}K_j\cup U^c)$. Clearly $u_1\in K_i$, and thus 
	$d(u_2,\bigcup_{j=0}^{n}K_j\cup U^c)\le d(u_1,u_2).$
	Furthermore, $u_2\in K$ and $u_2\notin K_{i}$, thus $d(u_1,u_2)\ge \frac{1}{2}d(u_1,\bigcup_{j=0}^{i-1} K_j\cup U^c)$ by the definition of $K_i$.
	Therefore
	\begin{align*}
		d(u_1,u_2)&\le d(u_1,v)+d(u_2,v)\\
		&<\frac{1}{4}\left(d\left(u_1, \bigcup_{j=0}^{i-1} K_j\cup U^c\right)+d\left(u_2,\bigcup_{j=0}^{n}K_j\cup U^c\right)\right)
		\leq d(u_1,u_2).
	\end{align*}
	This is a clear contradiction. Now we have found a sequence of pairwise disjoint open sets $(U_n)$ such that
	$$ \sup_{\gamma \in W} \dist(\gamma , \F(U_n^c)) \geq \dist(\gamma_n , \F(U_n^c)) > \frac{\varepsilon}{16} ,$$
    which contradicts $(ii)$. This ends the proof.
\end{proof}

We conclude this section by showing that the notion of uniform regularity does not depend on the ambient space. 
Indeed, if $0 \in N \subset M$ and $W \subset \F(N)$ is uniformly regular relative to $\F(N)$, then it is clearly uniformly regular relative to $\F(M)$ as well. 
The converse direction, namely whether $W$ remains uniformly regular relative to $\F(N)$ when it is uniformly regular relative to $\F(M)$, is less immediate. To establish this, we will use the following lemma.

\begin{lemma}\label{l:DisjointExtension}
Let $0\in N\subset M$ be closed. Suppose that $(f_i)_{i\in I}$ are elements of $B_{\Lip_0(N)}$ such that the sets $U_i=\set{x\in N:f_i(x)\neq 0}$ are pairwise disjoint. Then there exist extensions $\tilde{f}_i\in 2B_{\Lip_0(M)}$ such that $\tilde{f}_i\restricted_N=f_i$ and the sets $\set{x\in M:\tilde{f}_i(x)\neq 0}$ are pairwise disjoint.
\end{lemma}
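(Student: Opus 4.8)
The plan is to build each $\tilde{f}_i$ by extending $f_i$ to $M$ as cheaply as possible and then clamping the extension by a suitable nonnegative Lipschitz ``tent'' $\delta_i$, chosen so that the open sets $V_i:=\{\delta_i>0\}$ are automatically pairwise disjoint in $M$. Since the construction is carried out index by index, the cardinality of $I$ plays no role. We may assume each $U_i\neq\emptyset$ (otherwise $f_i\equiv 0$ and we take $\tilde{f}_i=0$), and we note $0\in N\setminus U_i$ since $f_i(0)=0$. For each $i$, the McShane--Whitney theorem provides an extension $g_i\in\Lip_0(M)$ of $f_i$ with $\Lip(g_i)\leq 1$ and $g_i(0)=0$.

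The crux is the choice
$$ \delta_i(x):=\big(d(x,N\setminus U_i)-d(x,U_i)\big)^+, \qquad x\in M, $$
where $d(x,U_i):=\inf_{y\in U_i}d(x,y)$. Being built from the $1$-Lipschitz functions $d(\cdot,N\setminus U_i)$ and $d(\cdot,U_i)$, the function $\delta_i$ is $2$-Lipschitz and nonnegative, with $\{\delta_i>0\}=V_i:=\{x\in M:d(x,U_i)<d(x,N\setminus U_i)\}$; this open set contains $U_i$, because for $x\in U_i$ one has $d(x,U_i)=0<d(x,N\setminus U_i)$ ($U_i$ being open in $N$). The key observation is that the $V_i$ are pairwise disjoint: if $z\in V_i\cap V_j$ with $i\neq j$, then $U_j\subseteq N\setminus U_i$ and $U_i\subseteq N\setminus U_j$ give
$$ d(z,U_i)<d(z,N\setminus U_i)\leq d(z,U_j)<d(z,N\setminus U_j)\leq d(z,U_i), $$
a contradiction.

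I would then set $\tilde{f}_i:=\max\big\{-\delta_i,\,\min\{g_i,\delta_i\}\big\}$, the pointwise clamp of $g_i$ into the band $[-\delta_i,\delta_i]$. As a max/min of the $1$-Lipschitz function $g_i$ and the $2$-Lipschitz functions $\pm\delta_i$, it is $2$-Lipschitz, and $\tilde{f}_i(0)=0$ since $g_i(0)=0\in[-\delta_i(0),\delta_i(0)]$; hence $\tilde{f}_i\in 2B_{\Lip_0(M)}$. That $\tilde{f}_i\restricted_N=f_i$ is immediate: on $N\setminus U_i$ we have $g_i=f_i=0$, so $\tilde{f}_i=0$; on $U_i$ we have $\delta_i(x)=d(x,N\setminus U_i)\geq|f_i(x)|=|g_i(x)|$ (because $f_i$ is $1$-Lipschitz and vanishes on $N\setminus U_i$), so the clamp is inactive and $\tilde{f}_i(x)=g_i(x)=f_i(x)$. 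Finally, $\tilde{f}_i(x)\neq 0$ forces $\delta_i(x)>0$, i.e.\ $x\in V_i$, so the supports $\{\tilde{f}_i\neq 0\}$ are pairwise disjoint.

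The only real difficulty lies in finding $\delta_i$: one needs disjoint open ``collars'' of the $U_i$ in $M$ carrying Lipschitz weights that are still large enough over each $U_i$ to recover $f_i$ after clamping, and comparing $d(\cdot,U_i)$ with $d(\cdot,N\setminus U_i)$ achieves both at once. This is also exactly where the factor $2$ enters, and it cannot be dispensed with: if $x\in U_i$ and $y\in U_j$ are close with $|f_i(x)|\approx d(x,y)\approx|f_j(y)|$, any $1$-Lipschitz extensions with disjoint supports would both be positive near a midpoint of $x$ and $y$. Once $\delta_i$ is in place, every remaining verification follows from the stability of the Lipschitz constant under $\min$ and $\max$.
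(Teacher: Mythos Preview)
Your proof is correct and takes a route that is close in spirit to the paper's but differs in the actual construction. The paper chooses the collar $\tilde{U}_i=\bigcup_{x\in U_i}B^O\bigl(x,\tfrac12 d(x,N\setminus U_i)\bigr)$, defines the extension to be $f_i$ on $U_i$ and $0$ on $M\setminus\tilde{U}_i$, checks directly that this partial map is $2$-Lipschitz, and then invokes McShane--Whitney to fill in the gap $\tilde{U}_i\setminus U_i$. You instead extend first by McShane--Whitney to get $g_i$, then produce an explicit $2$-Lipschitz tent $\delta_i=(d(\cdot,N\setminus U_i)-d(\cdot,U_i))^+$ and clamp $g_i$ into $[-\delta_i,\delta_i]$; your collar $V_i=\{\delta_i>0\}$ is in fact slightly larger than the paper's $\tilde{U}_i$. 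The trade-off is that the paper's disjointness argument is a bare-hands ball computation while yours is a one-line inequality chain, and your final $\tilde{f}_i$ is given by a closed formula rather than an abstract extension. Both arrive at the same constant $2$ for the same reason.
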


\begin{proof}
    We may assume that all sets $U_i$ are non-empty, otherwise $f_i=0$ and we may take $\tilde{f}_i=0$. Since they are pairwise disjoint, we may also assume that $N\setminus U_i$ are non-empty.
    Define
    $$
    \tilde{U}_i = \bigcup_{x \in U_i} B^O\pare{x,\frac{d(x,N\setminus U_i)}{2}} ,
    $$
    where the balls are taken in $M$.
    Clearly the sets $(\tilde{U}_i)_{i\in I}$ are open, and moreover they are pairwise disjoint. Indeed, suppose $p\in\tilde{U}_i\cap\tilde{U}_j$ for $i\neq j\in I$. Then there exist $x\in U_i$, $y\in U_j$ such that
    $$
    p \in B^O\pare{x,\frac{d(x,N\setminus U_i)}2} \cap B^O\pare{y,\frac{d(y,N\setminus U_j)}2}
    $$
    therefore
    $$
    d(x,y) \leq d(x,p)+d(p,y) < \frac{d(x,N\setminus U_i)}2 + \frac{d(y,N\setminus U_j)}2 \leq \frac{d(x,y)}2 + \frac{d(y,x)}2 ,
    $$
    a contradiction.
    
    We define $\tilde{f}_i$ on $U_i\cup M\setminus \tilde{U}_i$ as
    \[
    \tilde{f}_i(x):=\begin{cases}
        f_i(x)& \mbox{ if } x\in U_i\\
        0& \mbox{ if } x \in M\setminus \tilde{U}_i.
    \end{cases}
    \]
    Notice that $N\setminus U_i\subset M\setminus \tilde{U}_i$ and $\tilde{f}_i\restriction_N=f_i$.
    We now check that $\|\tilde{f}_i\|_L\leq 2$.
    Let $x,y \in U_i \cup M\setminus \tilde{U}_i$.
    It is easy to see that we only need to check the situation when $x\in U_i$ and $y\in M\setminus (N\cup \tilde{U}_i)$.
    Fix $\varepsilon>0$ and $z\in N\setminus U_i$ such that $d(x,z)\leq (1+\varepsilon)d(x,N\setminus U_i)$.
    Then we have:
    \[
    \abs{\tilde{f}_i(x)-\tilde{f}_i(y)}=\abs{f_i(x)-f_i(z)}\leq d(x,z)
    \leq (1+\varepsilon) d(x,N\setminus U_i)\leq 2(1+\varepsilon) d(x,y).
    \]
    Letting $\varepsilon\to 0$ yields our claim.
    Now extend $\tilde{f}_i$ to $M$ using the McShane--Whitney theorem. This ends the proof, as $\tilde{f}_i$ vanishes outside of $\tilde{U}_i$.
\end{proof}

\begin{proposition}\label{p:URStableFreeSubspaces}
    Let $M$ be complete and $N \subset M$ be closed. Then a set $W \subset \Free(N)$ is uniformly regular with respect to $\Free(M)$ if and only if it is uniformly regular with respect to $\Free(N)$.
\end{proposition}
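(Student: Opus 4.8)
The plan is to route everything through the sequential characterisation of uniform regularity from Theorem~\ref{thmURequiv}. Since $N$ is closed in the complete space $M$ it is itself complete, so Theorem~\ref{thmURequiv} applies to both $\Free(N)$ and $\Free(M)$; recall moreover that $\Free(N)$ sits isometrically in $\Free(M)$ as $\Free_M(N)$, so boundedness of $W$ and the functional evaluations $\duality{f,\gamma}$ for $\gamma\in W$ are unambiguous. The forward implication is the easy one, already indicated before the statement: given an open $U'\subset M$ and $\ep>0$, apply uniform regularity in $\Free(N)$ to the open set $V:=N\cap U'$ to get a compact $K\subset V\subset U'$ with $W\subset\Free_N(K\cup(N\setminus V))+\ep B_{\Free(N)}$; since $N\setminus V=N\setminus U'\subset M\setminus U'$, one has $\Free_N(K\cup(N\setminus V))\subset\Free_M(K\cup(M\setminus U'))$, hence $W\subset\Free_M(K\cup(M\setminus U'))+\ep B_{\Free(M)}$.

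For the nontrivial direction, assume $W\subset\Free(N)$ is uniformly regular with respect to $\Free(M)$, and verify that $W$ satisfies condition $(iii)$ of Theorem~\ref{thmURequiv} inside $\Free(N)$. Let $(f_n)\subset B_{\Lip_0(N)}$ be such that the sets $U_n=\set{x\in N:f_n(x)\neq 0}$ are pairwise disjoint. By Lemma~\ref{l:DisjointExtension} there are extensions $\tilde f_n\in 2B_{\Lip_0(M)}$ with $\tilde f_n\restriction_N=f_n$ whose supports $\set{x\in M:\tilde f_n(x)\neq 0}$ are pairwise disjoint in $M$. Setting $g_n:=\tfrac12\tilde f_n\in B_{\Lip_0(M)}$, which still have pairwise disjoint supports, the implication $(i)\Rightarrow(iii)$ of Theorem~\ref{thmURequiv} applied in $\Free(M)$ gives $\lim_n\sup_{\gamma\in W}\abs{\duality{g_n,\gamma}}=0$.

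To conclude, note that for $\gamma\in W\subset\Free(N)$ one has $\duality{\tilde f_n,\gamma}=\duality{f_n,\gamma}$: this is immediate when $\gamma$ is finitely supported in $N$, since $\tilde f_n$ agrees with $f_n$ on $N$, and it extends to all of $\Free(N)$ by density of finitely supported elements and continuity. Hence $\abs{\duality{f_n,\gamma}}=2\abs{\duality{g_n,\gamma}}$, so $\lim_n\sup_{\gamma\in W}\abs{\duality{f_n,\gamma}}=0$, which is precisely condition $(iii)$ for $W$ in $\Free(N)$. By Theorem~\ref{thmURequiv} (this time $(iii)\Rightarrow(i)$, valid since $N$ is complete) $W$ is uniformly regular with respect to $\Free(N)$.

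In this scheme the genuinely delicate point is already isolated in Lemma~\ref{l:DisjointExtension}: extending the $f_n$ from $N$ to $M$ must simultaneously preserve a uniform multiple of the Lipschitz norm and keep the supports pairwise disjoint, and that lemma delivers exactly this at the harmless cost of the factor $2$. Once it is in hand the reduction above is routine, with the only remaining bookkeeping being the inclusion $\Free_N(\,\cdot\,)\subset\Free_M(\,\cdot\,)$ and the fact that the norm of $\Free(M)$ restricts to the norm of $\Free(N)$.
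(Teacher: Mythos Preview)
Your proof is correct and follows essentially the same route as the paper: the nontrivial direction is handled by invoking Lemma~\ref{l:DisjointExtension} to extend the disjointly supported $(f_n)\subset B_{\Lip_0(N)}$ to $2B_{\Lip_0(M)}$ and then applying Theorem~\ref{thmURequiv}(iii) in both spaces, exactly as in the paper. The only cosmetic differences are that you spell out the easy direction directly (the paper cites Lemma~\ref{l:StabilityUnderFHat} instead) and that you normalise $g_n=\tfrac12\tilde f_n$ to land in $B_{\Lip_0(M)}$ before invoking condition~(iii), which is arguably tidier.
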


\begin{proof}
The backward implication is a particular case of Lemma~\ref{l:StabilityUnderFHat}. 
For the converse, suppose that $W$ is uniformly regular with $\Free(M)$ as the ambient space. We may assume $0\in N$. Let $(f_n)$ be a sequence in $B_{\Lip_0(N)}$ such that the sets $\set{x\in M:f_n(x)\neq 0}$ are pairwise disjoint. Use Lemma \ref{l:DisjointExtension} to find a sequence of extensions $(\tilde{f}_n)$ in $2B_{\Lip_0(M)}$ with the same disjointness property. By hypothesis and Theorem \ref{thmURequiv}~(iii), $\<\tilde{f}_n,\gamma\>$ converges to $0$ uniformly on $\gamma\in W$. Since $W\subset\lipfree{N}$, we have that $\<\tilde{f}_n,\gamma\>=\<\tilde{f}_n\restricted_N,\gamma\>=\<f_n,\gamma\>$ also converges to $0$ uniformly. Thus, another application of Theorem \ref{thmURequiv}~(iii) shows that $W$ is uniformly regular in $\Free(N)$.
\end{proof}

\section{Uniform regularity versus weak compactness} \label{SectionURnotWC}

To start, we derive Theorem~\ref{thmA} as an immediate corollary of Theorem~\ref{thmURequiv}.

\begin{corollary} \label{thmVstarUR}
Let $M$ be a complete metric space. If $W\subset\lipfree{M}$ is a V*-set, in particular if it is weakly compact, then it is uniformly regular.
\end{corollary}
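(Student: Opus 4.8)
The plan is to deduce Corollary~\ref{thmVstarUR} from the characterization of uniform regularity provided by Theorem~\ref{thmURequiv}, specifically condition $(iii)$. So let $W \subset \lipfree{M}$ be a V*-set and let $(f_n)$ be a sequence in $B_{\Lip_0(M)}$ such that the sets $U_n = \set{x \in M : f_n(x) \neq 0}$ are pairwise disjoint. I must show $\sup_{\gamma \in W}\abs{\duality{f_n,\gamma}} \to 0$, and then Theorem~\ref{thmURequiv} immediately gives that $W$ is uniformly regular.

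\textbf{Key step: the series $\sum f_n$ is WUC in $\Lip_0(M) = \lipfree{M}^*$.} This is where the disjointness of the supports is used. Given $\gamma \in \lipfree{M}$, I want $\sum_n \abs{\duality{f_n,\gamma}} < \infty$. The natural approach is to first reduce to finitely supported $\gamma$ (these are dense, and the partial sums $\sum_{n \le N} \varepsilon_n f_n$ stay in $C \cdot B_{\Lip_0(M)}$ for an absolute constant $C$ since the $f_n$ have disjoint supports, so the functionals $\gamma \mapsto \sum_{n\le N}\abs{\duality{f_n,\gamma}}$ are uniformly bounded and a density argument applies). For finitely supported $\gamma = \sum_{j=1}^k b_j \delta(x_j)$, only those $n$ with some $x_j \in U_n$ can contribute, and for each $j$ there is at most one such $n$ by disjointness; hence the sum over $n$ reduces to a finite sum. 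Actually the cleanest way: for any choice of signs $\theta_n \in \{-1,1\}$ and any $N$, the function $g = \sum_{n=1}^N \theta_n f_n$ lies in $B_{\Lip_0(M)}$ up to a universal constant — one needs that a sum of $1$-Lipschitz functions with pairwise disjoint (open) supports is Lipschitz with a controlled constant; this is essentially the content of Lemma~\ref{l:DisjointExtension}-type reasoning, and in fact $g$ is $2$-Lipschitz or so. Then $\sum_{n=1}^N \abs{\duality{f_n,\gamma}} = \duality{g,\gamma} \le 2\norm{\gamma}$ with the right sign choice, uniformly in $N$, giving WUC.

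\textbf{Conclusion from the V*-property.} Once $\sum f_n$ is WUC in $\lipfree{M}^*$, the definition of V*-set directly yields $\lim_n \sup_{\gamma \in W}\abs{\duality{f_n,\gamma}} = 0$, which is exactly condition $(iii)$ of Theorem~\ref{thmURequiv}. Therefore $W$ is uniformly regular. The ``in particular'' clause is immediate since every relatively weakly compact set (indeed every weakly precompact set) is a V*-set, as recorded in Subsection~\ref{section:Defs}.

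\textbf{Main obstacle.} The only real point requiring care is establishing the Lipschitz bound on $\sum_{n=1}^N \theta_n f_n$ uniformly in $N$ and the sign pattern — i.e., that functions in $B_{\Lip_0(M)}$ with pairwise disjoint supports can be ``glued'' additively with a universal loss in the Lipschitz constant. For two points $x \in U_n$, $y \in U_m$ with $n \neq m$, one estimates $\abs{g(x) - g(y)} \le \abs{f_n(x)} + \abs{f_m(y)} = \abs{f_n(x) - f_n(z)} + \abs{f_m(y) - f_m(w)}$ for suitable $z \in M \setminus U_n$, $w \in M \setminus U_m$ near $x$, $y$ respectively, and bounds this by $d(x,z) + d(y,w) \lesssim d(x,y)$; the case $x,y$ in the same $U_n$ or both outside all supports is trivial. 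This is routine but is the crux; everything else is bookkeeping and invoking Theorem~\ref{thmURequiv}.
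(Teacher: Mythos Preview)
Your proposal is correct and follows essentially the same route as the paper: verify condition $(iii)$ of Theorem~\ref{thmURequiv} by showing that $\sum_n f_n$ is WUC in $\Lip_0(M)$, then invoke the V*-property. The paper outsources the WUC claim to \cite[Lemma~6]{AP_normality}, whereas you sketch it directly; note that your Lipschitz estimate for $g=\sum\theta_n f_n$ in the cross-support case can be simplified by taking $z=y$ and $w=x$ (since $y\notin U_n$ and $x\notin U_m$), giving $\abs{g(x)-g(y)}\le\abs{f_n(x)-f_n(y)}+\abs{f_m(y)-f_m(x)}\le 2d(x,y)$ immediately.
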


\begin{proof}
We show that $W$ satisfies condition $(iii)$ in Theorem \ref{thmURequiv}. Let $(f_n)$ be a sequence in $B_{\Lip_0(M)}$ such that the sets $U_n=\set{x\in M:f_n(x)\neq 0}$ are pairwise disjoint. Then $\sum_n f_n$ is a WUC series in $\Lip_0(M)$ by \cite[Lemma 6]{AP_normality} (see also the proof of \cite[Lemma 1.5]{CCGMR}). Since $W$ is a V*-set, we have $\lim_n\sup_{\gamma\in W}\abs{\duality{f_n,\gamma}}=0$ as required.
\end{proof}

This argument highlights nicely the difference between uniformly regular sets and V*-sets: $W$ is a V*-set if $\lim_{n}\sup_{\gamma\in W}\abs{\duality{f_k,\gamma}}=0$ for all WUC series $\sum_n f_n$ in $\Lip_0(M)$, while $W$ is uniformly regular if the same holds for WUC series which moreover satisfy that the sets $\set{f_n\neq 0}$ are pairwise disjoint.

This points to the fact that V*-sets and uniformly regular sets may not coincide in general. Although it is not difficult to find a set $W \subset \F(M)$ which is tight but not V* (or not weakly compact), it is more challenging to identify a counterexample in the case of uniform regularity. Our next aim is to address this challenge. To begin, we present a technical lemma.

\begin{lemma}\label{lem_distance_to_element}
	Let $M$ be a metric space, and let $U\subseteq M$ be open. If $\varepsilon\in(0,1)$ and $\gamma=\frac{1}{n}\sum_{i=1}^nm_{u_iv_i}\in\lipfree{M}$ are such that 
	$\dist(\gamma, \lipfree{U^c})>\varepsilon$,
	then 
	\[\big|\big\{i\in\{1,\ldots,n\}: B(u_i,r_i)\subseteq U \text{ or } B(v_i,r_i)\subseteq U\big\}\big| > \frac{n\varepsilon}{2},\]
    where $r_i=\frac{\varepsilon}{8}d(u_i,v_i)$, $i\in\{1,\ldots,n\}$.
\end{lemma}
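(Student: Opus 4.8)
The strategy is to argue by contraposition: assuming the index set $J := \{i : B(u_i,r_i)\subseteq U \text{ or } B(v_i,r_i)\subseteq U\}$ has cardinality at most $n\varepsilon/2$, I will construct an element $\nu\in\lipfree{U^c}$ with $\|\gamma-\nu\|\leq\varepsilon$, contradicting $\dist(\gamma,\lipfree{U^c})>\varepsilon$. The point is that for each index $i\notin J$, neither $u_i$ nor $v_i$ is ``deep inside'' $U$ at scale $r_i$, so one can move both points to $U^c$ without paying too much in norm.

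The heart of the construction is a per-molecule replacement. Fix $i\notin J$. Then $B(u_i,r_i)\not\subseteq U$, so $d(u_i,U^c)\leq r_i = \frac{\varepsilon}{8}d(u_i,v_i)$, and similarly $d(v_i,U^c)\leq \frac{\varepsilon}{8}d(u_i,v_i)$. Choose $u_i'\in U^c$ with $d(u_i,u_i')\leq 2r_i$ and $v_i'\in U^c$ with $d(v_i,v_i')\leq 2r_i$ (if $u_i$ or $v_i$ already lies in $U^c$, take it unchanged). Then I replace $m_{u_iv_i}$ by $m_{u_i'v_i'}$ when $u_i'\neq v_i'$, and by $0$ otherwise. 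The triangle inequality gives $d(u_i',v_i')\geq d(u_i,v_i) - 4r_i = (1-\tfrac{\varepsilon}{2})d(u_i,v_i)$, which is positive since $\varepsilon<1$, and similarly $d(u_i',v_i')\leq(1+\tfrac{\varepsilon}{2})d(u_i,v_i)$. A standard estimate for the difference of two molecules,
\[
\|m_{ab}-m_{a'b'}\| \leq \frac{d(a,a')+d(b,b')}{d(a',b')} + \frac{|d(a,b)-d(a',b')|}{d(a,b)} \leq \frac{2\big(d(a,a')+d(b,b')\big)}{d(a',b')},
\]
together with $d(u_i,u_i')+d(v_i,v_i')\leq 4r_i=\tfrac{\varepsilon}{2}d(u_i,v_i)$ and $d(u_i',v_i')\geq(1-\tfrac\varepsilon2)d(u_i,v_i)\geq\tfrac12 d(u_i,v_i)$, yields $\|m_{u_iv_i}-m_{u_i'v_i'}\|\leq 2\varepsilon$. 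Hence setting $\nu := \frac1n\sum_{i\notin J} m_{u_i'v_i'}$ (with vanishing terms dropped), we get $\nu\in\lipfree{U^c}$ and
\[
\|\gamma-\nu\| \leq \frac1n\sum_{i\in J}\|m_{u_iv_i}\| + \frac1n\sum_{i\notin J}\|m_{u_iv_i}-m_{u_i'v_i'}\| \leq \frac{|J|}{n} + 2\varepsilon \leq \frac{\varepsilon}{2} + 2\varepsilon,
\]
so this crude bound gives $\|\gamma-\nu\|\leq\frac{5\varepsilon}{2}$, which is not quite $\leq\varepsilon$; I will need to be more careful with the constants.

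The fix is to track the constants more tightly or to tune the radius. Replacing $r_i=\tfrac{\varepsilon}{8}d(u_i,v_i)$ as stated, and choosing $u_i',v_i'$ with $d(u_i,u_i')\leq(1+\eta)r_i$ for small $\eta$, one gets $d(u_i,u_i')+d(v_i,v_i')\leq 2(1+\eta)r_i=\tfrac{(1+\eta)\varepsilon}{4}d(u_i,v_i)$, hence $d(u_i',v_i')\geq(1-\tfrac{(1+\eta)\varepsilon}{4})d(u_i,v_i)\geq\tfrac23 d(u_i,v_i)$ for $\varepsilon$, $\eta$ small, and then $\|m_{u_iv_i}-m_{u_i'v_i'}\|\leq \frac{2\cdot\frac{(1+\eta)\varepsilon}{4}}{2/3}=\frac{3(1+\eta)\varepsilon}{4}<\varepsilon$ after letting $\eta\to0$; combined with $|J|/n\leq\varepsilon/2$ this still overshoots. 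The clean way out, which I expect to be the intended one, is to split the contrapositive differently: assume instead $|J|\leq\frac{n\varepsilon}{2}$ and use that the \emph{majority} of molecules (those with $i\notin J$) can be perturbed into $\lipfree{U^c}$ at cost at most a small multiple of $\varepsilon$ per molecule while the $J$-molecules contribute total norm at most $\varepsilon/2$; then rescale the threshold in the conclusion, or equivalently apply the lemma with $\varepsilon$ replaced by a suitable constant multiple. The main obstacle is therefore purely bookkeeping: choosing the radius $r_i$ and the target points $u_i',v_i'$ so that the two error contributions (the discarded $J$-part, bounded by $|J|/n$, and the perturbation of each non-$J$ molecule, bounded via the molecule-difference estimate) sum to something strictly below $\varepsilon$ — I would get this by re-examining whether the ``$>n\varepsilon/2$'' in the statement should be paired with the specific constant $\tfrac{\varepsilon}{8}$ in $r_i$, and adjusting one estimate (likely showing $\|m_{u_iv_i}-m_{u_i'v_i'}\|\leq\tfrac{\varepsilon}{2}$ with a sharper denominator bound $d(u_i',v_i')\geq(1-\tfrac\varepsilon2)d(u_i,v_i)$ and numerator $\leq\tfrac{\varepsilon}{2}d(u_i,v_i)$, i.e. $\leq\frac{\varepsilon/2}{1-\varepsilon/2}\cdot\frac{d(u_i,v_i)}{d(u_i,v_i)}$, which is $\leq\varepsilon$ only for $\varepsilon\leq2/3$). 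No genuinely new idea is needed beyond the molecule-relocation estimate; everything else is the triangle inequality and counting.
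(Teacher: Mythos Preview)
Your approach is exactly the paper's: replace each ``shallow'' molecule by one supported in $U^c$, discard the ``deep'' ones, and compare. The gap is purely in two bookkeeping choices, and once those are corrected the constants close immediately.

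First, the factor of $2$ in your choice of $u_i',v_i'$ is unnecessary. Recall that $B(u_i,r_i)$ is the \emph{closed} ball; the hypothesis $B(u_i,r_i)\not\subseteq U$ means $B(u_i,r_i)\cap U^c\neq\emptyset$, so you may take $x_i\in B(u_i,r_i)\cap U^c$ directly, giving $d(u_i,x_i)\leq r_i$ (not $2r_i$), and likewise $y_i\in B(v_i,r_i)\cap U^c$ with $d(v_i,y_i)\leq r_i$.

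Second, your molecule-difference estimate is suboptimal because you put the \emph{perturbed} distance $d(u_i',v_i')$ in the denominator. The sharper bound (this is \cite[Lemma~1.2]{VeeorgStudia}, and what the paper uses) has the \emph{maximum} of the two distances downstairs:
\[
\|m_{u_iv_i}-m_{x_iy_i}\| \leq \frac{d(u_i,x_i)+d(v_i,y_i)+|d(u_i,v_i)-d(x_i,y_i)|}{\max\{d(u_i,v_i),d(x_i,y_i)\}} \leq \frac{r_i+r_i+2r_i}{d(u_i,v_i)} = \frac{4r_i}{d(u_i,v_i)} = \frac{\varepsilon}{2}.
\]
Now set $\nu=\frac{1}{n}\sum_{i\notin J}m_{x_iy_i}\in\lipfree{U^c}$ and you get
\[
\varepsilon < \|\gamma-\nu\| \leq \frac{|J|}{n} + \frac{1}{n}\sum_{i\notin J}\frac{\varepsilon}{2} \leq \frac{|J|}{n} + \frac{\varepsilon}{2},
\]
hence $|J|>n\varepsilon/2$, which is exactly the conclusion. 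No rescaling of $\varepsilon$, no restriction to $\varepsilon\leq 2/3$, and no further tuning of $r_i$ is needed.
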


\begin{proof}
	Denote 
	\[I=\big\{i\in\{1,\ldots,n\}: B(u_i,r_i)\subseteq U \text{ or } B(v_i,r_i)\subseteq U\big\}.\]
	Note that for every $i\in \{1,\ldots,n\}\setminus I$, there exist $x_i\in B(u_i,r_i)\cap U^c$ and $y_i\in B(v_i,r_i)\cap U^c$. Thus, by \cite[Lemma~1.2]{VeeorgStudia}: 
	\begin{align*}
		\|m_{u_iv_i}-m_{x_iy_i}\|&=\frac{d(u_i,x_i)+d(v_i,y_i)+|d(u_i,v_i)-d(x_i,y_i)|}{\max\{d(u_i,v_i),d(x_i,y_i)\}}\\
		&\le \frac{d(u_i,x_i)+d(v_i,y_i)+d(u_i,x_i)+d(v_i,y_i)}{d(u_i,v_i)}\\
		&\le \frac{4r_i}{d(u_i,v_i)}=\frac{\varepsilon}{2}.
	\end{align*}
	Let $\nu=\frac{1}{n}\sum_{i\notin I}m_{x_iy_i}$. Clearly $\nu\in \lipfree{U^c}$ and 
    $$
    \varepsilon < \|\gamma-\nu\|\le\frac{1}{n}\sum_{i\in I}\|m_{u_iv_i}\|+\frac{1}{n}\sum_{i\notin I}\|m_{u_iv_i}-m_{x_iy_i}\| \le \frac{|I|}{n}+\frac{\varepsilon}{2}
    $$
    therefore $|I|>n\varepsilon/2$.
\end{proof}

\begin{proposition} \label{prop:PerfectURnotwpreCPT}
    Let $M$ be a complete metric space containing a non-empty perfect subset. Then $\Free(M)$ contains a subset that is uniformly regular but not a V*-set. 
\end{proposition}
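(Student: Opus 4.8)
The plan is to build the set $W$ explicitly out of convex sums of molecules supported on a perfect set, and then combine the machinery of Theorem~\ref{thmURequiv} with Lemma~\ref{lem_distance_to_element} to show uniform regularity, while using Theorem~\ref{thmD} to rule out the V*-property. First I would invoke the fact that a non-empty perfect subset of a complete metric space contains a bi-Lipschitz copy — or at least a reasonably controlled copy — of the Cantor set, or more simply just work directly inside the perfect set $P$ itself. The idea is to choose, for each $n$, finitely many pairs $(u_i^n, v_i^n)$ of distinct points of $P$ with $d(u_i^n, v_i^n)$ tiny and with the pairs $2^{-n}$-separated from one another in an appropriate sense (this is where perfectness is used: near any point we can keep finding new nearby pairs), set $\gamma_n = \frac1{k_n}\sum_{i=1}^{k_n} m_{u_i^n v_i^n}$, and let $W = \{\gamma_n : n \in \N\}$, or perhaps $W$ equal to the closed convex hull of such a sequence if convexity is needed (it is stable under uniform regularity by Proposition~\ref{p:URbasic}(3)).

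For the \emph{not a V*-set} direction, I would arrange that $\sum_i d(u_i^n, v_i^n) \to 0$ as $n \to \infty$ — easy to guarantee by taking the diameters of the pairs at level $n$ summably small. Then Theorem~\ref{thmD} gives a subsequence of $(\gamma_n)$ equivalent to the $\ell_1$-basis, hence $(\gamma_n)$ has no weakly Cauchy subsequence, so $W$ is not weakly precompact. But one needs \emph{not a V*-set}, which is formally stronger. Here I would either (i) note that since $\Free(M)$ over a metric space with a perfect subset need not be wsc, I must argue directly, by exhibiting a WUC series $\sum f_k$ in $\Lip_0(M)$ with $\sup_{w \in W}|\langle f_k, w\rangle| \not\to 0$; the natural candidate is a telescoping/bump construction peaked near the pairs at level $k$, or (ii) arrange the combinatorics so that the $\gamma_n$ themselves have pairwise "disjoint-ish" witnessing functionals, making the relevant series WUC almost by the same computation as in Corollary~\ref{thmVstarUR}. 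Option (ii) feels cleanest: if the supports of the $\gamma_n$ can be taken pairwise disjoint (again using perfectness to spread them out), then there are $f_n \in B_{\Lip_0(M)}$ with disjoint supports and $\langle f_n, \gamma_n\rangle = 1$, so $\sum f_n$ is WUC by \cite[Lemma 6]{AP_normality} and $W$ fails the V*-definition.

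For the \emph{uniformly regular} direction I would verify condition $(iii)$ of Theorem~\ref{thmURequiv}: given $(g_k) \subset B_{\Lip_0(M)}$ with pairwise disjoint supports $V_k = \{g_k \neq 0\}$, I must show $\sup_{w\in W}|\langle g_k, w\rangle| \to 0$. Fix $\varepsilon > 0$. If $\langle g_k, \gamma_n \rangle > \varepsilon$ then $\dist(\gamma_n, \Free(V_k^c)) > \varepsilon$, so Lemma~\ref{lem_distance_to_element} forces at least $\frac{k_n \varepsilon}{2}$ of the pairs at level $n$ to have one endpoint whose $\frac{\varepsilon}{8}d(u_i^n,v_i^n)$-ball lies inside $V_k$. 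The punchline is a counting/pigeonhole argument: since the $V_k$ are pairwise disjoint, a fixed molecule $m_{u_i^n v_i^n}$ can "charge" at most one $V_k$ in this way, so for each $n$ the number of indices $k$ for which $\langle g_k, \gamma_n\rangle > \varepsilon$ is at most $\frac{2}{\varepsilon}$ — a bound independent of $n$. Therefore only finitely many $k$ can have $\sup_{w\in W}|\langle g_k,w\rangle| > \varepsilon$, after one checks the same bound survives passing to $\conv(W)$ and its closure (convex combinations and norm limits, using Proposition~\ref{p:URbasic}).

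**Main obstacle.** The delicate point is the interplay between the two requirements on the geometry of the pairs $(u_i^n, v_i^n)$: the diameters must shrink fast enough for Theorem~\ref{thmD} to apply and for the counting bound via Lemma~\ref{lem_distance_to_element} to be uniform in $n$, while the pairs (and ideally the whole supports of the $\gamma_n$) must simultaneously be spread out enough across the perfect set that we can produce disjointly-supported norming functionals to defeat the V*-property. Threading both conditions is exactly where the hypothesis "$M$ contains a non-empty perfect subset" is essential — perfectness is what lets us keep finding fresh, well-separated nearby pairs at every scale — and getting the bookkeeping right (in particular ensuring the counting constant $\tfrac{2}{\varepsilon}$ does not secretly depend on $n$ through the cardinalities $k_n$) will be the technical heart of the argument.
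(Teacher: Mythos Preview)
There is a genuine gap in your counting argument for uniform regularity. From Lemma~\ref{lem_distance_to_element} you correctly get that for each \emph{fixed} $n$, at most $C/\varepsilon$ indices $k$ satisfy $|\langle g_k,\gamma_n\rangle|>\varepsilon$. But this bounds the columns of the array $(\langle g_k,\gamma_n\rangle)_{k,n}$, whereas condition $(iii)$ of Theorem~\ref{thmURequiv} requires a row bound: only finitely many $k$ with $\sup_n|\langle g_k,\gamma_n\rangle|>\varepsilon$. A uniform column bound does not give this --- nothing prevents each $\gamma_n$ from charging a \emph{different} $V_k$. In fact, your option (ii) for defeating the V*-property makes this worst case occur: if the supports of the $\gamma_n$ are pairwise well-separated and $f_n\in B_{\Lip_0(M)}$ are the disjointly supported norming functionals you build, then $\sup_m|\langle f_n,\gamma_m\rangle|\geq\langle f_n,\gamma_n\rangle=1$ for every $n$, so the very sequence $(f_n)$ that witnesses V*-failure also witnesses the failure of Theorem~\ref{thmURequiv}(iii). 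The two halves of your plan are incompatible as stated.

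What makes the counting work in the paper is a \emph{nested} Cantor scheme $(A_s)_{s\in\{0,1\}^{<\N}}$ inside the perfect set, with $\gamma_n$ built from the molecules $m_{u_{s^\smallfrown 1}u_{s^\smallfrown 0}}$, $s\in\{0,1\}^{n-1}$, where $u_{s^\smallfrown i}\in A_{s^\smallfrown i}$. The point is that the molecules at level $n+1$ sit inside the pieces $A_{s^\smallfrown i}$ arising at level $n$; so if a ball around $u_{s^\smallfrown i}$ lies in $V_k$, then (after skipping a fixed number of bits) all the Cantor pieces $A_t$ with $t$ extending $s^\smallfrown i$ lie in $V_k$ too. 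Passing to a single deep level $N$, each bad $V_k$ therefore contains a fixed positive fraction of the $2^N$ pieces at that level, and now disjointness of the $V_k$ genuinely forces finiteness --- the counting accumulates across different $n$. For the V*-failure the paper does not exhibit a WUC series in $\Lip_0(M)$ at all: it builds a bi-Lipschitz map $\varphi$ from the scheme onto a parallel Cantor set in $[0,1]$, observes that the images $\hat\varphi(\gamma_n)$ in $\Free([0,1])\equiv L^1$ are disjointly supported (hence an $\ell_1$-basis there), and concludes via the stability of V*-sets under bounded operators together with property (V*) of $L^1$.
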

\begin{proof}
Without loss of generality, we may assume that 
$M$ is a complete perfect metric space.
There exists a non-empty closed perfect subset $C \subset M$ which can be written as $C=\bigcap_n \bigcup S_n$
where $S_n=\set{A_s:s \in \set{0,1}^n}$ and moreover we have for all  $s \in \set{0,1}^{<\N}$ and all $i \in \set{0,1}$,
\begin{enumerate}
    \item $A_s$ is perfect, closed, and non-empty,
    \item $A_{s^\smallfrown i} \subset A_s$,
    \item $\diam(A_{s^\smallfrown i})\leq \frac13\diam(A_s)$,
    \item $d(A_{s^\smallfrown 0},A_{s^\smallfrown 1})\geq \frac13 \diam(A_s)$.
\end{enumerate}
Such a system indeed exists. To see this, recall that the closure of every open subset of a perfect set is perfect. 
Set $A_\emptyset:=\cl{B^O(0,\frac12)}$ and when $A_s$ has been defined take $x_0,x_1 \in A_s$ such that $d(x_0,x_1)>\frac79 \diam(A_s)$, then define $A_{s^\smallfrown i}:=\overline{B^O(x_i,\frac19 \diam(A_s))\cap A_s}$.
The required properties follow easily.

Given $C$ and $\set{S_n}_{n=0}^\infty$ as above, we assume without loss of generality that $\diam(A_\emptyset)=1$. Now we define a system $A'_s$, $s\in\set{0,1}^{<\N}$ of non-degenerate closed subintervals of $[0,1]$ satisfying properties (1)-(4) above and such that $\diam(A'_s)=\diam(A_s)$.
First, let $A_\emptyset'=[0,1]$.
Next, assume that $A_s'$ has been defined and choose $A_{s^\smallfrown 0}'$ and $A_{s^\smallfrown 1}'$ to be placed on the left and right sides of $A'_s$, respectively, i.e.
\begin{align*}
A'_{s^\smallfrown 0} &:= [\min A_s',\min A_s'+\diam(A_{s^\smallfrown 0})] \\
A'_{s^\smallfrown 1} &:= [\max A'_s -\diam(A_{s^\smallfrown 1}),\max A'_s].
\end{align*}
It is clear that they satisfy the required properties as well.
For further reference, let us denote $u_{s^\smallfrown 0}':=\max A_{s^\smallfrown 0}'$ and $u_{s^\smallfrown 1}':=\min A_{s^\smallfrown 1}'$, and notice that the interval $(u'_{s^\smallfrown 0},u'_{s^\smallfrown 1})$ is contained in $A_s'$ but intersects neither $A'_{s^\smallfrown 0}$ nor $A'_{s^\smallfrown 1}$.

We define the generalized Cantor set $C':=\bigcap_n \bigcup S_n'$ where $S'_n=\set{A_s':s\in \set{0,1}^n}$.
To simplify notation, for $s=(s_k)\in \set{0,1}^{\N}$ we write $s_{\restricted n} := (s_1,\ldots , s_n)$.
It is well known, and easily verified, that the above properties imply that for every 
$s=(s_n)\in \{0,1\}^{\mathbb{N}}$ there exist points $x_s$ and $x_s'$ such that 
\[
\{x_s\} = \bigcap_{n} A_{s_{\restricted n}}, 
\qquad 
\{x_s'\} = \bigcap_{n} A_{s_{\restricted n}}',
\]
and moreover this completely describes all the points of $C$ and $C'$, respectively. 
We claim that the map $\varphi : x_s \mapsto x_s'$ is bi-Lipschitz.
Indeed, let $x_s \neq x_t \in C$ be given and let $n \in \N$ be the smallest integer such that $s_{n+1}\neq t_{n+1}$.
Then $x_s', x_t' \in A_{s_{\restricted n}}'$ and therefore $\abs{x_s'-x_t'}\leq \diam(A'_{s_{\restricted n}})=\diam(A_{s_{\restricted n}})$.
On the other hand, $x_s \in A_{s_{\restricted n}^\smallfrown i}$ and $x_t \in A_{s_{\restricted n}^\smallfrown j}$ with $i \neq j$, so that $d(x_s, x_t) \geq \frac{1}{3} \, \mathrm{diam}(A_{s_{\restricted n}})$.
Therefore,
\[
|x_s' - x_t'| \leq 3 \, d(x_s, x_t).
\]
A Lipschitz estimate for the inverse is obtained by interchanging the roles of $C$ and~$C'$.
Notice also that the points $u_s'$ defined above belong to the generalized Cantor set $C'$ and that the corresponding points $u_s=\varphi^{-1}(u_s')$ satisfy $u_{s} \in C\cap A_{s}$.

\medskip

Let now 
$$\gamma_n := \frac{1}{2^{n-1}}\sum_{s \in \set{0,1}^{n-1}} m_{u_{s^\smallfrown 1} u_{s^\smallfrown 0}}\in\F(C) 
\,\mbox{ and }\, 
\gamma_n'=\hat{\varphi}(\gamma_n)\in\F(C').$$
We denote $W:=\set{\gamma_n:n\in \N}$ and $W'=\hat{\varphi}(W)$.
Assume towards a contradiction that $W$ is a V*-set. 
Consider a Lipschitz extension $\psi:M\to [0,1]$ of $\varphi$.
Then $W'=\hat{\varphi}(W)=\hat{\psi}(W)$ (see for example~\cite[Proposition~2.6]{APPP}). 
Since V*-sets are stable under bounded linear operators, $W'$ is a V*-set in $\Free([0,1])=L_1$. 
Since $L_1$ enjoys property (V*), $W$ must be relatively weakly compact. 
But $(\gamma_n')_n$ is equivalent to the $\ell_1$-basis as we will show now.
To do this, we use the usual isometry $T : \delta(x) \in \F([0,1]) \mapsto \indicator{[0,x]} \in L^1([0,1])$. Under this transformation, we have
	$$f_n := T(\gamma_n') = \frac{1}{2^{n-1}}\sum_{s \in \set{0,1}^{n-1}} \frac{\indicator{(u_{s^\smallfrown 0}' u_{s^\smallfrown 1}')}}{d(u_{s^\smallfrown 1},u_{s^\smallfrown 0})}.$$
	By the choice of points $(u'_s)$, it is evident that the maps $f_n$ have disjoint support. 
    Moreover, since $\varphi$ is bi-Lipschitz and since the sets $A_{s}'$ for $s\in \set{0,1}^{n-1}$ are pairwise disjoint, we have that  $\frac13\leq \norm{f_n} \leq 3$, indicating that $(f_n)_n$ is equivalent to the $\ell_1$-basis (see e.g. \cite[Lemma~5.1.1]{AlbiacKalton}).
It follows that $(\gamma_n)$ is also equivalent  to the $\ell_1$-basis, as $\gamma_n'=\hat{\varphi}(\gamma_n)$ and $\hat{\varphi}$ is an isomorphism.
Hence $W$ is not a V*-set.

It remains to be shown that the set $W=\set{\gamma_n:n \in \N}$ is uniformly regular.
Let $(U_n)$ be a sequence of pairwise disjoint open subsets of $M$.
In order to reach contradiction via Theorem \ref{thmURequiv} we assume that \[\displaystyle\limsup_{k \to \infty} \sup_{n\in \N} \dist(\gamma_n,\Free(U^c_k))>\varepsilon\] for some $\varepsilon>0$.
Let $l \in \N$ be such that $3^{-l}<\frac{\varepsilon}{8\cdot 3}$. 
Since finite subsets of $\Free(M)$ are uniformly regular, we may choose sequences $(n_i)\subset \N$ and $(k_i)\subset \N$ such that $n_{i+1}>n_i+l$ and $k_{i+1}>k_i$ and such that $\dist(\gamma_{n_i},\Free(U^c_{k_i}))>\varepsilon$. 
To simplify the notation we write, as we may, 
$\dist(\gamma_{n_k},\Free(U^c_{k}))>\varepsilon$ for every $k\in \N$. 

We now apply Lemma~\ref{lem_distance_to_element} (and the property (4) of the Cantor scheme above)  to obtain for every $k\in \N$ that 
$$
\cardinality{\set{s\in \set{0,1}^{n_k-1}:B(u_{s^\smallfrown 0},r_s) \subset U_k \mbox{ or }B(u_{s^\smallfrown 1},r_s) \subset U_k}}>2^{n_k-2}\cdot \varepsilon
$$
where $r_s=\frac\varepsilon8\frac{\diam(A_s)}3$.
Notice that, since $3^{-l}<\frac\varepsilon{8\cdot 3}$, we have for every $s \in \set{0,1}^{n_k-1}$ and every $t\in \set{0,1}^l$ that 
\[
\diam(A_{s^\smallfrown t})\leq \left(\frac13\right)^l \diam(A_s)\leq \frac\varepsilon8\frac{\diam(A_s)}3=r_s.
\]
It follows that $A_{s^\smallfrown t} \subset U_k$ when 
$B(u_{s^\smallfrown i},r_s) \subset U_k$ and $u_{s^\smallfrown i} \in A_{s^\smallfrown t}$.
That is, for every $k$, at least $2^{n_k-2}\cdot \varepsilon$ different sets $A_s\in S_{n_k-1+l}$ are included in $U_k$.
Further, because of the splitting-in-two-property of the Cantor scheme, for every $m \in \N$,  and every $k$, we will have at least $2^m\cdot 2^{n_k-2}\cdot \varepsilon$ different sets $A_s \in S_{n_k-1+l+m}$ that are included in $U_k$.
Let $K \in \N$. 
For every $k=1,\ldots,K$ we choose $m$ above as $n_K-n_k$.
Thus at least $2^{n_K-n_k}\cdot 2^{n_k-2}\cdot\varepsilon=2^{n_K-2}\cdot \varepsilon$ different sets $A_s\in S_{n_k-1+l+n_K-n_k}=S_{n_K-1+l}$ are included in $U_k$. 
Thus, since the sets $(U_k)$ are pairwise disjoint, $K\cdot 2^{n_K-2}\cdot \varepsilon$ different members of $S_{n_K-1+l}$ are included in $\bigcup_{k=1}^K U_k$.
But the cardinality of $S_{n_K-1+l}$ is $2^{n_K-1+l}$.
Therefore, if $K$ is chosen large enough, the fact that $(U_k)$ are pairwise disjoint leads to a contradiction.
\end{proof}

As the byproduct of the above proof, we can state the following lemma of independent interest, which is probably well known but we could not locate in the literature.
\begin{lemma}
    Let $M$ be a perfect metric space. 
    Then $M$ contains a bi-Lipschitz copy of a perfect subset of $\R$.
\end{lemma}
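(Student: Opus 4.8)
The plan is to extract the bi-Lipschitz copy directly from the construction carried out in the proof of Proposition~\ref{prop:PerfectURnotwpreCPT}. The key observation is that nowhere in that construction did we use the fact that $M$ was complete or pointed beyond convenience: starting from an arbitrary perfect metric space $M$, one can build a Cantor-type scheme $\{A_s\}_{s\in\{0,1\}^{<\N}}$ of non-empty closed perfect subsets of $M$ satisfying properties (1)--(4) there. The existence of such a scheme is guaranteed by the same argument: set $A_\emptyset$ to be the closure of some open ball, and recursively, given $A_s$ (perfect, hence containing two points $x_0, x_1$ with $d(x_0,x_1) > \tfrac79\diam(A_s)$), put $A_{s^\smallfrown i} := \overline{B^O(x_i, \tfrac19\diam(A_s)) \cap A_s}$, which is again perfect since the closure of an open subset of a perfect set is perfect.

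Next, I would mirror this scheme inside $\R$ by the nested intervals $\{A'_s\}$ exactly as in the proposition: $A'_\emptyset = [0,1]$ (after rescaling so $\diam(A_\emptyset)=1$), and $A'_{s^\smallfrown 0}$, $A'_{s^\smallfrown 1}$ placed flush left and flush right inside $A'_s$ with $\diam(A'_s) = \diam(A_s)$. Both schemes have singleton intersections along each branch, yielding points $x_s \in C := \bigcap_n \bigcup S_n$ and $x'_s \in C' := \bigcap_n \bigcup S'_n$, and the correspondence $\varphi: x_s \mapsto x'_s$ is a well-defined bijection $C \to C'$. The bi-Lipschitz estimate is precisely the one already written down: if $n$ is the first index where the branches split, then $|x'_s - x'_t| \le \diam(A'_{s_{\restricted n}}) = \diam(A_{s_{\restricted n}})$ while $d(x_s,x_t) \ge \tfrac13\diam(A_{s_{\restricted n}})$ by property~(4), giving $|x'_s - x'_t| \le 3\,d(x_s,x_t)$, and symmetrically $d(x_s,x_t) \le 3|x'_s-x'_t|$ by swapping the roles of the two schemes. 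Hence $\varphi^{-1}$ is a bi-Lipschitz embedding of the perfect set $C' \subset \R$ into $M$, with image $C$.

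In practice there is essentially no obstacle: this is a genuine "byproduct" statement, so the proof is just a pointer. I would write it as: \emph{Repeat the construction in the proof of Proposition~\ref{prop:PerfectURnotwpreCPT}, starting from $M$ itself in place of the perfect subset $C$; it produces a subset $C \subset M$ and a perfect subset $C' \subset [0,1]$ together with a bi-Lipschitz bijection $C \to C'$, which is the required copy.} If one wants the statement completely self-contained one should also note that $C'$ is perfect --- this follows because $C'$ carries the same combinatorial structure as the full binary tree and properties (3)--(4) ensure that every $x'_s$ is a limit of the distinct points $x'_t$ for branches $t$ agreeing with $s$ on longer and longer initial segments. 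The only point worth a line of care is the reduction "without loss of generality $M$ is perfect": we are only asserting $M$ \emph{contains} a bi-Lipschitz copy, so it suffices to run the argument inside any non-empty perfect closed subset of $M$ --- but in the statement as phrased $M$ is already assumed perfect, so even that is unnecessary.
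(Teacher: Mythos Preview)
Your proposal is correct and matches the paper exactly: the lemma is stated there with no separate proof, simply as a byproduct of the construction in Proposition~\ref{prop:PerfectURnotwpreCPT}. One minor quibble: completeness of $M$ is not merely a convenience---it is what guarantees that each branch intersection $\bigcap_n A_{s\restricted n}$ is a singleton---but the paper's standing hypothesis that $M$ is complete covers this, so the argument goes through as written.
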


\begin{definition} \label{defURproperties}
    We will say that $M$ has property:
    \begin{itemize}[leftmargin=*]
        \item (UR$\subset$cpt) if every uniformly regular set $W \subset \Free(M)$ is relatively compact.
        \item (UR$\subset$w-cpt) if every uniformly regular set $W \subset \Free(M)$ is relatively weakly compact.
        \item (UR$\subset$w-precpt) if every uniformly regular set $W \subset \Free(M)$ is weakly precompact.
        \item (UR$\subset$V*) if every uniformly regular set $W \subset \Free(M)$ is a V*-set.
    \end{itemize}
\end{definition}

It is clear that (UR$\subset$cpt) $\implies$ (UR$\subset$w-cpt) $\implies$ (UR$\subset$w-precpt) $\implies $(UR$\subset$V*).
We will show in Corollary \ref{c:URisCPTChar} that all of these properties are, in fact, equivalent, and characterize precisely when they hold.

Next, for a metric space $M$, we define its subset $$M_k=\set{x\in M:d(x,0)\in [2^{k},2^{k+2}]} \cup \{0\}$$ for every $k \in \Integer$. 
The well-known Kalton decomposition \cite[Proposition~4.3]{Kalton04} asserts that there exist Lipschitz functions 
$\varphi_n : M \to \mathbb{R}$ with $\supp(\varphi_n) \subset M_n$ such that the pre-adjoints 
$T_n=T_{\varphi_n}$ of the associated multiplication operators $\mathsf{M}_{\varphi_n} : \Lip_0(M_n ) \to \Lip_0(M)$
satisfy that 
\[
T := \bigoplus_{n \in \mathbb{Z}} T_n : \Free(M) \to X, 
\qquad \text{where } X = \left(\sum_{n \in \mathbb{Z}} \Free(M_n )\right)_{\ell_1},
\] 
is a bounded operator with complemented range. More precisely, we have $S \circ T = \mathrm{Id}_{\Free(M)}$,
where $S : \left(\sum_{n \in \mathbb{Z}} \Free(M_n )\right)_{\ell_1} \to \Free(M)$
is the sum of the coordinates.  
The following propositions will be our main workhorses.

\begin{proposition}\label{p:KaltonDecomposition}
    Let $M$ be a pointed metric space, let $W\subset\Free(M)$ be uniformly regular.
    Then for every $\varepsilon>0$ there exists a finite set $F \subset \Integer$ such that $$T(W) \subset T_F(W)+\varepsilon B_{X}$$
    where $T_F=\bigoplus_{n \in F} T_n$.
\end{proposition}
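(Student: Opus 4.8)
The plan is to exploit the block structure of the target space $X=\left(\sum_{n\in\Z}\Free(M_n)\right)_{\ell_1}$ together with the equivalence $(i)\Leftrightarrow(ii)$ of Theorem~\ref{thmURequiv}. Fix $\varepsilon>0$. For a finite set $F\subset\Z$ and $w\in W$ we have
\[
\norm{Tw-T_F w}_X=\sum_{n\notin F}\norm{T_n w}_{\Free(M_n)},
\]
so the claim amounts to showing that this tail sum is small, uniformly over $w\in W$. I would first observe that if this failed, then for every finite $F$ there would be $w\in W$ with $\sum_{n\notin F}\norm{T_n w}>\varepsilon$, and so by a standard exhaustion argument one can extract a sequence of \emph{pairwise disjoint} finite blocks $F_1,F_2,\dots$ of $\Z$ and elements $w_k\in W$ with $\sum_{n\in F_k}\norm{T_n w_k}>\varepsilon/2$ for all $k$ (using that each $Tw$ lies in $\ell_1$-sum, so only finitely many coordinates carry mass above any threshold; pick $F_{k+1}$ to start beyond $\max F_k$ and large enough).

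The key point is then to convert ``mass in the block $F_k$ of coordinates'' into ``distance to $\Free(U_k^c)$'' for a suitable open set $U_k$, with the $U_k$ pairwise disjoint. Here I would set $U_k:=\bigcup_{n\in F_k} B^O(0,2^{n+2})\setminus B(0,2^{n})$ — more precisely the union over $n\in F_k$ of the ``shells'' $\set{x: 2^{n}<d(x,0)<2^{n+2}}$ — so that $\supp\varphi_n\subset M_n\subset \cl{U_k}$ whenever $n\in F_k$. Since the blocks $F_k$ are pairwise disjoint and the shells $M_n$, $M_{n'}$ are disjoint once $|n-n'|\geq 2$, after thinning $F_k$ to contain only indices of the same parity (losing at most a factor $2$ in the mass, so $\sum_{n\in F_k}\norm{T_n w_k}>\varepsilon/4$) the sets $U_k$ become pairwise disjoint open subsets of $M$. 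One then checks that $\dist(w_k,\Free(U_k^c))$ is bounded below by a fixed multiple of $\sum_{n\in F_k}\norm{T_n w_k}$: indeed, any $\mu\in\Free(U_k^c)$ is supported off $\bigcup_{n\in F_k}M_n$, hence $T_n\mu=0$ for $n\in F_k$ (as $T_n$ is a weighted operator with $\supp(T_n\mu)\subset\supp(\mu)\cap M_n$), so
\[
\norm{w_k-\mu}\geq \frac{1}{\norm{T}}\norm{T(w_k-\mu)}_X\geq \frac{1}{\norm{T}}\sum_{n\in F_k}\norm{T_n w_k}>\frac{\varepsilon}{4\norm{T}}.
\]
This contradicts condition $(ii)$ of Theorem~\ref{thmURequiv}, since $\sup_{\gamma\in W}\dist(\gamma,\Free(U_k^c))$ would not tend to $0$.

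The main obstacle I anticipate is the bookkeeping around the shells: the sets $M_n$ overlap for consecutive $n$ (they are defined on $[2^n,2^{n+2}]$, so $M_n\cap M_{n+1}\neq\emptyset$), which is why one must pass to a parity class to guarantee genuine disjointness of the $U_k$, and one must be careful that restricting $F_k$ to one parity still retains a definite fraction of the block mass. A secondary technical point is verifying that $\Free(U_k^c)$ does kill the coordinates $T_n$ for $n\in F_k$: this uses exactly the support-localization property of the weighted operators $T_n$ (cf.\ Lemma~\ref{l:URthroughSupportOps}), namely $\supp(T_n\mu)\subset\supp(\mu)\cap M_n$, together with $U_k^c\cap M_n=\emptyset$ for $n$ in the (thinned) block $F_k$. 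Once these two points are in place, the argument is a routine gliding-hump extraction closed off by Theorem~\ref{thmURequiv}.
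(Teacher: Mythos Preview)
Your approach via Theorem~\ref{thmURequiv} is valid in principle but far more involved than what the paper does, and two pieces of the bookkeeping need tightening.

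\textbf{The paper's argument.} The paper gives a five-line direct proof: apply the definition of uniform regularity once, with the single open set $U=M\setminus\{0\}$. This yields a compact $K\subset M\setminus\{0\}$ with $W\subset\Free(K\cup\{0\})+\tfrac{\varepsilon}{2\norm{T}}B_{\Free(M)}$. Since $K$ is bounded and bounded away from $0$, there is a finite $F\subset\Z$ with $\varphi_n\restricted_K=0$ for all $n\notin F$; hence $T\tilde w=T_F\tilde w$ for every $\tilde w\in\Free(K\cup\{0\})$, and $\norm{Tw-T_Fw}\leq 2\norm{T}\norm{w-\tilde w}\leq\varepsilon$. No gliding hump, no disjoint-shell combinatorics, no appeal to Theorem~\ref{thmURequiv}.

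\textbf{What your route buys and what it costs.} Your contradiction argument does illustrate how the sequential characterisation can substitute for a direct use of the definition, but here it adds complexity with no payoff, and two steps as written are not correct. First, thinning each $F_k$ separately to a parity class does not make the $U_k$ pairwise disjoint across $k$: if $F_1'=\{1\}$ and $F_2'=\{2\}$ the open shells $(2,8)$ and $(4,16)$ overlap. You must fix one parity that works for infinitely many $k$ (pigeonhole) and pass to that subsequence, so that all indices in $\bigcup_k F_k'$ differ by at least $2$. Second, the assertion ``$U_k^c\cap M_n=\emptyset$ for $n$ in the thinned block'' is false: $0\in M_n$ always, and the boundary spheres $\{d(\cdot,0)=2^n\}$ and $\{d(\cdot,0)=2^{n+2}\}$ lie in $M_n\cap U_k^c$. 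Consequently the inclusion $\supp(T_n\mu)\subset\supp(\mu)\cap M_n$ does not by itself force $T_n\mu=0$ for $\mu\in\Free(U_k^c)$. The repair is to invoke the sharper fact from Kalton's construction that $\varphi_n$ vanishes outside the \emph{open} shell, so $T_n\mu=0$ whenever $\supp(\mu)$ avoids that open shell. With these two patches your argument goes through.
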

\begin{proof}
    Let $U=\bigcup_{n\in \Integer} M_n = M\setminus \{0\}$ and let $\varepsilon>0$ be fixed.
    There exists a compact $K \subset U$ such that $W \subset \Free(K \cup \set{0}) +\frac{\varepsilon}{2\norm{T}}B_{\Free(M)}$.
    Since $\inf_{x\in K}d(0,x)>0$ and $\sup_{x\in K} d(0,x)<\infty$, there is a finite set $F \subset \Integer$ such that $\varphi_n= 0$ on $K$ whenever $n \notin F$.
    Let $w \in W$ and $\tilde{w} \in \Free(K\cup\set{0})$ such that $\norm{w-\tilde{w}}\leq \frac{\varepsilon}{2\norm{T}}$.
    Then $$\norm{Tw-T_Fw}=\norm{Tw-T\tilde{w}+T_F\tilde{w}-T_Fw}\leq 2\norm{T}\norm{w-\tilde{w}}\leq \varepsilon$$ as desired.
\end{proof}

\begin{proposition} \label{Prop:KDUR}
    Let $M$ be a pointed metric space such that $M_n$ has property (UR$\subset$V*) for every $n \in \Z$. Then $M$ has property (UR$\subset$V*). The same holds for properties (UR$\subset$w-precpt), (UR$\subset$w-cpt), and (UR$\subset$cpt).
\end{proposition}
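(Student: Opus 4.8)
The strategy is to push everything through the Kalton decomposition $T=\bigoplus_{n\in\Z}T_n\colon\Free(M)\to X=\left(\sum_{n\in\Z}\Free(M_n)\right)_{\ell_1}$ and its left inverse $S$, exploiting that all four target properties behave in the same way. Let $P$ denote any one of ``relatively compact'', ``relatively weakly compact'', ``weakly precompact'', ``V*-set''. I would first record that $P$ enjoys the following permanence properties, each valid in all four cases: $(\mathrm a)$ if $A$ has property $P$ and $B\subseteq A$, then $B$ has property $P$; $(\mathrm b)$ if $R\colon Y\to Z$ is bounded linear and $A\subseteq Y$ has property $P$, then $R(A)$ has property $P$; $(\mathrm c)$ if $A_1,\dots,A_m$ have property $P$, then so does the Minkowski sum $A_1+\dots+A_m$; and $(\mathrm d)$ (Grothendieck-type criterion) if $A$ is bounded and for every $\varepsilon>0$ there is a set $A_\varepsilon$ with property $P$ such that $A\subseteq A_\varepsilon+\varepsilon B_X$, then $A$ has property $P$. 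Items $(\mathrm a)$, $(\mathrm b)$, $(\mathrm c)$ are routine in each case. For $(\mathrm d)$: when $P$ is relative compactness this is the standard total-boundedness argument; for relative weak compactness it is Grothendieck's classical result (one checks $\overline{A}^{w^*}\subseteq X+\varepsilon B_{X^{**}}$ in $X^{**}$ for every $\varepsilon$, hence $\overline{A}^{w^*}\subseteq X$); for weak precompactness it follows from the sequential characterization by a diagonal argument; and for V*-sets it follows at once from the estimate $\sup_{a\in A}\abs{\duality{x_n^*,a}}\le\sup_{a\in A_\varepsilon}\abs{\duality{x_n^*,a}}+\varepsilon\sup_k\norm{x_k^*}$, valid for every WUC series $\sum_n x_n^*$ in $X^*$, whose terms are uniformly bounded.

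Granting this, the proof is short. Let $W\subseteq\Free(M)$ be uniformly regular. Each $M_n$ is a closed subset of $M$ and $T_n=T_{\varphi_n}$ is a weighted Lipschitz operator $\Free(M)\to\Free(M_n)$, so Lemma~\ref{l:URthroughSupportOps} applies and $T_n(W)$ is uniformly regular in $\Free(M_n)$. By the hypothesis on $M_n$, $T_n(W)$ has property $P$ in $\Free(M_n)$; hence, writing $\iota_n\colon\Free(M_n)\hookrightarrow X$ for the canonical isometric inclusion, $\iota_n(T_n(W))$ has property $P$ in $X$ by $(\mathrm b)$. Now fix $\varepsilon>0$. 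Proposition~\ref{p:KaltonDecomposition} yields a finite set $F\subset\Z$ with $T(W)\subseteq T_F(W)+\varepsilon B_X$, where $T_F=\bigoplus_{n\in F}T_n$; and clearly $T_F(W)\subseteq\sum_{n\in F}\iota_n(T_n(W))$, a finite Minkowski sum of sets having property $P$, hence a set with property $P$ by $(\mathrm c)$. Since $W$ is bounded, so is $T(W)$, and $(\mathrm d)$ gives that $T(W)$ has property $P$ in $X$. Finally $W=S(T(W))$ with $S$ bounded, so $W$ has property $P$ by $(\mathrm b)$. This establishes the statement for all four properties at once.

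I do not anticipate a genuine obstruction: the argument is essentially bookkeeping around results already proved. The points requiring a little care are $(\mathrm i)$ checking that $T_n$ really falls within the scope of Lemma~\ref{l:URthroughSupportOps}, i.e. that $M_n$ is closed and that weighted Lipschitz operators shrink supports; and $(\mathrm{ii})$ the verification of the Grothendieck-type criterion $(\mathrm d)$ in the V*-set and weak-precompactness cases, which is the only ingredient not directly quotable from the text (both verifications are a few lines each). Using the Minkowski-sum inclusion $T_F(W)\subseteq\sum_{n\in F}\iota_n(T_n(W))$, rather than trying to identify $T_F(W)$ inside a finite $\ell_1$-product, keeps step $(\mathrm c)$ completely elementary.
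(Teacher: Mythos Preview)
Your proposal is correct and follows essentially the same route as the paper: apply Lemma~\ref{l:URthroughSupportOps} to get each $T_n(W)$ uniformly regular, use Proposition~\ref{p:KaltonDecomposition} to approximate $T(W)$ by $T_F(W)$, invoke a Grothendieck-type criterion, and finish via $W=S(T(W))$. The only cosmetic difference is that the paper verifies directly that $T_F(W)$ is a V*-set by estimating $\sup_{w\in W}\abs{\duality{T_Fw,f_k}}\le\sum_{n\in F}\sup_{w\in W}\abs{\duality{T_nw,f_{k,n}}}$ for a WUC series $(f_k)$, whereas you pass through the Minkowski-sum inclusion $T_F(W)\subseteq\sum_{n\in F}\iota_n(T_n(W))$ and your permanence property~(c); your packaging has the mild advantage of treating all four properties uniformly.
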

\begin{proof} 
    We present the proof only for (UR$\subset$V*), since a similar argument applies to the other properties.
    Let $W \subset \Free(M)$ be a uniformly regular set. 
    By Proposition~\ref{p:KaltonDecomposition}, for every $\varepsilon>0$, there exists a finite $F\subset\Z$ such that the set $T(W)$ is $\varepsilon$-close to the set $T_F(W)$.
    We claim that $T_F(W)$ is a V*-set.
    Indeed, let $(f_k) \subset \left(\sum_{n\in F} \Free(M_n )\right)_{\ell_1}^*$ be such that $\sum f_k$ is a WUC series. For each $k \in \N$, let us write $f_k=(f_{k,n})_{n \in F}$ with $f_{k,n} \in \Lip_0(M_n)$. By Lemma~\ref{l:URthroughSupportOps}, each $T_n(W)$ is a uniformly regular set and thus a V*-set by assumption.
    Then, 
    $$\sup_{w\in W}\abs{\duality{T_Fw,f_k}}\leq \sum_{n\in F}\sup_{w\in W}\abs{\duality{T_nw,f_{k,n}}}\to 0$$ as $k\to\infty$.
    Thus, by the Grothendieck criterion for V*-sets (see e.g. \cite[Fact~1.6]{APQ}), $T(W)$ is a V*-set. Finally, since the image of V*-set by a bounded operator is a V*-set, $W=S\circ T(W)$ is a V*-set.
\end{proof}

In the next statement, if $\alpha$ is an ordinal then $M^{(\alpha)}$ denotes the Cantor-Bendixson derived set of order $\alpha$.

\begin{proposition}
    \label{Prop:CountablePropertyUR}
    Let $M$ be a compact such that $M^{(\alpha)}$ is finite for some ordinal~$\alpha$. 
    Then $M$ has property (UR$\subset$cpt).
\end{proposition}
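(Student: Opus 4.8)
\emph{Proof plan.} The plan is to argue by a transfinite induction on $\alpha$ carrying a nested finite induction on $m$. Precisely, I would prove the statement $P(\alpha,m)$: \emph{every compact metric space $M$ with $M^{(\alpha)}$ finite and $\abs{M^{(\alpha)}}\le m$ has property (UR$\subset$cpt)}, for every ordinal $\alpha$ and every $m\in\N$. The base case $\alpha=0$ is trivial, since then $M=M^{(0)}$ is finite, $\F(M)$ is finite-dimensional, and every bounded set is relatively compact. Then I fix $\alpha\ge 1$ and assume $P(\beta,k)$ for all $\beta<\alpha$ and all $k$, together with $P(\alpha,m')$ for all $m'<m$; the statement to prove is then $P(\alpha,m)$, so I take $M$ compact with $\abs{M^{(\alpha)}}\le m$.

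I would split the inductive step into two cases. \emph{Case $m=0$}, i.e. $M^{(\alpha)}=\emptyset$: if $M$ is finite we are done, and if $M$ is infinite I would use compactness to produce an ordinal $\beta<\alpha$ with $M^{(\beta)}$ finite. Letting $\xi$ be the least ordinal with $M^{(\xi)}$ finite (so $\xi\le\alpha$), the set $M^{(\xi)}$ is nonempty: if $\xi=\eta+1$ were a successor, the infinite compact set $M^{(\eta)}$ would have empty derived set, and if $\xi$ were a limit, the decreasing family of nonempty compacta $(M^{(\gamma)})_{\gamma<\xi}$ would violate the finite intersection property; since $M^{(\xi)}\ne\emptyset=M^{(\alpha)}$ we get $\xi<\alpha$, so $\beta:=\xi$ works, and $P(\beta,\abs{M^{(\beta)}})$ finishes this case. \emph{Case $m\ge 1$}: if $\abs{M^{(\alpha)}}<m$ then $P(\alpha,m-1)$ applies to $M$ directly, so I may assume $\abs{M^{(\alpha)}}=m$. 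I then pick $p\in M^{(\alpha)}$ and, by Lemma~\ref{l:StabilityUnderChangeOfBasepoint}, take $p$ as the base point of $M$, so $0=p\in M^{(\alpha)}$. Applying the Kalton decomposition of Proposition~\ref{p:KaltonDecomposition} around $0$, with pieces $M_n=\set{x\in M:d(x,0)\in[2^n,2^{n+2}]}\cup\set{0}$ for $n\in\Z$, each $M_n$ is compact and closed in $M$, hence $M_n^{(\gamma)}\subset M^{(\gamma)}$ for every $\gamma$; moreover $0$ is isolated in $M_n$ (every other point of $M_n$ is at distance at least $2^n$ from it), so $0\notin M_n^{(\gamma)}$ for $\gamma\ge 1$. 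Consequently $M_n^{(\alpha)}\subset M^{(\alpha)}\setminus\set{0}$, a set of at most $m-1$ points, so by $P(\alpha,m-1)$ every $\F(M_n)$ has property (UR$\subset$cpt), and Proposition~\ref{Prop:KDUR} then yields that $\F(M)$ has property (UR$\subset$cpt). This closes the induction, and the proposition is the instance $m=\abs{M^{(\alpha)}}$.

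I expect the decisive point to be the observation driving the induction rather than any single estimate: centering the Kalton decomposition at a point of $M^{(\alpha)}$ turns that point into an isolated point of each annulus $M_n$, so it disappears from $M_n^{(\alpha)}$ and the ``$\alpha$-deep part'' of every piece is strictly smaller than that of $M$ --- which is exactly what makes the secondary induction on $\abs{M^{(\alpha)}}$ run. All the analytic work of transferring uniform regularity and relative compactness through the decomposition $S\circ T=\mathrm{Id}$ is already packaged in Proposition~\ref{Prop:KDUR}. The only mildly delicate step is the ordinal bookkeeping in the case $M^{(\alpha)}=\emptyset$ with $M$ infinite, where compactness of $M$ is genuinely used, namely through the facts that an infinite compact metric space has nonempty derived set and that a decreasing transfinite chain of nonempty compacta has nonempty intersection.
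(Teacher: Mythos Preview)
Your proof is correct and follows essentially the same approach as the paper: transfinite induction on $\alpha$ with a nested induction on $\abs{M^{(\alpha)}}$, centering the Kalton decomposition at a point of $M^{(\alpha)}$ so that it becomes isolated in every annulus $M_n$, and then invoking Proposition~\ref{Prop:KDUR}. Your treatment of the case $M^{(\alpha)}=\emptyset$ is in fact more careful than the paper's, which asserts (slightly imprecisely) the existence of $\beta<\alpha$ with $M^{(\beta)}=\emptyset$ rather than merely finite; your minimality argument for $\xi$ handles both the successor and limit subcases cleanly.
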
 

\begin{proof}
The proof proceeds by transfinite induction on $\alpha$. Let us denote:
\smallskip

\noindent ($H_\alpha$): For every compact metric space $M$, if $M^{(\alpha)}$ is finite, then $\F(M)$ has property (UR$\subset$cpt).
\smallskip

If $M$ is finite then $\dim\Free(M)<\infty$, and so every bounded set is relatively compact.
\smallskip

Let $\alpha \geq 1$ be an ordinal  such that $M^{(\alpha)}$ is finite and assume that $(H_\beta)$ is true for any $\beta < \alpha$. We will prove that $(H_\alpha)$ is true by induction on $k=|M^{(\alpha)}|\geq 0$. Suppose first that $k=0$, that is $M^{(\alpha)} = \emptyset$. By compactness of $M$, there exists $\beta < \alpha$ such that $M^{(\beta)} = \emptyset$, so $\F(M)$ has property (UR$\subset$cpt) by induction hypothesis ($H_\beta$).

Now suppose that $M^{(\alpha)} = \{x_1,\ldots,x_k\}$ with $k\geq 1$. Thanks to Lemma~\ref{l:StabilityUnderChangeOfBasepoint}, we may assume without loss of generality that $x_1 = 0$. Then it follows that for every $n\in \Z$, $M_n^{(\alpha)}$ contains at most $k-1$ points. Therefore, by the second induction hypothesis, $\F(M_n)$ has property (UR$\subset$cpt) for every $n \in \Z$. Consequently, $\F(M)$ has property (UR$\subset$cpt) thanks to Proposition~\ref{Prop:KDUR}.
\end{proof}

\begin{theorem}\label{c:URisCPTChar}
     Let $M$ be a complete metric space. Then the following assertions are equivalent:
    \begin{enumerate}[$(i)$]
        \item $M$ is scattered.
        \item $M$ has property (UR$\subset$cpt).
        \item $M$ has property (UR$\subset$w-cpt).
        \item $M$ has property (UR$\subset$w-precpt).
        \item $M$ has property (UR$\subset$V*).
    \end{enumerate}
\end{theorem}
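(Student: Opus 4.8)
### Proof plan for Theorem~\ref{c:URisCPTChar}

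\textbf{Overview of the cycle.} The four implications $(ii)\Rightarrow(iii)\Rightarrow(iv)\Rightarrow(v)$ are immediate from the diagram of inclusions recorded just after Definition~\ref{defURproperties} (compact $\Rightarrow$ weakly compact $\Rightarrow$ weakly precompact $\Rightarrow$ V*-set). So it suffices to prove $(i)\Rightarrow(ii)$ and $(v)\Rightarrow(i)$, which closes the loop. I would present it exactly this way.

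\textbf{The implication $(v)\Rightarrow(i)$.} This is the ``negative'' direction and here Proposition~\ref{prop:PerfectURnotwpreCPT} does essentially all the work. If $M$ is \emph{not} scattered, then by definition $M$ contains a non-empty perfect subset, so Proposition~\ref{prop:PerfectURnotwpreCPT} produces a set $W\subset\Free(M)$ which is uniformly regular but not a V*-set. Hence $M$ fails property (UR$\subset$V*), contradicting $(v)$. That is all; the hard analytic content was already absorbed into Proposition~\ref{prop:PerfectURnotwpreCPT}.

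\textbf{The implication $(i)\Rightarrow(ii)$.} This is where the work lies. The strategy is a reduction to the compact case via the Kalton decomposition, then a transfinite induction. First, if $M$ is scattered and complete, then $M$ is \emph{boundedly compact}: indeed a scattered complete metric space has the property that each ball $B(0,r)$ is scattered and complete, and one wants it to be compact. Actually the cleanest route is: for a uniformly regular $W\subset\Free(M)$, Step~1 of the proof of Theorem~\ref{thmURequiv} (or directly uniform regularity applied to $U=M$, i.e. tightness) already lets us approximate $W$ within $\varepsilon B_{\Free(M)}$ by a set supported on a compact $K\subset M$; by Lemma~\ref{l:URthroughSupportOps}-type reasoning we may further work in $\Free(K\cup\{0\})$. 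So it suffices to prove property (UR$\subset$cpt) for \emph{compact} scattered $M$. Now a compact metric space is scattered if and only if it is countable, equivalently if and only if $M^{(\alpha)}=\emptyset$ for some countable ordinal $\alpha$, and in particular $M^{(\alpha)}$ is finite (indeed empty) for some $\alpha$. Thus Proposition~\ref{Prop:CountablePropertyUR} applies directly and gives property (UR$\subset$cpt) for $M$ compact scattered; combined with the tightness reduction above, every complete scattered $M$ has property (UR$\subset$cpt).

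\textbf{Main obstacle.} The one point that needs care is the reduction from general complete scattered $M$ to the compact case: one must argue that a uniformly regular $W$ in $\Free(M)$, with $M$ complete scattered, is automatically tight (true, since uniform regularity $\Rightarrow$ tight, by Proposition~\ref{p:URbasic}/the remark after Definition in \S2) \emph{and} that the relevant compact set $K$ may be taken so that $K\cup\{0\}$ is itself scattered, so Proposition~\ref{Prop:CountablePropertyUR} is applicable to it — this holds because subsets of scattered spaces are scattered. Then uniform regularity is inherited by $W$ as a subset of $\Free(K\cup\{0\})$ by Proposition~\ref{p:URStableFreeSubspaces}, relative compactness there implies relative compactness in $\Free(M)$, and a Grothendieck-criterion argument (Proposition~\ref{p:URbasic}(4), or Proposition~\ref{norm-compactness}) patches the $\varepsilon$-approximations together. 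I expect writing out this last gluing step carefully — making sure ``relatively compact up to $\varepsilon$ for every $\varepsilon$'' genuinely yields ``relatively compact'' — to be the only place where one must be slightly attentive; everything else is bookkeeping on top of the already-proved Propositions~\ref{prop:PerfectURnotwpreCPT} and~\ref{Prop:CountablePropertyUR}.
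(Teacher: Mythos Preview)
Your overall architecture matches the paper exactly: the trivial chain $(ii)\Rightarrow(iii)\Rightarrow(iv)\Rightarrow(v)$, the contrapositive $(v)\Rightarrow(i)$ via Proposition~\ref{prop:PerfectURnotwpreCPT}, and $(i)\Rightarrow(ii)$ by reducing to the compact scattered case and invoking Proposition~\ref{Prop:CountablePropertyUR}. The issue is in the reduction step, and it is a genuine gap, not just bookkeeping.

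You write that ``uniform regularity is inherited by $W$ as a subset of $\Free(K\cup\{0\})$ by Proposition~\ref{p:URStableFreeSubspaces}''. But $W$ is \emph{not} a subset of $\Free(K\cup\{0\})$; tightness only gives $W\subset\Free(K)+\varepsilon B_{\Free(M)}$. To apply Proposition~\ref{Prop:CountablePropertyUR} you need an actual uniformly regular set \emph{inside} $\Free(K)$ that approximates $W$, and picking arbitrary $\varepsilon$-approximants $w'\in\Free(K)$ for each $w\in W$ does not preserve uniform regularity (the error term $\langle f_n, w-w'\rangle$ contributes a residual $O(\varepsilon)$ that does not vanish with $n$). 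Your hand-wave ``Lemma~\ref{l:URthroughSupportOps}-type reasoning'' presupposes a bounded linear operator $\Free(M)\to\Free(K)$ with the right support property, and no such operator is available in general.

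The paper closes this gap by invoking \cite[Theorem~2.3]{ANPP}: from tightness one obtains a compact $K\subset M$ and a sequence of weighted operators $T_n:\Free(M)\to\Free(M)$ converging \emph{uniformly on $W$} to a map $T:W\to\Free(K)$, with $W\subset T(W)+\varepsilon B_{\Free(M)}$. Each $T_n(W)$ is uniformly regular by Lemma~\ref{l:URthroughSupportOps}, so $T(W)$ is uniformly regular by the Grothendieck criterion (Proposition~\ref{p:URbasic}(4)), and then Proposition~\ref{p:URStableFreeSubspaces} transfers this to $\Free(K)$. From there your argument goes through. So the missing ingredient is precisely this compact reduction machinery producing a genuine operator-image inside $\Free(K)$, not merely a metric approximation.
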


\begin{proof}
First, observe that the chain of implications $(ii) \Rightarrow (iii) \Rightarrow (iv) \Rightarrow (v)$ is immediate. 
On the other hand, if $M$ is not scattered, then it contains a non-empty perfect subset, and we can apply Proposition~\ref{prop:PerfectURnotwpreCPT} to obtain $(v) \Rightarrow (i)$. 
Consequently, it remains only to prove $(i) \Rightarrow (ii)$. 
Note that, under the additional assumption that $M$ is compact, scatteredness is equivalent to countability, and therefore this implication follows from Proposition~\ref{Prop:CountablePropertyUR}. 
To establish it in the general case, we proceed by means of a compact reduction argument.
 Let $M$ be scattered.
Assume that $W\subset \Free(M)$ is uniformly regular and fix $\varepsilon>0$.
Then it is tight and~\cite[Theorem 2.3]{ANPP} implies that there exist a compact $K\subset M$ and a sequence of weighted operators $T_n:\Free(M)\to \Free(M)$ which converges uniformly on $W$ to a mapping $T:W \to \Free(K)$, 
and moreover $W \subset T(W)+\varepsilon B_{\Free(M)}$.
So, by Lemma~\ref{l:URthroughSupportOps}, the sets $T_n(W)$ are uniformly regular. 
By the uniform convergence the set $T(W)$ is arbitrarily close to these sets.
Proposition~\ref{p:URbasic}~(4) (``Grothendieck's criterion'') thus shows that $T(W)$ is uniformly regular with respect to $\Free(M)$.
Proposition~\ref{p:URStableFreeSubspaces} further shows that $T(W)$ is uniformly regular with respect to $\Free(K)$.
The compact case implies that $T(W)$ is relatively compact in $\Free(K)$, hence in $\Free(M)$. 
Recalling that $W \subset T(W)+\varepsilon B_{\Free(M)}$, ``Grothendieck's criterion for compacts'' implies that $W$ is relatively compact itself. This shows $(i) \implies (ii)$.
\end{proof}

Corrollary~\ref{c:CorollaryC} is now obtained as a consequence of Corollaries \ref{thmVstarUR} and \ref{c:URisCPTChar}.
This previously unknown result improves~\cite[Corollary 2.10]{APQ}.

\begin{corollary}\label{cr:scattered_v*}
    If $M$ is complete and scattered, then $\F(M)$ has property (V*).
\end{corollary}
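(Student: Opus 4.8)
The plan is to simply chain together the two main results of this section. Let $W \subset \F(M)$ be a V*-set; we want to show $W$ is relatively weakly compact. The first step is to invoke Corollary~\ref{thmVstarUR}: since $M$ is complete and $W$ is a V*-set, $W$ is uniformly regular. This is exactly where the hypothesis of completeness of $M$ enters (via Theorem~\ref{thmURequiv}, on which Corollary~\ref{thmVstarUR} rests).

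The second step is to feed uniform regularity of $W$ into Theorem~\ref{c:URisCPTChar}. Since $M$ is scattered, condition $(i)$ of that theorem holds, hence so do conditions $(ii)$ and $(iii)$: every uniformly regular subset of $\F(M)$ is relatively (weakly) compact. Applying this to $W$ gives that $W$ is relatively weakly compact, which is precisely the assertion that $\F(M)$ has property (V*). (In fact one gets the stronger conclusion that $W$ is relatively norm-compact, consistent with the fact that scattered complete $M$ is purely $1$-unrectifiable and hence $\F(M)$ has the Schur property.)

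Since both ingredients are already proven in the excerpt, there is no real obstacle here; the only thing to be careful about is to state the logical implications in the right direction, namely that ``V*-set $\Rightarrow$ uniformly regular'' (Corollary~\ref{thmVstarUR}) combined with ``uniformly regular $\Rightarrow$ relatively weakly compact for scattered $M$'' (Theorem~\ref{c:URisCPTChar}, $(i)\Rightarrow(iii)$) yields ``V*-set $\Rightarrow$ relatively weakly compact'', i.e. property (V*). One should also remark, as the paper does, that this improves \cite[Corollary~2.10]{APQ}, since the hypothesis here is merely ``complete and scattered'' rather than the stronger assumptions required there.
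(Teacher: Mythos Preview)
Your proof is correct and matches the paper's own argument exactly: the paper simply states that the corollary follows from Corollary~\ref{thmVstarUR} and Theorem~\ref{c:URisCPTChar}, which is precisely the chain ``V*-set $\Rightarrow$ uniformly regular $\Rightarrow$ relatively (weakly) compact'' that you spell out.
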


\bigskip

To conclude this section, we introduce another class of subsets of Lipschitz free spaces for which uniform regularity serves as a characterization of relative compactness.
If $k \in \N$, let $\mathcal{FS}_k(M)$ denote the subset of elements in $\F(M)$ whose support contains at most $k$ elements. For sequences in $\mathcal{FS}_k(M)$, norm and weak convergence are actually equivalent \cite[Theorem C]{ACP21}. 

\begin{proposition}
	Let $M$ be a complete metric space and let $W \subset \mathcal{FS}_k(M)$ be bounded. Then $W$ is relatively compact if and only if it is uniformly regular.
\end{proposition}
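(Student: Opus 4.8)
The plan is as follows. One direction is immediate: if $W$ is relatively compact then, by Proposition~\ref{norm-compactness}, it is bounded and for every $\varepsilon>0$ contained in $\F(F)+\varepsilon B_{\F(M)}$ for a finite $F\subset M$; since finite subsets of $\F(M)$ are uniformly regular, Proposition~\ref{p:URbasic}(4) (``Grothendieck's criterion'') gives that $W$ is uniformly regular. (Alternatively, a relatively compact set is a V*-set, so Corollary~\ref{thmVstarUR} applies directly.)

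For the converse I would argue by induction on $k$ that a bounded uniformly regular $W\subset\mathcal{FS}_k(M)$ contains no $\delta$-separated sequence, which forces relative compactness. Passing to a subsequence, a would-be $\delta$-separated sequence $(\gamma_m)$ may be taken with $\inf_m\|\gamma_m\|>0$ and with each $\gamma_m=\sum_{i=1}^{k'}a_i^m\delta(x_i^m)$ supported on exactly $k'$ points, $a_i^m\neq0$; if $k'<k$ the induction hypothesis applied to $\{\gamma_m\}\subset\mathcal{FS}_{k'}(M)$ (uniformly regular by Proposition~\ref{p:URbasic}(1)) already produces a convergent subsequence, a contradiction, so $k'=k$. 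The core is then a case analysis of the asymptotic behaviour of the weighted configuration $\{(x_i^m,a_i^m)\}_{i\le k}$, with two productive mechanisms. First: if some node $x_{i^\ast}^m$ carries ``isolated mass'' $|a_{i^\ast}^m|\,N_{i^\ast}^m$ bounded below, where $N_i^m=\min\bigl(\{d(x_i^m,0)\}\cup\{d(x_i^m,x_j^m):j\neq i\}\bigr)$, and if the balls $U_m=B^O(x_{i^\ast}^m,\tfrac13 N_{i^\ast}^m)$ can be made pairwise disjoint along a subsequence, then $U_m$ meets $\supp(\gamma_m)$ only at $x_{i^\ast}^m$ and avoids $0$, so a $1$-Lipschitz function vanishing off $U_m$ yields $\dist(\gamma_m,\F(U_m^c))\ge\tfrac13|a_{i^\ast}^m|N_{i^\ast}^m\not\to0$, contradicting uniform regularity via Theorem~\ref{thmURequiv}(ii); the disjointness of the $U_m$ is automatic when $x_{i^\ast}^m$ stays bounded with no convergent subsequence, and is obtained by passing to a subsequence along which the relevant local scale decreases, resp.\ increases, geometrically in the colliding case $N_{i^\ast}^m\to0$, resp.\ the escaping case $d(x_{i^\ast}^m,0)\to\infty$. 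Second: when a cluster of support points accumulates at a point $z$ at positive distance from the base point, a bump multiplication operator (Lemma~\ref{l:URthroughSupportOps}) splits $\gamma_m$ into the cluster part and the rest, both uniformly regular (Proposition~\ref{p:URbasic}); the cluster part has uniformly bounded coefficients, hence converges along a subsequence, while the rest lies in $\mathcal{FS}_{k-1}(M)$, and the induction hypothesis applies — reducing $k$. Iterating this peeling-off (invoking Lemma~\ref{l:StabilityUnderChangeOfBasepoint} when it is convenient to move $z$ to the base point) leaves a configuration in which the surviving nodes are either pairwise well-separated and bounded away from the base point — in which case $\|\gamma_m\|$ is comparable to $\sum_i|a_i^m|$ and the node of maximal coefficient supplies the isolated mass needed for the first mechanism — or all collapse onto the base point, which is the scenario dispatched by the geometrically shrinking balls around $0$.

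The main obstacle is making this dichotomy simultaneously exhaustive and quantitative: one needs an estimate of the form ``some support point carries a definite fraction of $\|\gamma_m\|$ as isolated mass'', and this fails outright without first peeling off collapsing clusters (a large ``direct current'' term at a node that is close to other nodes has small isolated mass), so the induction must interleave the estimate with the peeling-off rather than apply it at the outset. In the residual ``everything collapses onto the base point'' case I expect one needs a secondary induction on the number of surviving points — or a rescaling/blow-up of the metric at $0$ — to reduce to the single-node situation handled directly. One should also be careful that a change of base point can enlarge the support by one, so those shifts must be organised so as not to break the outer induction on $k$; reducing first to $M$ compact via the tightness of $W$ and the compact reduction principle (weighted operators preserve $\mathcal{FS}_k$) helps streamline the bookkeeping.
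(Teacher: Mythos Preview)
Your forward direction is fine and matches the paper. For the converse, however, you are working much too hard, and the sketch you give is not a proof: you yourself flag the ``main obstacle'' and write ``I expect one needs a secondary induction'', so the argument is explicitly incomplete. The peeling/isolated-mass dichotomy you propose is plausible in spirit, but you have not shown it is exhaustive, and the bookkeeping around base-point changes enlarging supports is exactly the sort of thing that can break an induction on $k$ unless pinned down precisely.

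The key idea you are missing is that all of this machinery is unnecessary once Theorem~\ref{c:URisCPTChar} is available. Given a sequence $(\gamma_n)\subset W$ with $\gamma_n=\sum_{i=1}^{k}a_i^n\delta(x_i^n)$, pass to a subsequence along which, for each fixed $i$, the sequence $(x_i^n)_n$ is either convergent or uniformly discrete. Then the closure $S$ of $\{x_i^n:n,i\}$ has at most $k$ accumulation points, hence is scattered. Since $\{\gamma_n\}\subset\F(S)$ and remains uniformly regular there by Proposition~\ref{p:URStableFreeSubspaces}, Theorem~\ref{c:URisCPTChar} (scattered $\Rightarrow$ (UR$\subset$cpt)) immediately gives a convergent subsequence. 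This replaces your entire case analysis by a single reduction to the scattered case, which the paper has already handled; your induction on $k$, the isolated-mass estimates, the bump-operator splitting, and the blow-up at $0$ are all subsumed in that earlier result.
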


\begin{proof}
By Corollary~\ref{thmVstarUR}, it suffices to show that if 
$ W \subset \mathcal{FS}_k(M) $ is uniformly regular, then $ W $ is relatively compact.  
Let $(\gamma_n)_n$ be a sequence in $W$. By assumption, each element $\gamma_n$ can be written as
\[
\gamma_n = \sum_{i=1}^{k} a_i^n \, \delta(x_i^n), \qquad \text{for some $a_i^n\in\R$ and $x_i^n\in M$.}
\]
By passing to a subsequence (which we continue to denote $(\gamma_n)_n$ for simplicity), 
we may assume that for each $i \in \{1,\ldots,k\}$, the sequence $(x_i^n)_n$ is either Cauchy (hence convergent) or uniformly discrete.
Let
\[
S := \overline{\{x_i^n : n \in \mathbb{N}, \ 1 \le i \le k\}} \subset M.
\]
Then $S$ is a complete subset of $M$ with at most $k$ accumulation points, hence scattered. 
Moreover, since every $\gamma_n$ is supported on $S$, we have 
$\{\gamma_n : n \in \mathbb{N}\} \subset \mathcal{F}(S)$.
By Proposition~\ref{p:URbasic}~(1) together with 
Proposition~\ref{p:URStableFreeSubspaces}, the set 
$\{\gamma_n : n \in \mathbb{N}\}$ is uniformly regular in $\mathcal{F}(S)$. 
Applying Corollary~\ref{c:URisCPTChar}, we conclude that 
$(\gamma_n)_n$ admits a convergent subsequence. 
Hence $W$ is relatively compact, as desired.
\end{proof}

\section{Some insights from the case of real trees} \label{section:Trees}

An $\R$-tree is an arc-connected metric space $(T, d)$ with the property that there is a unique arc connecting any pair of points $x \neq y \in T$ and it is moreover isometric to the real segment $[0, d(x, y)] \subset \R$. Such an arc is denoted by $[x, y]$ and is called a segment of $T$. We will also use the obvious notation $(x,y) = [x,y] \setminus \{x,y\}$, and sets of this type will sometimes be called intervals. On every real tree $T$, there is a canonical measure $\lambda_T$, called the length measure, whose restriction to every interval is equal to (the pushforward of) Lebesgue measure.
The operator $I:\F(T)\to L^1(\lambda_T)$ given by $I\delta(x)=\indicator{[0,x]}$ is a surjective isometry. If $0\in T'\subset T$ is a subtree, then $\lambda_{T'}$ is just the restriction of $\lambda_T$ to $T'$, and $I$ restricts to an isometry between $\F(T')$ and $L^1(\lambda_{T'})=L^1(T',\lambda_T)$. Note also that if a family of segments $[x_i,y_i]\subset T$ intersect pairwise trivially (at one point at most) then so do the supports of $I(m_{x_iy_i})$, and therefore the molecules $m_{x_iy_i}\in\Free(T)$ are an isometric $\ell_1$ basis (of appropriate cardinality). For more information on $\R$-trees and their Lipschitz free spaces, we refer the reader to Godard's article \cite{Godard}. 
 
In this section, we will use Godard's isometry $G : \F(T) \to L^1(\lambda_T)$ to describe the relatively weakly compact sets in $\F(T)$. To do so, it is necessary to recall first the notion of equi-integrability in $L^1$-spaces. Let $(\Omega , \mathcal A , \mu)$ be a measure space (not necessarily finite or even $\sigma$-finite). Then we say that a set $H \subset L^1(\mu)$ is \textit{equi-integrable} if it is bounded and satisfies the following conditions:
	\begin{enumerate}
		\item[$(a)$] For any $\ep > 0$, there exists $\delta >0$ such that for every $B \in \mathcal A$, $\mu(B) < \delta$ implies that $\int_B |f|\,d\mu < \ep$ for all $f \in H$.
		\item[$(b)$] For any $\ep >0$, there exists $B \in \mathcal A$ with $\mu(B) < \infty$ s.t. $\int_{\Omega\setminus B} |f|\,d\mu < \ep$ for all $f \in H$.
	\end{enumerate}
Notice that when $\mu$ is finite, condition (a) implies condition (b).   
According to the Dunford--Pettis theorem, a set $H$ is relatively weakly compact in $L^1(\mu)$ if and only if it is equi-integrable (see e.g. \cite[Theorem~15.4]{Voigt}). It is worth noting that, since $L^1$-spaces are weakly sequentially complete (see \cite[Theorem~5.2.9]{AlbiacKalton}), weakly precompact sets coincide with relatively weakly compact sets. 
In what follows, we will transfer the equi-integrability conditions through the isometry $L^1(\lambda_T) \to \F(T)$ and describe the resulting properties in $\F(T)$.

We shall examine the condition $(a)$ in the proposition below.
For convenience, we remind the reader that a \emph{convex series of molecules} is a series of the form $\gamma = \sum_{i=1}^{\infty} a_i m_{x_i y_i}$ with moreover $\|\gamma\| = \sum_{i=1}^{\infty} |a_i|$. 
If the sum is finite, we will say \emph{convex sum of molecules} instead. 
Convex sums and convex series of molecules should not be confused with convex combinations of molecules.
We will show that property $(a)$ is related to the distribution of the weights $|a_i|$ with respect to the distances $d(x_i,y_i)$ between pairs of points defining the molecules.

\begin{proposition} \label{Charac(a)}
	Let $T$ be a $\R$-tree. We denote by $G : \F(T) \to L^1(\lambda_T)$ Godard's isometry. For a bounded $W \subset \F(T)$, the following are equivalent:
	\begin{enumerate}
		\item[$(i)$] $\forall \ep > 0$, $\exists \delta >0$ such that for all $w \in W$ and for every measurable $B \subset T$, 
        \[\lambda_T(B) < \delta \implies \int_B |Gw|\,d\lambda_T < \ep.\]
        
        \item[$(ii)$] $\forall\ep > 0$, $\exists \delta>0$ such that for every convex sum of molecules $\sum_{i=1}^n a_i m_{x_i  y_i} \in \F(T)$ 
        $\forall I \subset \{1, \ldots , n\}$,
        \[\sum_{i\in I}d(x_i,y_i)<\delta \implies \sum_{i\in I}\abs{a_i}\leq\varepsilon+\dist(\gamma,W).\]

        \item[$(iii)$] $\exists C>0$, $\forall \ep >0$, $\exists \delta >0$ such that for all $w \in W$ and for every convex sum of molecules $\sum_{i=1}^n a_i m_{x_i  y_i} \in \F(T)$ with $(x_i)_{i=1}^n\cup(y_i)_{i=1}^n \subset \supp(w)\cup\set{0}$, $\|w - \sum_i a_i m_{x_i y_i}\| \leq C\ep$, $\forall I \subset \{1, \ldots , n\}$, 
        \[\sum_I d(x_i ,y_i) < \delta \implies \sum_I |a_i| \le \ep+C\ep.\]
    \end{enumerate}
\end{proposition}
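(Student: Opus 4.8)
Everything is read on the $L^1$ side through Godard's isometry $G$. The facts I will use repeatedly are: for $x\neq y$ one has $|G(m_{xy})|=d(x,y)^{-1}\indicator{[x,y]}$; if $\gamma=\sum_{i=1}^n a_i m_{x_iy_i}$ is a \emph{convex sum} of molecules then there is no cancellation in $L^1$, namely $|G\gamma|=\sum_{i=1}^n|a_i|\,d(x_i,y_i)^{-1}\indicator{[x_i,y_i]}$ a.e. (since $\|G\gamma\|_1=\|\gamma\|=\sum_i|a_i|=\sum_i\|a_iG(m_{x_iy_i})\|_1$ forces equality in the $L^1$ triangle inequality); for any $I\subseteq\{1,\dots,n\}$ the partial sum $\gamma_I=\sum_{i\in I}a_im_{x_iy_i}$ is again a convex sum of molecules, with $\|\gamma_I\|=\sum_{i\in I}|a_i|$ (because $\|\gamma\|\le\|\gamma_I\|+\|\gamma_{I^c}\|\le\sum_i|a_i|=\|\gamma\|$), with $G\gamma_I$ supported on $E:=\bigcup_{i\in I}[x_i,y_i]$, with $|G\gamma_I|\le|G\gamma|$ pointwise, and with $\lambda_T(E)\le\sum_{i\in I}d(x_i,y_i)$. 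Finally, finitely supported elements are dense in $\F(T)$, their $G$-images are step functions on a finite subtree, and any finitely supported $\gamma$ can be rewritten as a convex sum of molecules along the edges of any sufficiently fine finite subdivision of its subtree (each edge being traversed with a single sign).

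\textbf{$(i)\Leftrightarrow(ii)$.} For $(i)\Rightarrow(ii)$: given $\ep$, pick $\delta$ from $(i)$; for a convex sum $\gamma=\sum a_im_{x_iy_i}$ and $I$ with $\sum_{i\in I}d(x_i,y_i)<\delta$, we get, for every $w\in W$, $\sum_{i\in I}|a_i|=\|\gamma_I\|=\int_E|G\gamma_I|\le\int_E|G\gamma|\le\int_E|Gw|+\|\gamma-w\|<\ep+\|\gamma-w\|$, since $\lambda_T(E)<\delta$; taking the infimum over $w\in W$ gives $(ii)$. For $(ii)\Rightarrow(i)$: given $\ep$, pick $\delta$ from $(ii)$, fix $w\in W$, $B$ with $\lambda_T(B)<\delta$, $\eta>0$, and a finitely supported $\gamma$ with $\|w-\gamma\|<\eta$. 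On each edge $J$ of the subtree carrying $G\gamma$, $|G\gamma|$ is constant, so replacing $B\cap J$ by a subinterval $J^*\subseteq J$ with $\lambda_T(J^*)=\lambda_T(B\cap J)$ and putting $B^*=\bigcup_J J^*$ gives $\lambda_T(B^*)\le\lambda_T(B)<\delta$ and $\int_B|G\gamma|=\int_{B^*}|G\gamma|$. Subdividing further at the endpoints of the $J^*$'s, write $\gamma=\sum_l c_lm_{p_lq_l}$ as a convex sum of edge molecules with $B^*=\bigcup_{l\in L}[p_l,q_l]$; then $\int_{B^*}|G\gamma|=\sum_{l\in L}|c_l|$ and $\sum_{l\in L}d(p_l,q_l)=\lambda_T(B^*)<\delta$, so $(ii)$ yields $\sum_{l\in L}|c_l|\le\ep+\dist(\gamma,W)\le\ep+\eta$. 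Hence $\int_B|Gw|\le\int_B|G\gamma|+\eta\le\ep+2\eta$, and $\eta\to0$ gives $(i)$.

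\textbf{$(ii)\Rightarrow(iii)$.} This is immediate with $C=1$: if $\|w-\gamma\|\le\ep$ then $\dist(\gamma,W)\le\ep$, so the conclusion of $(ii)$ already reads $\sum_I|a_i|\le\ep+\dist(\gamma,W)\le 2\ep$; one uses neither the support restriction nor a change of $\delta$. It remains to prove $(iii)\Rightarrow(ii)$, which together with $(ii)\Rightarrow(i)$ closes the cycle. Exactly as in $(i)\Rightarrow(ii)$, it suffices to deduce from $(iii)$ that for every $\ep$ there is $\delta$ such that $\lambda_T(E)<\delta$ and $w\in W$ force $\int_E|Gw|\,d\lambda_T\le\ep$. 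The plan is: approximate $w$ by a finitely supported $\gamma_0\in\F_T(\supp(w)\cup\{0\})$ with $\|w-\gamma_0\|$ as small as desired (finitely supported elements of $\F_T(\supp(w)\cup\{0\})$ are dense there and have support inside $\supp(w)\cup\{0\}$), and write $\gamma_0$ as a convex sum of molecules whose endpoints lie in $\supp(w)\cup\{0\}$ — in an $\R$-tree this is always possible, since $G\gamma_0$ is a finite signed combination of arcs which can be viewed as a flow between the points of $\supp(\gamma_0)$, and, arcs in a tree being unique, such a flow decomposes into path flows with no cancellation on edges. Then estimate $\int_E|G\gamma_0|$ via $(iii)$.

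\textbf{The main obstacle.} The hard point is the last step above: to make the hypothesis ``$\sum_I d(x_i,y_i)<\delta$'' of $(iii)$ usable one wants to refine the above decomposition so that the relevant arcs lie (almost) inside $E$, but any such refinement produces molecule endpoints \emph{outside} $\supp(w)$, which $(iii)$ does not see. I expect this to be the crux, and the way around it is to split $E$ into its portion lying within a controlled distance of $\supp(w)$ — where the extra breakpoints can be snapped back to $\supp(w)$ at a cost absorbed into the slack $C\ep$ allowed by $(iii)$ — and the complementary portion, on which one shows $|G\gamma_0|$ (equivalently the flow) is small; a counting argument weighting the arcs of the flow by $|a_i|/d(x_i,y_i)$ should then bound $\int_E|G\gamma_0|$, hence $\int_E|Gw|$. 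Since the auxiliary parameters in the reduction may be taken arbitrarily small, the final bounds can be brought to the exact form $\ep+C\ep$ required by $(iii)$.
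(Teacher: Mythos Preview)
Your treatment of $(i)\Leftrightarrow(ii)$ and $(ii)\Rightarrow(iii)$ is correct and, for $(ii)\Rightarrow(i)$, actually more direct than the paper's route (which goes $(ii)\Rightarrow(iii)\Rightarrow(i)$ and never proves $(ii)\Rightarrow(i)$ on its own). Your observation that for a convex sum one has $|G\gamma|=\sum_i|a_i|\,d(x_i,y_i)^{-1}\indicator{[x_i,y_i]}$ a.e., and hence $|G\gamma_I|\le|G\gamma|$, does the work of the paper's auxiliary Lemma~\ref{l:SumOfCoefficients} just as efficiently.

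The genuine gap is $(iii)\Rightarrow(i)$, which you correctly flag as the crux but do not prove. Your proposed fix---splitting $E$ into a part near $\supp(w)$ where extra breakpoints are ``snapped back'' and a complementary part where ``the flow is small''---does not work as stated. On a point $t$ in the interior of $[x_i,y_i]$, the value $|G\gamma_0|(t)=\sum_{j:\,t\in[x_j,y_j]}|a_j|/d(x_j,y_j)$ has no reason to be small away from $\supp(w)$; and snapping an interior breakpoint back to an endpoint of $[x_i,y_i]$ changes the relevant length by an amount comparable to $d(x_i,y_i)$, which the slack $C\ep$ in $(iii)$ cannot absorb unless the segments are already short. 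The paper's actual solution is different in kind: it proves a refinement lemma (Lemma~\ref{lem:Char(a)_helper}) stating that if $\gamma=\sum a_im_{x_iy_i}$ with $\sum|a_i|\le R$ satisfies ``$\sum_Id(x_i,y_i)<\delta\Rightarrow\sum_I|a_i|<\varepsilon$'', then after partitioning each $[x_i,y_i]$ into subsegments the refined representation satisfies ``$\sum_{\mathcal I}d(\cdot,\cdot)<\delta\varepsilon/R\Rightarrow\sum_{\mathcal I}(\text{coeff})<3\varepsilon+4\varepsilon^2/R$''. Its proof is a dyadic decomposition of the original indices $i$ by the fraction of $[x_i,y_i]$ covered by $\mathcal I$, combined with a packing argument bounding how many short original segments can contribute at each dyadic scale. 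This lemma is exactly what lets one pass from the support-constrained hypothesis $(iii)$ to subdivided molecules whose endpoints need not lie in $\supp(w)\cup\{0\}$, and it is the missing ingredient in your write-up.
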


In order to prove $(i) \implies (ii)$ we need the following lemma.
Observe that for every $x\neq y \in T$ we have $G(m_{xy})=\frac1{d(x,y)}(\indicator{[x,b]}-\indicator{[b,y]})$ where $b\in T$ is the unique point in $[x,y]$ such that $d(0,b)=d(0,[x,y])$.
\begin{lemma}\label{l:SumOfCoefficients}
     Let $\gamma=\sum_{i=1}^n a_i m_{x_iy_i} \in \Free(T)$ be a convex sum of molecules. 
     Let $I \subset \{1,\ldots,n\}$. 
     Then there exists $f_I\in L_\infty(\lambda_T)$, $\|f_I\|=1$ such that 
     $\duality{f_I,G\gamma}\geq \sum_{i\in I}|a_i|$ and $\supp(f_I) \subset \bigcup_{i\in I} [x_i,y_i]$.
\end{lemma}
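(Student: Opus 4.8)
The plan is to build $f_I$ explicitly on the union of the supports of the $G(m_{x_iy_i})$, $i \in I$, and then extend it by zero (or rather extend it to a function on all of $T$ with $L^\infty$-norm $1$; since the target space is $L^\infty(\lambda_T)$ and we only prescribe $\supp(f_I)$, extension by zero is fine). Recall from the observation just before the lemma that for each $i$ we have $G(m_{x_iy_i}) = d(x_i,y_i)^{-1}(\indicator{[x_i,b_i]} - \indicator{[b_i,y_i]})$ where $b_i$ is the point of $[x_i,y_i]$ closest to $0$. The natural candidate is to let $f_I$ equal $\mathrm{sign}(a_i)$ on $[x_i,b_i)$ and $-\mathrm{sign}(a_i)$ on $(b_i,y_i]$, for each $i \in I$; this mimics the sign pattern that makes $\duality{f_I, G(m_{x_iy_i})} = 1$. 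The issue is that the segments $[x_i,y_i]$ for different $i \in I$ need not be disjoint, so this prescription may be inconsistent; the first — and main — step is to resolve these overlaps.

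The key structural fact to exploit is that $T$ is an $\R$-tree. First I would reduce to a finite subtree: replace $T$ by $T' := \bigcup_{i=1}^n [0,x_i]\cup[0,y_i]$, a finite metric tree, which is a disjoint union (up to a $\lambda_T$-null set) of finitely many half-open edges. On each such edge $e$, the function $G\gamma = \sum_i a_i\, G(m_{x_iy_i})$ is constant $\lambda_T$-a.e., say equal to $c_e$, and each $G(m_{x_iy_i})$ is constant on $e$ too, with value $\varepsilon_{i,e}/d(x_i,y_i)$ where $\varepsilon_{i,e}\in\{-1,0,1\}$ according to whether $e\subset[x_i,b_i]$, $e\cap[x_i,y_i]$ is null, or $e\subset[b_i,y_i]$. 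Define $f_I$ on $e$ to be $\mathrm{sign}(c_e)$ if $e$ meets $\bigcup_{i\in I}[x_i,y_i]$ and $0$ otherwise. Then $\|f_I\|_{L^\infty} = 1$ (assuming $I\ne\emptyset$; the degenerate case is trivial), $\supp(f_I)\subset\bigcup_{i\in I}[x_i,y_i]$ by construction, and
\[
\duality{f_I, G\gamma} = \sum_{e} \lambda_T(e)\, |c_e|\, \indicator{\{e\text{ meets }\bigcup_{i\in I}[x_i,y_i]\}}.
\]

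It remains to check that this sum is at least $\sum_{i\in I}|a_i|$. Here I would use that $\gamma$ is a \emph{convex} sum, i.e.\ $\|\gamma\| = \sum_{i=1}^n|a_i|$, which forces the representation to be ``non-cancelling'': on each edge $e$, the contributions $a_i\varepsilon_{i,e}/d(x_i,y_i)$ all have the same sign, so $|c_e| = \sum_i |a_i| |\varepsilon_{i,e}|/d(x_i,y_i)$. (This is the standard fact that $\|G\gamma\|_1 = \sum_e \lambda_T(e)|c_e| = \sum_i |a_i|$ can only hold if there is no cancellation on any edge — it is exactly the characterization of convex series of molecules in an $\R$-tree.) Consequently
\[
\sum_{e\text{ meets }\bigcup_{i\in I}[x_i,y_i]} \lambda_T(e)|c_e| \;\geq\; \sum_{i\in I} \sum_{e\subset[x_i,y_i]} \lambda_T(e)\,\frac{|a_i|}{d(x_i,y_i)} \;=\; \sum_{i\in I}|a_i|\,\frac{\lambda_T([x_i,y_i])}{d(x_i,y_i)} = \sum_{i\in I}|a_i|,
\]
since every edge $e\subset[x_i,y_i]$ for $i\in I$ is indeed an edge meeting $\bigcup_{i\in I}[x_i,y_i]$, and $\lambda_T([x_i,y_i]) = d(x_i,y_i)$. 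This gives $\duality{f_I,G\gamma}\geq\sum_{i\in I}|a_i|$ as desired. The main obstacle, as indicated, is handling the overlaps of the segments cleanly; decomposing the finite subtree into edges and using the no-cancellation property of convex sums of molecules is what makes this manageable, and it is the step where the $\R$-tree structure is essential.
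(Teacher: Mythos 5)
Your proof is correct and takes essentially the same approach as the paper. Both arguments set $f_I$ equal, on $\bigcup_{i\in I}[x_i,y_i]$, to the sign of the underlying $L^1$-density, and both rest on the non-cancellation property of convex sums of molecules: the paper invokes \cite[Theorem~2.4]{RRZ} to guarantee that the signs of the individual $G(a_i m_{x_iy_i})$ agree a.e.\ on overlaps, while you rederive the same fact from the norm equality $\|G\gamma\|_1=\sum_i|a_i|$ via an explicit edge decomposition of the finite subtree --- a slightly more hands-on packaging of the identical idea. (One cosmetic point: ``$e$ meets $\bigcup_{i\in I}[x_i,y_i]$'' should be read as ``$\lambda_T(e\cap[x_i,y_i])>0$ for some $i\in I$'', which after subdividing at the endpoints and branch points amounts to $e\subset[x_i,y_i]$ for some $i\in I$; this is what your last inequality implicitly uses.)
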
 
\begin{proof}
We may assume that $a_i\neq 0$ for every $i\in \set{1,\ldots,n}$.
Then, notice that $\supp(G(a_im_{x_iy_i}))=[x_i,y_i]$. 
Since $\gamma$ is a convex sum of molecules, \cite[Theorem~2.4]{RRZ}
yields that $\lambda_T([x_i,y_i]\cap [x_j,y_j])>0$ implies that $\mathrm{sign}(G(a_i m_{x_iy_i}))(t)=\mathrm{sign}(G(a_j m_{x_jy_j}))(t)$ for almost every $t\in [x_i,y_i]\cap [x_j,y_j]$.
We can thus define $f_I(t):=\mathrm{sign}(G(a_i m_{x_iy_i}))$ whenever $t\in [x_i,y_i]$ for some $i\in I$, and $f_I(t):=0$ otherwise.
Now 
$$
\duality{f_I,G\gamma}=\sum_{i\in I}\abs{a_i}+\sum_{i \notin I} \abs{a_i}\sum_{j\in I}\lambda_T([x_i,y_i]\cap[x_j,y_j]) .
$$
\end{proof}

\begin{proof}[Proof of the downward pointing implications of Proposition~\ref{Charac(a)}]
(i) $\implies$ (ii):
Let us fix $\varepsilon>0$ and let $\delta>0$ be obtained from (i).
Let $\gamma\in F(M)$ be a convex sum of molecules.
Let $I \subset \{1,\ldots,n\}$ such that $\sum d(x_i,y_i)<\delta$.
By Lemma~\ref{l:SumOfCoefficients} we have for any $w\in W$
$$
\begin{aligned}
\sum_{i\in I}\abs{a_i}&\leq \duality{f_I,G\gamma}\leq \abs{\duality{ f_I,Gw}}+\abs{\duality{ f_I,G\gamma-Gw}}\\
&\leq \varepsilon+\norm{\gamma-w}
\end{aligned}
$$
where the last inequality follows from the uniform integrability and the fact that $\lambda_T(\supp(f_I))\leq \lambda_T(\bigcup_{i\in I}[x_i,y_i])<\delta$.
Since this holds for every $w\in W$, we obtain $\sum_{i\in I}\abs{a_i}\leq \varepsilon+\dist(\gamma,W)$. 

(ii) implies clearly $\forall \ep >0$, $\exists \delta >0$ such that for all $C>0$ and for all $w \in W$ and for every convex sum of molecules $\sum_{i=1}^n a_i m_{x_i  y_i} \in \F(T)$, $\|w - \sum_i a_i m_{x_i y_i}\| \leq C\ep$, $\forall I \subset \{1, \ldots , n\}$,         \[\sum_I d(x_i ,y_i) < \delta \implies \sum_I |a_i| \le \ep+C\ep.\]
Therefore (ii) $\implies$ (iii).
\end{proof}

The proof of (iii)$\implies$ (i) uses the following simple convexity argument valid in arbitrary metric space $M$. 
A sequence $(u_1,u_2,\ldots, u_{k+1}) \subset M$ of pairwise distinct elements of $M$ is called a \emph{partition of the couple of points} $x\neq y \in M$ if $d(x,y)=\sum_{i=1}^k d(u_i,u_{i+1})$, $u_1=x$ and $u_{k+1}=y$.
Then, we can express the elementary molecule $m_{x y}$ as a convex combination of the smaller molecules:
$$ m_{xy} = \sum_{i=1}^{k} \dfrac{d(u_i ,u_{i+1})}{d(x,y)} m_{u_i u_{i+1}}.$$
We also require the following lemma.

\begin{lemma}\label{lem:Char(a)_helper}
    Let $\delta,\varepsilon>0$, $R\ge 1$, and $\gamma:=\sum_{i=1}^{n} a_i m_{x_i y_i}\in\lipfree{M}$ be such that $\sum_{i=1}^{n} \abs{a_i}\le R$ and for every $I \subset \{1,\ldots,n\}$ we have
    \begin{equation*}
        \sum_{i\in I} d(x_i,y_i)<\delta \implies \sum_{i\in I} |a_i| < \varepsilon.
    \end{equation*}
    For every $i\in I$, let $(u_1^i,\ldots,u_{k_i+1}^i)$ be a partition of the couple of points $(x_i,y_i)$. Then
    \begin{equation}\label{e:GammaUnderRefinement}
        \gamma=\sum_{i=1}^{n} a_i\sum_{j=1}^{k_i}\frac{d(u_{j}^i,u_{j+1}^i)}{d(x_i,y_i)} m_{u_j^i u_{j+1}^i}
    \end{equation}
    and for every $\mathcal I\subseteq \set{(i,j):i\in\set{1,\ldots,n},j\in\set{1,\ldots,k_i}}$ we have
    \begin{equation*}
        \sum_{(i,j)\in \mathcal{I}} d(u_{j}^i,u_{j+1}^i)<\frac{\delta\varepsilon}{R} \implies \sum_{(i,j)\in \mathcal{I}} |a_i|\frac{d(u_{j}^i,u_{j+1}^i)}{d(x_i,y_i)} < 3\varepsilon+\frac{4\varepsilon^2}{R}.
    \end{equation*}
\end{lemma}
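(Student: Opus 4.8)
The plan is to prove the identity \eqref{e:GammaUnderRefinement} first and then establish the quantitative implication by carefully analyzing how the weights redistribute under refinement.

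First I would verify \eqref{e:GammaUnderRefinement}. This is immediate: each elementary molecule $m_{x_iy_i}$ is the convex combination $\sum_{j=1}^{k_i}\frac{d(u_j^i,u_{j+1}^i)}{d(x_i,y_i)}m_{u_j^iu_{j+1}^i}$ along its partition, as recalled just before the lemma, so substituting into $\gamma=\sum_i a_im_{x_iy_i}$ gives the claimed expression. (One should note that the new representation may no longer be a convex sum of molecules in the technical sense, but that is irrelevant: the lemma only asserts an equality in $\F(M)$ and a combinatorial inequality on the coefficients.)

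The core of the argument is the implication for $\mathcal I$. Suppose $\sum_{(i,j)\in\mathcal I}d(u_j^i,u_{j+1}^i)<\delta\varepsilon/R$. I would split the index set $\{1,\dots,n\}$ into the ``bad'' indices $i$ for which the fragments of $[x_i,y_i]$ selected by $\mathcal I$ constitute a substantial fraction of $[x_i,y_i]$, say $B=\{i:\sum_{j:(i,j)\in\mathcal I}d(u_j^i,u_{j+1}^i)\ge \varepsilon\, d(x_i,y_i)/R\}$, and the ``good'' indices $G$, the complement. For the good indices, for each $i\in G$ the contribution $\sum_{j:(i,j)\in\mathcal I}|a_i|\frac{d(u_j^i,u_{j+1}^i)}{d(x_i,y_i)}$ is at most $|a_i|\cdot\varepsilon/R$, so summing over all $i$ (using $\sum_i|a_i|\le R$) bounds this part by $\varepsilon$. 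For the bad indices, the point is that $B$ cannot be large: since $\sum_{i\in B}\frac{\varepsilon}{R}d(x_i,y_i)\le \sum_{i\in B}\sum_{j:(i,j)\in\mathcal I}d(u_j^i,u_{j+1}^i)\le \delta\varepsilon/R$, we get $\sum_{i\in B}d(x_i,y_i)<\delta$, and the hypothesis of the lemma (applied with $I=B$) yields $\sum_{i\in B}|a_i|<\varepsilon$. The contribution of the bad indices to the sum in question is at most $\sum_{i\in B}|a_i|\sum_{j:(i,j)\in\mathcal I}\frac{d(u_j^i,u_{j+1}^i)}{d(x_i,y_i)}\le\sum_{i\in B}|a_i|<\varepsilon$. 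Adding the two bounds gives roughly $2\varepsilon$, comfortably below the stated $3\varepsilon+4\varepsilon^2/R$; the slack in the target inequality (the extra $\varepsilon$ and the quadratic term) suggests the authors are being deliberately generous, perhaps to absorb a case where one has $\le$ rather than $<$ in the hypothesis of the lemma, or to handle the possibility that some $d(x_i,y_i)$ with $i\in B$ is itself smaller than the threshold. The main obstacle is bookkeeping: getting the right thresholds for the split $B$ versus $G$ so that both pieces are controlled simultaneously, and tracking exactly which of the inequalities are strict. I would expect the clean choice of the bad/good cutoff (a multiple of $\varepsilon d(x_i,y_i)/R$) to make everything fall into place, with the extra terms $3\varepsilon+4\varepsilon^2/R$ being a safe over-estimate rather than a tight bound.
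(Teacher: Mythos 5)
Your argument is correct and in fact yields a sharper bound of $2\varepsilon$, and it takes a genuinely different and more elementary route than the paper. The paper's proof partitions the index set dyadically according to the fraction $\sum_{j:(i,j)\in\mathcal I} d(u^i_j,u^i_{j+1})/d(x_i,y_i)$, putting the indices with $d(x_i,y_i)\ge\delta$ into a separate class $J_0$ and splitting the rest into dyadic scales $J_m$; for each scale it invokes a subdivision lemma to break $J_m$ into $l_m$ groups each of total $d$-length less than $\delta$, bounds $\sum_{i\in J_m}|a_i|<l_m\varepsilon$ by applying the hypothesis to each group, and then estimates $l_m$ via the constraint on $\sum_{(i,j)\in\mathcal I}d(u_j^i,u_{j+1}^i)$. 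Summing the resulting geometric series is what produces the loose constant $3\varepsilon + 4\varepsilon^2/R$. Your single-threshold split into $B=\{i:\sum_{j:(i,j)\in\mathcal I}d(u_j^i,u_{j+1}^i)\ge\varepsilon d(x_i,y_i)/R\}$ and its complement $G$ bypasses the dyadic bookkeeping entirely: on $G$ the refinement fraction is pointwise $<\varepsilon/R$ so the budget $\sum_i|a_i|\le R$ caps the contribution at $\varepsilon$, and the defining inequality of $B$ combined with the constraint on $\mathcal I$ forces $\sum_{i\in B}d(x_i,y_i)<\delta$, so a \emph{single} application of the hypothesis (to $I=B$) gives $\sum_{i\in B}|a_i|<\varepsilon$, and the trivial bound $\sum_{j:(i,j)\in\mathcal I}d(u_j^i,u_{j+1}^i)/d(x_i,y_i)\le1$ finishes the bad part. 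This buys both simplicity and a better constant; the paper's weaker bound $3\varepsilon+4\varepsilon^2/R$ is still adequate for its downstream use in Proposition~\ref{Charac(a)}, so nothing is lost in the paper, but your approach could replace it outright.
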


\begin{proof}
The new representation~\eqref{e:GammaUnderRefinement} of $\gamma$ is clear.
Now, fix $$\mathcal{I}\subseteq \set{(i,j):i\in\set{1,\ldots,n},j\in\set{1,\ldots,k_i}}$$ and assume that 
    \begin{equation*}
        \sum_{(i,j)\in \mathcal{I}} d(u_{j}^i,u_{j+1}^i)<\frac{\delta\varepsilon}{R}.
    \end{equation*}
    
    Let $J=\set{i\in\set{1,\ldots,n}: \exists j\in \set{1,\ldots,k_i+1}, (i,j)\in \mathcal I}$ and let us divide $J$ into pairwise disjoint subsets. Let
    \begin{equation*}
        J_0=\set{i\in J: d(x_i,y_i)\ge\delta}
    \end{equation*}
    and for $m\in\N$ let
    \begin{equation*}
        J_m=\set{i\in J: d(x_i,y_i)<\delta, \frac{1}{2^m}<\sum_{j:(i,j) \in \mathcal I}\frac{d(u_{j}^i,u_{j+1}^i)}{d(x_i,y_i)}\le\frac{1}{2^{m-1}}}.
    \end{equation*}
    Clearly $J_0,J_1,\ldots$ are pairwise disjoint and $J=\bigcup_{m=0}^\infty J_m$. 
    Now for $i\in J_0$ we have 
    \begin{equation*}
        \sum_{j:(i,j) \in \mathcal I}\frac{d(u_{j}^i,u_{j+1}^i)}{d(x_i,y_i)}\le  \frac{1}{\delta}\sum_{j:(i,j) \in \mathcal I}d(u_{j}^i,u_{j+1}^i)\le \frac{1}{\delta}\frac{\delta\varepsilon}{R}=\frac{\varepsilon}{R}.
    \end{equation*}
    Thus
    \begin{equation*}
        \sum_{i\in J_0}\abs{a_i}\sum_{j:(i,j) \in \mathcal I}\frac{d(u_{j}^i,u_{j+1}^i)}{d(x_i,y_i)}\le \frac{\varepsilon}{R}\sum_{i\in J_0}\abs{a_i}\le \varepsilon.
    \end{equation*}
    For $m\in \N$ we have
    \begin{equation*}
        \sum_{i\in J_m}d(x_i,y_i)< 2^m\sum_{i\in J_m}\sum_{j:(i,j) \in \mathcal I}d(u_{j}^i,u_{j+1}^i).
    \end{equation*}
    Note that for every $i\in J_m$ we have $d(x_i,y_i)<\delta$. Thus we can divide the set $J_m$ into $l_m\geq 1$ pairwise disjoint subsets $A_1^m,\ldots,A_{l_m}^m$ such that $J_m=\bigcup_{j=1}^{l_m}A_j^m$, for every $j\in \set{1,\ldots,l_m}$ we have 
    $ \sum_{i\in A_j^m}d(x_i,y_i)<\delta$, and for any $j,j'\in \set{1,\ldots,l_m}$ with $j\neq j'$ we have $\sum_{i\in A_j^m\cup A_{j'}^m}d(x_i,y_i)\ge\delta$. Then 
    \begin{equation*}
        \frac{\delta}{2}(l_m-1) \le \sum_{i\in J_m}d(x_i,y_i)\le  
        2^m\sum_{i\in J_m}\sum_{j:(i,j) \in \mathcal I}d(u_{j}^i,u_{j+1}^i)
    \end{equation*}
    By assumption $\sum_{i\in A_j^m}\abs{a_i}<\varepsilon$ and thus
    \begin{equation*}
        \sum_{i\in J_m}\abs{a_i}=\sum_{j=1}^{l_m}\sum_{i\in A_j^m}\abs{a_i}<l_m\varepsilon.
    \end{equation*}
    Now
    \begin{equation*}
        \sum_{i\in J_m}\abs{a_i}\sum_{j:(i,j) \in \mathcal I}\frac{d(u_{j}^i,u_{j+1}^i)}{d(x_i,y_i)}\le \frac{1}{2^{m-1}}\sum_{i\in J_m}\abs{a_i}< \frac{l_m\varepsilon}{2^{m-1}}.
    \end{equation*}
    Therefore:
    \begin{align*}
        \sum_{(i,j)\in \mathcal I} |a_i|\frac{d(u_{j}^i,u_{j+1}^i)}{d(x_i,y_i)}&=\sum_{i\in J_0} |a_i|\sum_{j:(i,j) \in \mathcal I}\frac{d(u_{j}^i,u_{j+1}^i)}{d(x_i,y_i)}+\sum_{m=1}^\infty\sum_{i\in J_m} |a_i|\sum_{j:(i,j) \in \mathcal I}\frac{d(u_{j}^i,u_{j+1}^i)}{d(x_i,y_i)} \\
        &< \varepsilon+ \sum_{m=1}^\infty\frac{l_m\varepsilon}{2^{m-1}} = 3\varepsilon+ \sum_{m=1}^\infty\frac{(l_m-1)\varepsilon}{2^{m-1}} \\
        &\le 3\varepsilon+\sum_{m=1}^\infty \frac{\varepsilon}{2^{m-1}}\frac{2}{\delta} 2^m\sum_{i\in J_m}\sum_{j:(i,j) \in \mathcal I}d(u_{j}^i,u_{j+1}^i)\\
        &= 3\varepsilon+\frac{4\varepsilon}{\delta}\sum_{m=1}^\infty \sum_{i\in J_m}\sum_{j:(i,j) \in \mathcal I}d(u_{j}^i,u_{j+1}^i) \\
        &\le 3\varepsilon+\frac{4\varepsilon}{\delta} \sum_{(i,j)\in \mathcal I}d(u_{j}^i,u_{j+1}^i)\\
        &\le 3\varepsilon+\frac{4\varepsilon^2}{R}.
    \end{align*}
\end{proof}
 
\begin{proof}[Proof of $(iii) \implies (i)$ of Proposition \ref{Charac(a)}] 

	Let $C>0$ be obtained by $(iii)$. 
    Let $\ep>0$ be fixed and $\delta \in (0,1)$ be obtained by$(iii)$. We set $R=\sup_{w\in W}\|w\|+C\varepsilon$.  
    Consider a measurable subset $B \subset T$ with $\lambda_T(B) < \frac{\delta(\varepsilon+C\varepsilon)}{R}$. We will find a bound for $\int_B |I(w)|\,d\lambda_T$; to do so, by definition of the length measure (see \cite{Godard}), we may assume that $B$ is a finite union of segments, $B = \bigcup_{k=1}^{N} [a_k,b_k]$, with the sets $(a_k,b_k)$ being pairwise disjoint.  Pick any $w \in W$ and let $\gamma = \sum_{i=1}^{n} a_i m_{x_i y_i}\in\F(\supp(w)\cup\set{0})$ be a convex sum of molecules such that $\|w - \gamma\| \leq C\ep$.
    Notice that $\sum_{i=1}^n \abs{a_i}=\norm{\gamma}\leq R$.
    We may assume that all segments $[x_i,y_i]$ are disjoint except for possibly the endpoints, by subdividing each segment if necessary, using the convexity argument outlined earlier, and then grouping and adding equal terms together. Similarly, we assume that the sets $[x_i,y_i]$ and $[a_k,b_k]$ either coincide or satisfy $\lambda_T([x_i,y_i] \cap [a_k,b_k])=0$ (at most finitely many points in the intersection).
    The new representation of $\gamma$ might contain molecules whose endpoints are not in the set $\supp(w)\cup\set{0}$, so we must enlist the help of Lemma \ref{lem:Char(a)_helper} to see that for every $I\subseteq \set{1,\ldots,n}$ we have
    \begin{equation*}
    (\star) \quad
        \sum_I d(x_i ,y_i)<\frac{\delta(\varepsilon+C\varepsilon)}{R} \implies \sum_I |a_i| < 3(\varepsilon+C\varepsilon)+\frac{4(\varepsilon+C\varepsilon)^2}{R}.
    \end{equation*}
    From here, some direct calculations will yield the desired result:
	\begin{align*}
		\int_B |G(w)|\,d\lambda_T &\leq C\ep\lambda_T(B) +\sum_{k=1}^N \int_{a_k}^{b_k}|G(\gamma)|\,d \lambda_T\\ &\leq C\ep \lambda_T(B) +  \sum_{k=1}^N \sum_{i=1}^{n} \int_{a_k}^{b_k} |a_i| \dfrac{\indicator{[x_i,y_i]}}{d(x_i,y_i)} \, d \lambda_T \\
		&<  C\ep + \sum_{k=1}^N \sum_{i=1}^{n} \left\{\begin{array}{cc}
			|a_{i}| & \text{ if } [x_i,y_i] = [a_k , b_k] \\
			0 & \text{ otherwise}
		\end{array}
		\right. \\
        &\leq 3\ep+4C\varepsilon+\frac{4(\varepsilon+C\varepsilon)^2}{R}.
	\end{align*}
	The very last inequality follows from
    $$\sum_{i \,:\, \exists k, [x_i , y_i]=[a_k,b_k]} d(x_i,y_i) \leq \sum_{k=1}^N d(a_k,b_k) \leq \frac{\delta(\varepsilon+C\varepsilon)}{R}$$
    and thus property $(\star)$ provides the last estimate.
\end{proof}

Next we aim to give other equivalent conditions for $(ii)$ and $(iii)$. To that end, we will benefit from the following definition.
\begin{definition}
Let $M$ be a metric space.
For $\delta>0$ we define the set     
\begin{equation*}
    \SSM{\delta}{M}=\set{\sum_{i=1}^n a_i m_{x_i  y_i} \in \F(M) : \sum_{i=1}^n d(x_i  y_i)<\delta, \sum_{i=1}^n \abs{a_i}= \left\|\sum_{i=1}^n a_i m_{x_i  y_i}\right\|}.
\end{equation*}
\end{definition}

\begin{remark}
Note that, at least for $\R$-trees, we have
\begin{equation*}
    \SSM{\delta}{T}=\set{\sum_{i=1}^n a_i m_{x_i  y_i} \in \F(T) : \sum_{i=1}^n  d(x_i,  y_i)<\delta}.
\end{equation*}
Indeed, let $\sum_{i=1}^n a_i m_{x_i  y_i} \in \F(M)$ be such that $\sum_{i=1}^n  d(x_i,  y_i)<\delta$. We may assume that all segments $[x_i,y_i]$ either coincide or their intersections contain a finite amount of points, by subdividing each segment if necessary and using the convexity argument outlined earlier. Note that in this process the sum $\sum_{i=1}^n d(x_i,  y_i)$ does not change. Next we add together the equal terms. The sum can decrease in this process, but not increase, and thus for the new representation we still have the desired inequality. Furthermore, the new representation is a convex sum of molecules (even an $\ell_1$ sum, see the comment at the start of Section \ref{section:Trees}).
\end{remark}

The set $\SSM{\delta}{M}$ will be used in the following proposition. 
\begin{proposition}\label{Char(a)_equivalent}
    Let $T$ be a $\R$-tree. For a bounded $W \subset \F(T)$, the following are equivalent:
	\begin{enumerate}
	    \item[$(i)$] $\forall C>0$, $\forall \ep >0$, $\exists \delta >0$ such that for all $w \in W$ and for every convex sum of molecules $\sum_{i=1}^n a_i m_{x_i  y_i} \in \F(T)$, $\|w - \sum_i a_i m_{x_i y_i}\| \leq C\ep$, $\forall I \subset \{1, \ldots , n\}$, $\sum_I d(x_i ,y_i) < \delta$ implies that $\sum_I |a_i| \le \ep+C\ep$.
        \item[$(ii)$] $\exists C>0$, $\forall \ep >0$, $\exists \delta >0$ such that for all $w \in W$ and for every convex sum of molecules $\sum_{i=1}^n a_i m_{x_i  y_i} \in \F(T)$, $\|w - \sum_i a_i m_{x_i y_i}\| \leq C\ep$, $\forall I \subset \{1, \ldots , n\}$, $\sum_I d(x_i ,y_i) < \delta$ implies that $\sum_I |a_i| \le \ep+C\ep$.
        \item[$(iii)$] $\forall C>0$, $\forall \ep >0$, $\exists \delta >0$ such that for all $w \in W$ and for every convex sum of molecules $\sum_{i=1}^n a_i m_{x_i  y_i} \in \F(\supp(w)\cup\set{0})$, $\|w - \sum_i a_i m_{x_i y_i}\| \leq C\ep$, $\forall I \subset \{1, \ldots , n\}$, $\sum_I d(x_i ,y_i) < \delta$ implies that $\sum_I |a_i| \le \ep+C\ep$.
        \item[$(iv)$] $\exists C>0$, $\forall \ep >0$, $\exists \delta >0$ such that for all $w \in W$ and for every convex sum of molecules $\sum_{i=1}^n a_i m_{x_i  y_i} \in \F(\supp(w)\cup\set{0})$, $\|w - \sum_i a_i m_{x_i y_i}\| \leq C\ep$, $\forall I \subset \{1, \ldots , n\}$, $\sum_I d(x_i ,y_i) < \delta$ implies that $\sum_I |a_i| \le \ep+C\ep$.
        \item[$(v)$] $\forall \ep >0$, $\exists \delta >0$ such that $\dist\left(w,\SSM{\delta}{T}\right)\ge \norm{w}-\varepsilon$  for every $w\in W$.
        \item[$(vi)$] $\forall \ep >0$, $\exists \delta >0$ such that $\dist\left(w,\SSM{\delta}{\supp(w)\cup\set{0}}\right)\ge \norm{w}-\varepsilon$ for every $w \in W$.
	\end{enumerate}
\end{proposition}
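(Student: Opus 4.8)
The plan is to connect all six conditions to the equi-integrability condition $(i)$ of Proposition~\ref{Charac(a)}, which I abbreviate as $(E)$: most of the equivalence among $(i)$--$(iv)$ is already contained in that proposition, and the new work is to splice in the ``distance to $\SSM{\delta}{\cdot}$'' reformulations $(v)$ and $(vi)$. First I would record the routine implications. The predicate in $(iii)$ (resp.\ $(iv)$) is obtained from the one in $(i)$ (resp.\ $(ii)$) by restricting attention to convex sums of molecules with all endpoints in $\supp(w)\cup\set{0}$, so $(i)\Rightarrow(iii)$ and $(ii)\Rightarrow(iv)$; and specializing the universally quantified constant (say to $C=1$) yields $(i)\Rightarrow(ii)$ and $(iii)\Rightarrow(iv)$. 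Next, $(iv)$ is precisely condition $(iii)$ of Proposition~\ref{Charac(a)}, so by that proposition $(iv)\Leftrightarrow(E)$. To close the loop among $(i)$--$(iv)$ it then remains to check $(E)\Rightarrow(i)$; but this is exactly the displayed implication obtained in the proof of Proposition~\ref{Charac(a)} on the way from its $(i)$ to its $(iii)$ (which even provides $\delta$ independently of $C$), and alternatively it follows from a single application of Lemma~\ref{l:SumOfCoefficients}: if $\gamma=\sum_{i=1}^n a_i m_{x_iy_i}$ is a convex sum of molecules with $\norm{w-\gamma}\le C\ep$ and $I\subseteq\set{1,\dots,n}$ satisfies $\sum_{i\in I}d(x_i,y_i)<\delta$, then the witness $f_I$ has $\lambda_T(\supp f_I)<\delta$, so that $\sum_{i\in I}\abs{a_i}\le\duality{f_I,G\gamma}\le\int_{\supp f_I}\abs{Gw}\,d\lambda_T+\norm{Gw-G\gamma}_{L^1(\lambda_T)}<\ep+C\ep$ as soon as $\delta$ is chosen from $(E)$ for the value $\ep$. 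Hence $(i)$, $(ii)$, $(iii)$, $(iv)$ and $(E)$ are mutually equivalent.

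The next step is $(E)\Rightarrow(v)$, where Godard's isometry $G$ intervenes. Fix $\ep>0$ and let $\delta>0$ be the constant furnished by $(E)$. For $\gamma=\sum_{i=1}^n a_i m_{x_iy_i}\in\SSM{\delta}{T}$, put $A:=\bigcup_{i=1}^n[x_i,y_i]$; then $\supp(G\gamma)\subseteq A$ and $\lambda_T(A)\le\sum_{i=1}^n d(x_i,y_i)<\delta$, so for every $w\in W$
\[
\norm{w-\gamma}=\norm{Gw-G\gamma}_{L^1(\lambda_T)}\ge\int_{T\setminus A}\abs{Gw}\,d\lambda_T=\norm{w}-\int_A\abs{Gw}\,d\lambda_T>\norm{w}-\ep ,
\]
so $\dist(w,\SSM{\delta}{T})\ge\norm{w}-\ep$. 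The implication $(v)\Rightarrow(vi)$ is immediate, since $\SSM{\delta}{\supp(w)\cup\set{0}}\subseteq\SSM{\delta}{T}$ forces $\dist(w,\SSM{\delta}{\supp(w)\cup\set{0}})\ge\dist(w,\SSM{\delta}{T})$.

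It remains to close the cycle with $(vi)\Rightarrow(iv)$, which I would do with $C:=\tfrac13$. Given $\ep>0$, apply $(vi)$ with parameter $\ep/3$ to obtain $\delta>0$. Let $w\in W$, let $\gamma=\sum_{i=1}^n a_i m_{x_iy_i}$ be a convex sum of molecules with $x_i,y_i\in\supp(w)\cup\set{0}$ and $\norm{w-\gamma}\le\ep/3$, and let $I\subseteq\set{1,\dots,n}$ with $\sum_{i\in I}d(x_i,y_i)<\delta$. Since a partial sum of a convex sum of molecules is again a convex sum of molecules, $\gamma_I:=\sum_{i\in I}a_i m_{x_iy_i}$ satisfies $\norm{\gamma_I}=\sum_{i\in I}\abs{a_i}$, $\norm{\gamma-\gamma_I}=\norm{\gamma}-\norm{\gamma_I}$, and $\gamma_I\in\SSM{\delta}{\supp(w)\cup\set{0}}$. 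Then $(vi)$, the triangle inequality, and $\norm{\gamma}\le\norm{w}+\norm{w-\gamma}$ give
\[
\norm{w}-\tfrac{\ep}{3}\le\norm{w-\gamma_I}\le\norm{w-\gamma}+\norm{\gamma}-\norm{\gamma_I}\le\norm{w}+\tfrac{2\ep}{3}-\norm{\gamma_I},
\]
hence $\sum_{i\in I}\abs{a_i}=\norm{\gamma_I}\le\ep\le\ep+C\ep$, which is $(iv)$.

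The main difficulty is not any single implication but the bookkeeping of quantifiers: one must notice that the constant $C$ in $(ii)$, $(iv)$, $(vi)$ is existentially quantified and may therefore be taken as small as convenient (this is what makes the elementary estimate in $(vi)\Rightarrow(iv)$ go through), and one must exploit repeatedly that partial sums of convex sums of molecules are again convex sums, so that the ``mass'' $\sum_{i\in I}\abs{a_i}$ genuinely equals $\norm{\gamma_I}$ and can be compared against a distance to $\SSM{\delta}{\cdot}$. The passage between the ``finitely many molecules'' formulations and the ambient $\Free(T)$ formulations introduces no new idea, as it is already packaged inside Proposition~\ref{Charac(a)}.
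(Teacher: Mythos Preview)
Your proof is correct. The overall scaffolding (the trivial implications $(i)\Rightarrow(ii)\Rightarrow(iv)$, $(i)\Rightarrow(iii)\Rightarrow(iv)$, $(v)\Rightarrow(vi)$, and the loop closure via Proposition~\ref{Charac(a)}) matches the paper's, and your argument for $(vi)\Rightarrow(iv)$ is the same computation as the paper's, written directly rather than by contrapositive. The one genuine departure is in reaching $(v)$: the paper proves $(i)\Rightarrow(v)$ by a contrapositive combinatorial argument, taking a near-optimal $\gamma\in\SSM{\delta}{T}$, approximating $w$ by a second convex sum, subdividing all segments so they align, and then exploiting that disjointly supported molecules in a tree form an isometric $\ell_1$-basis to control $\sum_{i\notin I}\abs{b_i}$. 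You instead route through the equi-integrability hub $(E)$ and apply Godard's isometry directly: since $\supp(G\gamma)\subseteq A:=\bigcup_i[x_i,y_i]$ with $\lambda_T(A)<\delta$, the inequality $\norm{w-\gamma}\ge\norm{w}-\int_A\abs{Gw}\,d\lambda_T>\norm{w}-\ep$ is immediate. Your approach is shorter and avoids the segment-subdivision bookkeeping entirely; the paper's approach, on the other hand, stays closer to the ``intrinsic'' molecule language and does not need to unpack the isometry again. Both are valid, and the remaining steps agree.
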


\begin{proof}
    The implications $(i) \implies (ii) \implies (iv)$, $(i) \implies (iii) \implies (iv)$, and $(v) \implies (vi)$ are obvious, and the implication $(iv) \implies (i)$ follows from Proposition \ref{Charac(a)}.

    \medskip
	$(vi) \implies (iv)$: We will prove the contrapositive.
    Assume that $(iv)$ fails. Fix $C<1$ and obtain $\ep>0$ witnessing the failure of $(iv)$. 
    Fix $\delta>0$ and find $w\in W$, a convex  sum of molecules $\sum_{i=1}^n a_i m_{x_i  y_i} \in \F(\supp(w)\cup\set{0})$, and $I \subset \{1, \ldots , n\}$ such that $\|w - \sum_i a_i m_{x_i y_i}\| \leq C\ep$, $\sum_I d(x_i ,y_i) < \delta$, and $\sum_I |a_i| > \ep+C\ep$. Then 
    $\sum_I a_i m_{x_i y_i}\in \SSM{\delta}{\supp(w)\cup\set{0}}$ and 
    \begin{align*}
        \norm{w-\sum_I a_i m_{x_i y_i}}&\le \norm{w-\sum_{i=1}^n a_i m_{x_i y_i}}+\sum_{i\notin I}\abs{a_i}\\
        &\le C\varepsilon +\sum_{i=1}^n \abs{a_i}-\sum_I \abs{a_i}< C\varepsilon+\norm{\sum_{i=1}^n a_i m_{x_i y_i}}-(\varepsilon+C\varepsilon)\\
        &\le C\varepsilon+\norm{w}+C\varepsilon-(\varepsilon+C\varepsilon)=\norm{w}-(1-C)\varepsilon.
    \end{align*}
    Hence we have now shown that for $\varepsilon':=(1-C)\varepsilon$ and for every $\delta >0$ there exists $w\in W$ such that $$\dist\left(w,\SSM{\delta}{\supp(w)\cup\set{0}}\right)< \norm{w}-\varepsilon',$$ i.e. the negation of the condition $(vi)$.

    \medskip
	$(i) \implies (v)$:
    We will prove the contrapositive. Assume that 
    $(v)$ fails and let $\ep>0$ witness its failure. 
    Fix $\delta>0$ and let $w\in W$ be such that $\dist\left(w,\SSM{\delta}{T}\right)< \norm{w}-\varepsilon$. Let $\gamma:=\sum_{i=1}^n a_i m_{x_i  y_i}\in \SSM{\delta}{T}$ be such that $\sum_{i=1}^n d(x_i,y_i)<\delta$ and $\norm{w-\gamma}<\norm{w}-\varepsilon$. 
    Let $\sum_{i=1}^k b_i m_{u_i  v_i}\in \F(T)$ be a convex sum of molecules such that 
    $$\norm{w-\sum_{i=1}^k b_i m_{u_i  v_i}}<\frac{\varepsilon}{4}.$$ Then $\norm{\gamma-\sum_{i=1}^k b_i m_{u_i  v_i}}<\norm{w}-\frac{3\varepsilon}{4}$. By using the convexity argument introduced earlier, we may assume that $[u_i,v_i]$ and $[x_j,y_j]$ either coincide or are disjoint, except possibly at endpoints, and that all pairs $(u_i,v_i)$ are different.
    Let 
    \begin{equation*}
        I:=\set{i\in\set{1,\ldots,k}:\exists j\in\set{1,\ldots,n}, [u_i,v_i]=[x_j,y_j]}.
    \end{equation*}
    Then $\sum_Id(u_i,v_i)\le \sum_{i=1}^n d(x_i,y_i)<\delta$ and
    \begin{equation*}
        \norm{w}-\frac{3\varepsilon}{4}>\norm{\gamma-\sum_{i=1}^k b_i m_{u_i  v_i}} \ge \sum_{i\notin I}\abs{b_i} = \sum_{i=1}^k\abs{b_i} -\sum_I\abs{b_i}>\norm{w}-\frac{\varepsilon}{4}- \sum_I\abs{b_i},
    \end{equation*}
    where the second inequality follows from the fact that molecules with disjoint support are in $\ell_1$ sum, see the start of Section \ref{section:Trees}. This yields $\sum_I\abs{b_i}>\frac{\varepsilon}{2}$. As $\delta>0$ was arbitrary, we now get a contradiction with condition $(i)$ when picking $C=1$ and $\varepsilon=\frac{\varepsilon}{4}$.
\end{proof}

Several of these implications actually hold for arbitrary metric spaces:
$(i)\implies(ii)\implies(iv)$, $(i)\implies(iii)\implies(iv)$,
$(v)\implies(vi)\implies(iv)$ and $(v)\implies(ii)$.
This leads to the following question.

\begin{question}
    Do the equivalences in Proposition \ref{Char(a)_equivalent} hold for arbitrary metric spaces? Does weak pre-compactness imply any (or all) of them?
\end{question}

\bigskip

We now turn to condition $(b)$ in the definition of equi-integrability mentioned above. It corresponds to being ``almost finitely generated'' in the following sense.

\begin{proposition} \label{Charac(b)}
	Let $T$ be an $\R$-tree. 
    For a bounded $W \subset \F(T)$, the following are equivalent:
	\begin{enumerate}[$(i)$]
		\item For any $\ep >0$, there exists a measurable set $B \subset T$ with $\lambda_T(B) < \infty$ and $\int_{T\setminus B} |Gw|\,d\lambda_T < \ep$ for all $w \in W$.
		\item For any $\ep >0$, there exists a finitely generated subtree $T':=\bigcup_{i=1}^n [0,z_i] \subset T$ such that $W \subset \F(T') + \ep B_{\F(T)}$.
	\end{enumerate}
\end{proposition}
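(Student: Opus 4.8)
The plan is to translate the two conditions directly through Godard's isometry $G$ and exploit the fact that the length measure of a finitely generated subtree is finite and, conversely, that any finite-measure measurable subset of $T$ is, up to small error, contained in a finitely generated subtree. First I would prove $(ii)\implies(i)$, which is the easy direction: if $W\subset\F(T')+\ep B_{\F(T)}$ with $T'=\bigcup_{i=1}^n[0,z_i]$, set $B:=T'$. Then $\lambda_T(B)=\lambda_{T'}(T')\leq\sum_{i=1}^n d(0,z_i)<\infty$, and $G(\F(T'))=L^1(\lambda_{T'})$ is exactly the set of functions in $L^1(\lambda_T)$ supported on $B$. Hence for $w\in W$, writing $w=w'+r$ with $w'\in\F(T')$ and $\|r\|\leq\ep$, we get $\int_{T\setminus B}|Gw|\,d\lambda_T=\int_{T\setminus B}|Gr|\,d\lambda_T\leq\|Gr\|_1=\|r\|\leq\ep$, giving $(i)$.

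For $(i)\implies(ii)$, fix $\ep>0$ and let $B\subset T$ with $\lambda_T(B)<\infty$ and $\int_{T\setminus B}|Gw|\,d\lambda_T<\ep$ for all $w\in W$. The key geometric observation is that a measurable set of finite length measure can be approximated from inside by a finite union of segments: there is a measurable $B_0\subset B$ which is a finite union of segments $[a_k,b_k]$ with $\lambda_T(B\setminus B_0)<\ep$, hence $\int_{T\setminus B_0}|Gw|\,d\lambda_T<2\ep$ for all $w\in W$. Now let $T'$ be the smallest subtree containing $0$ and all the endpoints $a_k,b_k$; this is finitely generated, $T'=\bigcup [0,z_i]$ for suitable $z_i$, and $B_0\subset T'$. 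The map $P\colon L^1(\lambda_T)\to L^1(\lambda_{T'})$ given by restriction (multiplication by $\indicator{T'}$) is a norm-one projection, and $G^{-1}PG\colon\F(T)\to\F(T')$ is a norm-one projection onto $\F(T')$ in $\F(T)$. For $w\in W$ we have $\|w-G^{-1}PGw\|=\|Gw-\indicator{T'}Gw\|_1=\int_{T\setminus T'}|Gw|\,d\lambda_T\leq\int_{T\setminus B_0}|Gw|\,d\lambda_T<2\ep$, so $W\subset\F(T')+2\ep B_{\F(T)}$. Rescaling $\ep$ gives $(ii)$.

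The main obstacle is the inner approximation of $B$ by a finite union of segments with controlled error. This should follow from the construction of the length measure in Godard~\cite{Godard}: $\lambda_T$ is defined via pushforwards of Lebesgue measure on segments, so finite-measure sets are approximated in measure by finite unions of (sub)segments (cover $T$ up to small measure by countably many segments on which $\lambda_T$ restricts to Lebesgue measure, then truncate the tail and within each segment use the standard fact that measurable subsets of $[0,L]$ are approximated from inside by finite unions of intervals). A minor technical point is ensuring the resulting finite union of segments, once we close it up under the tree structure by adjoining the arcs to $0$, still has finite $\lambda_T$-measure and is genuinely of the form $\bigcup_{i=1}^n[0,z_i]$; this is automatic since adjoining finitely many arcs $[0,a_k]$, $[0,b_k]$ only adds finite length. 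Everything else is a routine transfer of the $L^1$ statement, using that $G$ is an isometry and that restriction to a subtree corresponds on the $L^1$ side to multiplication by the indicator of that subtree.
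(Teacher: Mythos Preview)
Your argument for $(ii)\Rightarrow(i)$ is correct and matches the paper's. For $(i)\Rightarrow(ii)$, however, there is a genuine gap at the step ``$\lambda_T(B\setminus B_0)<\ep$, hence $\int_{T\setminus B_0}|Gw|\,d\lambda_T<2\ep$ for all $w\in W$''. Splitting the integral as $\int_{T\setminus B}+\int_{B\setminus B_0}$, the first term is controlled by hypothesis, but the second is not: small $\lambda_T$-measure of $B\setminus B_0$ yields a uniform bound on $\int_{B\setminus B_0}|Gw|$ only under equi-integrability on small-measure sets (condition $(a)$ of the Dunford--Pettis criterion), which is \emph{not} assumed in this proposition. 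Separately, the ``standard fact'' you invoke---that measurable subsets of $[0,L]$ are approximated from \emph{inside} by finite unions of intervals---is false; a fat Cantor set is a counterexample.

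The gap is not repairable without an extra hypothesis: take $T=\R$ and $W=\set{m_{n+1/n^2,\,n}:n\geq 1}$, so that $Gw_n=n^2\,\indicator{(n,\,n+1/n^2]}$ and $\norm{w_n}=1$. Then $B=\bigcup_{n\geq 1}[n,n+1/n^2]$ has $\lambda(B)=\sum n^{-2}<\infty$ and every $Gw_n$ is supported in $B$, so $(i)$ holds for every $\ep>0$; yet a finitely generated subtree of $\R$ is a bounded interval containing $0$, and for $n$ beyond its right endpoint one has $\dist(w_n,\F(T'))=1$, so $(ii)$ fails for every $\ep<1$. The paper's own proof carries the same slip at the sentence ``by the definition of $\lambda_T$, we may assume that $B$ is a finite union of disjoint segments''. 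This does not affect the intended application (Corollary~\ref{cor:char_R_tree}), where condition $(a)$ is also in force and supplies precisely the missing uniform integrability; but Proposition~\ref{Charac(b)} as a standalone equivalence needs that additional hypothesis.
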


\begin{proof}
	$(i) \implies (ii)$: Fix $\ep > 0$. Let $B$ be a measurable subset as in $(i)$. By the definition of $\lambda_T$,  we may assume that $B$ is a finite union of disjoint segments: $B = \bigcup_{i=1}^{N} S_i$. Let $T'$ be the subtree of $T$ generated by $0$ and the segments $(S_i)_{i=1}^N$. The resulting tree $T'$ can be expressed as $T'= \bigcup_{i=1}^n [0,z_i]$. 
	
	Now, consider $w \in W$. 
    By the assumption, we have: 
	$$\int_{T} |Gw| \,d\lambda_T -\int_{T} \indicator{B}|Gw| \, d\lambda_T = \int_{T\setminus B} |Gw| \,d\lambda_T < \ep.$$
	Thus, if we define $w':= G^{-1}(\indicator{B} Gw)$, then we have that $w' \in \F(T')$ and:
	\begin{align*}
		\|w - w'\| &= \|Gw - Gw'\| = \int_T |Gw - Gw'| \,d\lambda_T \\
		& =  \int_{T } |Gw - \indicator{B}Gw| \,d\lambda_T = \int_{T\setminus B} |Gw| \,d\lambda_T < \ep.
	\end{align*}
	\medskip
	
	$(ii) \implies (i)$: Fix $\ep > 0$. Let $T'$ be a finitely generated subtree as in $(ii)$. Setting $B = T'$, we have a measurable subset of $T$ with $\lambda_T(B) < \infty$. Consider $w \in W$. By the assumption, there exists $w' \in \F(B)$ such that $\|w - w'\| \leq \ep$.  The following calculation completes the proof:
	\begin{align*}
		\int_{T\setminus B} |Gw|\,d\lambda_T = \int_{T\setminus B} |Gw-Gw'|\,d\lambda_{T} \leq \int_{T} |Gw-Gw'|\,d\lambda_{T} =\|w -w'\| \leq \ep.
	\end{align*}
\end{proof}

Combining Proposition~\ref{Charac(a)} with Proposition~\ref{Charac(b)} yields the next characterization (Theorem \ref{thmC}).

\begin{corollary}\label{cor:char_R_tree} 
    Let $T$ be an $\R$-tree. If $W \subset \F(T)$ then $W$ is relatively weakly compact if and only if it is bounded and satisfies: 
	\begin{enumerate}[$(a)$,leftmargin=*]
		\item $\forall\ep > 0$, $\exists \delta>0$ such that for every convex sum of molecules $\sum_{i=1}^n a_i m_{x_i  y_i} \in \F(T)$, 
        $\forall I \subset \{1, \ldots , n\}$,
        \[\sum_{i\in I}d(x_i,y_i)<\delta \implies \sum_{i\in I}\abs{a_i}\leq\varepsilon+\dist(\gamma,W).\]
		\item For any $\ep >0$,  there exists a finitely generated subtree $T':=\bigcup_{i=1}^n [0,z_i] \subset T$ such that $W \subset \F(T') + \ep B_{\F(T)}$.
	\end{enumerate}
\end{corollary}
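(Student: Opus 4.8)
The plan is to transport the question to $L^1(\lambda_T)$ through Godard's isometry $G:\F(T)\to L^1(\lambda_T)$, apply the Dunford--Pettis theorem there, and then recognize the two resulting equi-integrability conditions as conditions $(a)$ and $(b)$ of the statement via Propositions~\ref{Charac(a)} and~\ref{Charac(b)}.

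First I would note that $G$ is a surjective linear isometry, hence a homeomorphism for the weak topologies; consequently $W\subset\F(T)$ is relatively weakly compact if and only if $G(W)$ is relatively weakly compact in $L^1(\lambda_T)$, and $W$ is bounded if and only if $G(W)$ is. By the Dunford--Pettis theorem recalled above, $G(W)$ is relatively weakly compact exactly when it is equi-integrable, i.e.\ bounded and satisfying conditions $(a)$ and $(b)$ of the definition of equi-integrability, applied to the measure space $(T,\text{Borel},\lambda_T)$. Here one must keep in mind that $\lambda_T$ is in general an infinite (indeed non-$\sigma$-finite) measure, so both equi-integrability conditions are genuinely needed and one invokes the general form of Dunford--Pettis rather than the finite-measure version in which $(a)$ would imply $(b)$.

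It then remains to identify each equi-integrability condition on $G(W)$ with the corresponding condition in the statement. Equi-integrability condition $(a)$ for $G(W)$ is literally condition $(i)$ of Proposition~\ref{Charac(a)}, which that proposition shows is equivalent to its condition $(ii)$; and condition $(ii)$ of Proposition~\ref{Charac(a)} is, upon writing $\gamma=\sum_{i=1}^n a_i m_{x_iy_i}$, precisely condition $(a)$ of the present statement. Likewise, equi-integrability condition $(b)$ for $G(W)$ is condition $(i)$ of Proposition~\ref{Charac(b)}, which is equivalent to its condition $(ii)$, namely condition $(b)$ of the present statement. Chaining these equivalences with the reduction of the first paragraph yields the corollary.

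The real work has already been done in Propositions~\ref{Charac(a)} and~\ref{Charac(b)}, so no genuine obstacle is expected here; the only delicate point worth spelling out is the one flagged above, namely that $\lambda_T$ need not be finite, which is why the statement carries the separate ``almost finitely generated'' condition $(b)$ in addition to the weight-distribution condition $(a)$.
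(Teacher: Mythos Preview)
Your proposal is correct and follows exactly the approach the paper intends: the paper's own proof is the single line ``Combining Proposition~\ref{Charac(a)} with Proposition~\ref{Charac(b)} yields the next characterization,'' and you have accurately spelled out how this combination works via Godard's isometry and the Dunford--Pettis theorem. Your remark that $\lambda_T$ may fail to be $\sigma$-finite, so that both equi-integrability conditions are genuinely required, is a useful clarification consistent with the paper's setup.
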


\begin{remark}
    The condition (\textit{a}) can be replaced with any of the equivalent conditions from Proposition \ref{Char(a)_equivalent}.
\end{remark}

\begin{remark}
    While condition (\textit{a}) in Corollary~\ref{cor:char_R_tree} is meaningful in any metric space, condition (\textit{b}) is intrinsically tied to the tree structure of the underlying space. From the perspective of potential generalizations, it would be valuable to identify an alternative condition that does not explicitly rely on the tree structure of $T$. In particular, it remains unclear whether condition (\textit{b}) can be substituted by uniform regularity. 
\end{remark}

\begin{question} \label{Question:TreebUR}
	Can condition $(b)$ in Corollary~\ref{cor:char_R_tree} be replaced with uniform regularity?
\end{question}

The next example shows that (\textit{b}) cannot always be replaced by tightness.

\begin{example}
        Let $M=\{0\}\cup \bigcup_{n=1}^\infty \{x^n_1,\ldots,x^n_n\}$ with a tree metric $d$ such that $0$ is unique branching point and each non-zero point is sitting on its own branch with $d(x^n_i,0)=\frac1n$.
		Let $\mu_n=\frac1n(m_{x^n_10}+\ldots+m_{x^n_n0})=\delta(x^n_1)+\ldots+\delta(x^n_n)$. Let $T$ be the smallest $\R$-tree which contains $M$.  
		Then $W:=\set{\mu_n : n \in M}$ is isometrically equivalent to the $\ell_1$-basis, and therefore $W$ is not weakly compact. Notice also that $W$ is tight since it is bounded in $\F(M)$ and $M$ is compact. Next, we will show that condition $(vi)$ from Proposition \ref{Char(a)_equivalent}, and thus condition $(a)$ from Corollary~\ref{cor:char_R_tree}, is satisfied. To that end, fix $\varepsilon>0$ and choose $\delta=\varepsilon$. Fix $\mu_n\in W$ and a convex sum of molecules $\gamma:=\sum_{i=1}^ka_im_{y_iz_i}\in \SSM{\delta}{\supp(\mu_n)\cup\set{0}}$. Without loss of generality we may assume that for every $i\in \set{1,\ldots,k}$ either $y_i=0$ or $z_i=0$, and that each $x_i^n$ appears once at most. Let $I:=\set{i\in\set{1,\ldots,n}: x_i^n\in\set{y_1,z_1,\ldots,y_k,z_k}}$. Then 
        \begin{equation*}
            \frac{\abs{I}}{n}=\sum_Id(y_i,z_i)\le \sum_{i=1}^kd(y_i,z_i)<\delta
        \end{equation*}
        and 
        \begin{equation*}
            \norm{\mu_n-\sum_{i=1}^ka_im_{y_iz_i}}\ge \sum_{i\notin I} \frac{1}{n}=1-\frac{\abs{I}}{n}>1-\delta=1-\varepsilon.
        \end{equation*}
        
        This shows that tightness and condition~$(a)$ are not sufficient for weak compactness. 
        However, in this specific example, condition~$(b)$ in Corollary~\ref{cor:char_R_tree} can be replaced by uniform regularity. 
        Indeed, let $W \subset \F(T)$ and $\varepsilon > 0$. 
        Applying uniform regularity to the open set $U = M \setminus \{0\}$, we obtain a compact set $K \subset U$ such that 
        $$
        W \subset \F(K \cup \{0\}) + \varepsilon B_{\F(M)}.
        $$
        Since $K \subset T$ is compact, there exists a finitely generated subtree $T'$ such that $K \cup \{0\} \subset T'$. 
        Hence, uniform regularity implies condition~$(b)$. 
        Finally, if $W$ satisfies conditions~$(a)$ and~$(b)$, then it is relatively weakly compact, and therefore uniformly regular by Corollary~\ref{thmVstarUR}.
        The same argument goes through for trees with finitely many branching points $\mathrm{Br}(T)$. 
        In that case take $U=M\setminus\mathrm{Br}(T)$ and proceed as above.
\end{example}

\section{Sequences of sums of ``small molecules'' contain an \texorpdfstring{$\ell_1$}{l1}-basis}
\label{section:smallmolecules}

The previous section highlights the significance of approximation by convex series of molecules. Informally, within a weakly compact set, the weight of molecules formed by pairs of points in close proximity should be controlled in a uniform way. In this section, we establish that this condition is, in a sense, necessary. Specifically, consider a sequence of (finite or countable) linear combinations of molecules, denoted by $\gamma_n=\sum_k \lambda_k^n m_{x_k^n y_k^n}$, satisfying $\sum_k d(x_k^n,y_k^n)\to 0$ as $n \to \infty$ and $\sup_n\sum_k |\lambda_k^n|<\infty$. We will demonstrate that this sequence contains an $\ell_1$-basis, thereby proving that such a sequence cannot be contained in a weakly precompact set.

We shall start by introducing a definition of $\delta$-cyclical monotonicity between subsets of $\widetilde{M}:=\set{(x,y): x,y\in M, x\neq y}$. For a subset of $\widetilde{M}$, the concept of cyclical monotonicity can be found e.g.
in \cite{RRZ}: 
the set $A\subset \widetilde{M}$ is
\textit{cyclically monotonic} if,
    for every finite sequence  of 
    pairs $(x_1,y_1),\ldots,(x_n,y_n) \in A$, we have
    \[
    \sum_{i=1}^n d(x_i,y_{i+1})\geq \sum_{i=1}^n d(x_i,y_i),
    \]
    where $y_{n+1}=y_1$.
This concept was generalized in \cite{HKO} by introducing a parameter $\delta\in(0,1]$: a subset $A\subset \widetilde{M}$ is \emph{$\delta$-cyclically monotonic}, if for any finite sequence of pairs $(x_1,y_1)$, $\ldots$ , $(x_n,y_n)\in A$, we have
\begin{equation*}
    \sum_{i=1}^n\min\set{d(x_i,y_{i+1})-\delta d(x_i,y_i),d(y_i,y_{i+1})}\ge 0,
\end{equation*}
where $y_{n+1}:=y_1$. Note that $A$ is cyclically monotonic if and only if it is 1-cyclically monotonic. 
The motivation for both \cite{RRZ} and~\cite{HKO} to consider these definitions 
was to find a lower estimate for the norm of the sum of molecules. In particular, by \cite[Proposition 2.2]{HKO} (or by \cite[Theorem~2.4]{RRZ} for the case $\delta=1$), a set $A\subset\widetilde{M}$ is $\delta$-cyclically monotonic if and only if there exists $f\in S_{\Lip_0(M)}$ such that $\<f,m_{xy}\>\ge \delta$ for every $(x,y)\in A$.

\begin{definition}
    Let $A,B\subset \widetilde{M}$ and let $\delta\in (0,1]$. We say that the sets $A$ and $B$ are \emph{$\delta$-cross-cyclically monotonic}, if for  any finite sequence of pairwise distinct pairs $(x_1,y_1),\ldots,(x_n,y_n)\in A\cup B$, which contains at least one pair from each of $A$ and $B$, we have
    \begin{equation}\label{ineq_cyclical_monotonicity_between_sets}
        \sum_{i=1}^n\min\set{d(x_i,y_{i+1})-\delta d(x_i,y_i),d(y_i,y_{i+1})}\ge 0,
    \end{equation}
    where $y_{n+1}:=y_1$. When the sets $A$ and $B$ are 1-cross-cyclically monotonic, the inequality \eqref{ineq_cyclical_monotonicity_between_sets} reduces to
    \begin{equation*}
        \sum_{i=1}^n\left(d(x_i,y_{i+1})- d(x_i,y_i)\right)\ge 0,
    \end{equation*}
    and in this case, we simply say that the sets $A$ and $B$ are \emph{cross-cyclically monotonic}.
\end{definition}

\begin{remark}
    Notice that when $A$ and $B$ are $\delta$-cross-cyclically monotonic  (with $\delta>0$), then $(x,y)\in A$ implies $(y,x)\notin B$. Nevertheless this does not prevent both $(x,y)$ and $(y,x)$ from belonging to $A$. Indeed, let $M=\set{x,y,a,b}$ such that $d(x,y)=d(a,b)=1$ and every other non-zero distance is $2$. Then $A=\set{(x,y),(y,x)}$ and $B=\set{(a,b),(b,a)}$ are cross-cyclically monotonic, since for any sequence of pairwise distinct elements $(x_1,y_1),\ldots,(x_n,y_n)\in A\cup B$, $n=2,3,4$, that contains elements from both sets, we have
    \begin{equation*}
        \sum_{i=1}^nd(x_i,y_{i+1})\ge4\ge \sum_{i=1}^nd(x_i,y_i),
    \end{equation*}
    where $y_{i+1}=y_1$. Note that $A$ and $B$ are clearly not $\delta$-cyclically monotonic for any $\delta>0$. In particular, cross-cyclical monotonicity places no cyclical-monotonicity constraint on the individual components. 
\end{remark}

\begin{remark}\label{r:TwoToOne}
   We need the assumption that the pairs $(x_1,y_1),\ldots,(x_n,y_n)$ are pairwise distinct to avoid the situation when we select the same pair twice from the same set. However, it is straightforward to check that, when the sets $A$ and $B$ are cross-cyclically monotonic, then the inequality \eqref{ineq_cyclical_monotonicity_between_sets} holds also for all such sequences $(x_1,y_1),\ldots,(x_n,y_n)\in A\cup B$, where 
   whenever two pairs $(x_k,y_k)$ and $(x_{k'},y_{k'})$ are equal, then they belong to different sets $A$ and $B$.
\end{remark}

We now extend \cite[Theorem 2.4]{RRZ} and \cite[Proposition 2.2]{HKO} to the setting of mutually $\delta$-cyclically monotonic sets.

\begin{proposition}\label{prop:Cyclical_monotonicity_between_sets}
   Let $\delta \in (0,1]$ and let $A_1,A_2\subset \widetilde{M}$ be $\delta$-cross-cyclically monotonic.
    Let $\lambda^i \in \ell_1^+(A_i)$, and denote $\gamma_i=\sum_{(x,y)\in A_i} \lambda^i_{(x,y)}m_{xy}$, for $i=1,2$.
    Then there exists $h\in S_{\Lip_0(M)}$ such that $\duality{h,\gamma_i}\geq \delta \norm{\gamma_i}$ for $i=1,2$.
    In particular $\norm{\gamma_1+\gamma_2}\geq \delta(\norm{\gamma_1}+\norm{\gamma_2})$.
\end{proposition}

In the proof, we are going to use the following lemma, where the equivalence (i)$\iff$(ii) is covered by \cite[Lemma~2.2]{RRZ} (for infinite sequences) and \cite[Lemma~2.3]{RRZ} (for finite sequences), and the equivalence (ii)$\iff$(iii) is straightforward to check, and thus left to the reader.
\begin{lemma}\label{l:alpha-beta}
    Let $I$ be a set and $\beta_{ij}$, $i,j\in I$, be real numbers such that $\beta_{ii}=0$ for every $i\in I$. 
    Then the following statements are equivalent:
    \begin{itemize}
        \item[(i)] There exist numbers $\alpha_i$, $i\in I$ such that 
    \[
    \alpha_i\leq \alpha_j+\beta_{ij}
    \]
    for $i,j\in I$.

        \item[(ii)] For every finite sequence $i_1,\ldots,i_m$ in $I$ we have
    \[
    \beta_{i_1i_2}+\beta_{i_2i_3}\ldots+\beta_{i_mi_1}\geq 0,
    \]
        \item[(iii)]  For every \emph{injective} finite sequence $i_1,\ldots,i_m$ in $I$ we have
    \[
    \beta_{i_1i_2}+\beta_{i_2i_3}\ldots+\beta_{i_mi_1}\geq 0,
    \]
    \end{itemize}

\end{lemma}

\begin{proof}[Proof of Proposition~\ref{prop:Cyclical_monotonicity_between_sets}]
    The proof follows similar ideas as \cite[Theorem 2.4]{RRZ} and \cite[Proposition 2.2]{HKO}.
    Let $f,g\in S_{\Lip_0(M)}$ be such that $\<f,\gamma_1\>=\norm{\gamma_1}$ and $\<g,\gamma_2\>=\norm{\gamma_2}$. 
    Write $A_1=\set{(x_i,y_i):i\in I_1}$ and $A_2=\set{(x_i,y_i):i\in I_2}$, 
    where $I_1$ and $I_2$ are disjoint sets of indices. Let $I=I_1\cup I_2$. 
    For $i,j\in I$, let 
    \begin{equation*}
        \beta_{ij}:=\begin{cases}
			\min\set{d(x_i,y_j)-\delta\left(f(x_i)-f(y_i)\right),d(y_i,y_j)}, &\text{ if }i\in I_1 \\
			\min\set{d(x_i,y_j)-\delta\left(g(x_i)-g(y_i)\right),d(y_i,y_j)}, &\text{ if }i\in I_2.
		\end{cases}
    \end{equation*}
    Clearly $\beta_{jj}=0$ for all $j\in I$. Let us consider pairwise distinct indices $i_1,\ldots,i_n\in I$. 
    Our aim is to show that 
    \begin{equation*}
        \sum_{k=1}^n \beta_{i_ki_{k+1}}\ge 0,
    \end{equation*}
    where we denote $i_{n+1}:=i_1$. 
    First consider the case when $i_1,\ldots,i_n\in I_1$. Then
    \begin{align*}
        \sum_{k=1}^n \beta_{i_ki_{k+1}}&=\sum_{k=1}^n \min\set{d(x_{i_k},y_{i_{k+1}})-\delta\left(f(x_{i_k})-f(y_{i_k})\right),d(y_{i_k},y_{i_{k+1}})}\\
        &\ge\delta\sum_{k=1}^n \min\set{d(x_{i_k},y_{i_{k+1}})-f(x_{i_k})+f(y_{i_k}),d(y_{i_k},y_{i_{k+1}})}\\
        &\ge\delta\sum_{k=1}^n \left(-f(y_{i_{k+1}})+f(y_{i_k})\right)=0.
    \end{align*}
    The case when $i_1,\ldots,i_n\in I_2$ is analogous. Now assume that  $i_1,\ldots,i_n$ contain indices from both sets $I_1$ and $I_2$.
    For all $i,j\in I$, we have 
    \begin{equation*}
        \beta_{ij}\ge \min\set{d(x_i,y_j)-\delta d(x_i,y_i),d(y_i,y_j)},
    \end{equation*}
    and thus we obtain $\sum_{k=1}^n \beta_{i_ki_{k+1}}\ge 0$ from the fact that the sets $A_1$ and $A_2$ are $\delta$-cross-cyclically monotonic 
    and from Remark~\ref{r:TwoToOne}.
    Indeed, notice that if for $i,j \in I$ with $i\neq j$ we have $(x_i,y_i)=(x_j,y_j)$ then necessarily $(x_i,y_i)\in A_1$ and $(x_j,y_j)\in A_2$ or vice versa.
    Next, from 
    Lemma~\ref{l:alpha-beta}
    we obtain real numbers $\alpha_i$, $i\in I$ such that 
    \begin{equation*}
        \alpha_i\le \alpha_j+\beta_{ij}
    \end{equation*}
    for all $i,j\in I$. Define $h:M\rightarrow\R$ by
    \begin{equation*}
        h(u)=\inf_{i\in I}(\alpha_i+d(u,y_i)).
    \end{equation*}
   As the infimum of 1-Lipschitz functions, $h$ is 1-Lipschitz. 
   The properties of $\alpha_i$, $i\in I$, are not affected by adding a constant, thus we may additionally assume they were chosen in such a way that $h(0)=0$ and thus $h\in B_{\Lip_0(M)}$. 
   It remains to show that $\<h,\gamma_i\>\ge \delta\norm{\gamma_i}$, for $i=1,2$. 
   Fix $i\in I_1$ and $\varepsilon>0$. 
   There exists $j\in I$ such that $h(x_i)\ge\alpha_j+d(x_i,y_j)-\varepsilon$. Furthermore, $h(y_i)\le \alpha_i$, and thus 
    \begin{equation*}
        h(x_i)-h(y_i)\ge \alpha_j+d(x_i,y_j)-\varepsilon-\alpha_i\ge d(x_i,y_j)-\beta_{ij}-\varepsilon\ge \delta\left(f(x_i)-f(y_i)\right)-\varepsilon.
    \end{equation*}
    As $\varepsilon>0$ was arbitrary, we obtain $ h(x_i)-h(y_i)\ge \delta\left(f(x_i)-f(y_i)\right)$ for every $i\in I_1$, and thus $\<h,\gamma_1\>\ge \delta\<f,\gamma_1\>=\delta\norm{\gamma_1}$. 
    By the same argument, we obtain $ h(x_i)-h(y_i)\ge \delta\left(g(x_i)-g(y_i)\right)$ for every $i\in I_2$ and thus $\<h,\gamma_2\>\ge\delta \<g,\gamma_2\>=\delta\norm{\gamma_2}$. 
    This concludes the proof.
\end{proof}

\begin{lemma}\label{lem:conv_comb}
    Let $M$ be a pointed metric space.
	Let $\mu:=\sum_k\lambda_k m_{x_ky_k}\in \F(M)$ where $(\lambda_k)\in \ell_1$   and let $\gamma\in \F(M)$. Let  $\delta:=\sum_k d(x_k, y_k)$ and
     \begin{equation*}
	  	C:=\inf\{d(x,y): \, x\in \supp(\gamma)\cup\set{0}, \, y\in \set{x_1,y_1, x_2,y_2,\ldots}  \},
	\end{equation*}
	If $C>\delta$, then $\norm{\gamma+ \mu}\ge \frac{C}{C+\delta}(\|\gamma\| + \|\mu\|)$.
\end{lemma}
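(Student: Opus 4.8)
The statement to prove is Lemma~\ref{lem:conv_comb}: if $\mu = \sum_k \lambda_k m_{x_k y_k}$ is a convex series of molecules, $\gamma \in \F(M)$ is arbitrary, $\delta = \sum_k d(x_k, y_k)$, and $C = \inf\{d(x,y) : x \in \supp(\gamma) \cup \{0\}, y \in \{x_1, y_1, x_2, y_2, \ldots\}\}$ satisfies $C > \delta$, then $\|\gamma + \mu\| \geq \frac{C}{C+\delta}(\|\gamma\| + \|\mu\|)$.

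The natural strategy is to produce a single Lipschitz function $f \in S_{\Lip_0(M)}$ (or at least with $\|f\|_L$ controlled) that simultaneously nearly-norms $\gamma$ and equals $1$ on every molecule $m_{x_k y_k}$, so that $\langle f, \gamma + \mu \rangle$ is large. Let me think about what's available. Since $\mu$ is a convex series of molecules, by the cited optimal-transport fact (the three equivalent conditions right before the lemma), there exists $g \in S_{\Lip_0(M)}$ with $\langle g, m_{x_k y_k}\rangle = 1$ for all $k$; equivalently the pairs $(x_k, y_k)$ are cyclically monotone. Separately, given $\varepsilon > 0$, pick $h \in S_{\Lip_0(M)}$ with $\langle h, \gamma \rangle > \|\gamma\| - \varepsilon$. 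The issue is to glue $g$ and $h$ together into one function.

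**The gluing.**

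Here is where the hypothesis $C > \delta$ enters. Consider the set $A := \supp(\gamma) \cup \{0\}$ and the countable set $E := \{x_k, y_k : k\}$. The key observation is that $d(A, E) \geq C$. I would try to build $f$ as follows: on $A$, set $f = h$; on $E$ — more precisely on the union of the segments' worth of points involved in $\mu$ — use $g$ shifted by a constant so that the two pieces are compatible. The point is that $g$ restricted to $E$ has oscillation at most $\delta$ in the relevant sense: for a cyclically monotone family, one can show $\sum_k \lambda_k \langle g, m_{x_k y_k}\rangle = \|\mu\|$, but more usefully, because $\|g\|_L \le 1$ and $g(x_k) - g(y_k) = d(x_k,y_k)$, the total variation of $g$ along the "transport paths" is $\sum_k d(x_k,y_k) = \delta$ (after normalizing so $\mu$ is given with $\sum \lambda_k = \|\mu\|$; rescale $\lambda_k$ if necessary — note the lemma's conclusion is homogeneous only if we also rescale, so one should first reduce to $\|\mu\| = \sum_k \lambda_k$, which is exactly the convex-series hypothesis, and $\delta = \sum_k d(x_k,y_k)$ is then the natural "length").

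Concretely: define $f$ on $A$ to be $h$, and define $f$ on $E$ to be $g + t$ where $t$ is chosen appropriately; then extend by McShane–Whitney. For the extension to have Lipschitz constant $\le 1 + \delta/C$ (this is the constant we want, since $\frac{C}{C+\delta} = \frac{1}{1+\delta/C}$), I need: (1) $h$ is $1$-Lipschitz on $A$, automatic; (2) $g+t$ is $1$-Lipschitz on $E$, automatic; (3) for $x \in A$, $y \in E$, $|f(x) - f(y)| = |h(x) - g(y) - t| \le (1+\delta/C)\, d(x,y)$. Since $d(x,y) \ge C$, it suffices that $|h(x) - g(y) - t| \le (1 + \delta/C) C = C + \delta$ for all such $x, y$. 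Now $h$ vanishes at $0 \in A$ and is $1$-Lipschitz, and $g$ vanishes at $0$... but $0$ may or may not be "close" to $E$. The right move: WLOG translate $g$ so that it vanishes at some reference point of $E$, or better, bound the spread. Actually the cleanest approach: since we only care about $\langle f, \gamma + \mu\rangle$, and $f = h$ on $\supp\gamma$, we get $\langle f, \gamma\rangle = \langle h,\gamma\rangle > \|\gamma\| - \varepsilon$; and since $f - t = g$ on $E$ (so $f$ agrees with $g$ up to an additive constant on the supports of all molecules, and constants are killed by molecules), $\langle f, m_{x_k y_k}\rangle = \langle g, m_{x_k y_k}\rangle = 1$, hence $\langle f, \mu\rangle = \sum_k \lambda_k = \|\mu\|$. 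Therefore
\[
\|\gamma + \mu\| \ge \frac{\langle f, \gamma+\mu\rangle}{\|f\|_L} \ge \frac{\|\gamma\| - \varepsilon + \|\mu\|}{1 + \delta/C} = \frac{C}{C+\delta}\bigl(\|\gamma\| + \|\mu\| - \varepsilon\bigr),
\]
and letting $\varepsilon \to 0$ finishes the proof.

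**The main obstacle.**

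The genuine difficulty is step (3) of the gluing: controlling $|h(x) - g(y) - t|$ uniformly over $x \in A$, $y \in E$ by a constant of order $C + \delta$, with the right choice of $t$. The terms $h(x)$ and $g(y)$ are each only controlled up to an additive constant and can a priori be large if $A$ or $E$ are far from the basepoint $0$. The resolution should use that BOTH $g$ and $h$ vanish at $0$ and are $1$-Lipschitz, so $|h(x)| \le d(x,0)$ and $|g(y)| \le d(y,0)$; combined with $d(x,y) \ge C$ one needs a more careful estimate — perhaps restricting attention (as one always may, up to approximation) to finitely supported $\gamma$ and finitely many molecules, and choosing $t$ to be something like $g(y_0) - h(x_0)$ for a well-chosen pair, then using the triangle inequality $|h(x) - h(x_0)| \le d(x,x_0) \le d(x,y) + d(y, x_0)$, etc. I would expect that with $0 \in A$ and $g$ vanishing near $E$ (replace $g$ by $g - g(x_1)$, harmless since it only shifts by a constant and molecules ignore constants), one gets $|g(y)| \le d(y, x_1) \le \operatorname{diam}(E)$, but $\operatorname{diam}(E)$ need not be bounded by $\delta$. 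So the honest fix is probably to localize: write $\mu$ as a limit of finite partial sums whose point sets have diameter comparable to $\delta$ (using that $\sum_k d(x_k,y_k) = \delta < \infty$ forces the molecules to cluster, after possibly re-routing via cyclic monotonicity), handle each cluster separately, and glue — or, alternatively, avoid diameter issues entirely by noting the conclusion only involves $\langle f, \gamma + \mu\rangle$ and working with a function $f$ that is a careful max/min combination $f = \min(h + C + \delta, \max(h - C - \delta, \tilde g))$ of $h$ and an extension $\tilde g$ of $g|_E$, truncated so the Lipschitz constant stays $\le 1 + \delta/C$; the truncation at level $\pm(C+\delta)$ around $h$ is exactly calibrated by $d(A,E) \ge C$ and the oscillation bound $\delta$ on the $g$-part. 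Making this truncation argument precise — in particular verifying that the $g$-part of $f$ still evaluates to $1$ on every molecule after truncation — is the crux, and is where I'd spend the most care.
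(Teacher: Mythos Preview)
Your approach---pick $g\in S_{\Lip_0(M)}$ norming the molecules of $\mu$, pick $h\in S_{\Lip_0(M)}$ nearly norming $\gamma$, set $f:=h$ on $A:=\supp(\gamma)\cup\{0\}$ and $f:=g+t$ on $E:=\{x_k,y_k\}$, and control $\|f\|_L$---is natural, but the gap you yourself flag is genuine and is not closed by the fixes you sketch. Your reduction ``it suffices that $|h(x)-g(y)-t|\le C+\delta$'' is correct, yet no single shift $t$ need achieve it for an \emph{arbitrary} admissible $g$. Take $A=\{0,a_1,a_2\}$ with $d(0,a_i)=R$, $d(a_1,a_2)=2R$, and two molecule pairs $E_i$ sitting at distance $C$ from $a_i$, so that $d(E_1,E_2)\approx 2R$; let $h(a_1)=R$, $h(a_2)=-R$. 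A $1$-Lipschitz $g$ norming the two molecules may perfectly well satisfy $g(E_1)\approx g(E_2)$. Then the cross constraint at $a_1$ forces $g(E_1)+t\in[R-C-\delta,R+C+\delta]$ while the one at $a_2$ forces $g(E_2)+t\in[-R-C-\delta,-R+C+\delta]$, which is impossible once $R>C+\delta$. So $g$ cannot be chosen independently of $h$; you must construct $g$ \emph{adapted to $h$}, and neither your truncation $\min(h+C+\delta,\max(h-C-\delta,\tilde g))$ nor the clustering idea explains how to do this while preserving $\langle f,m_{x_ky_k}\rangle=1$ for every $k$.

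The paper avoids the gluing problem altogether by a different device. After reducing to finitely supported $\gamma$ and writing $\gamma$ as a convex sum of molecules $m_{x_{n+1}y_{n+1}},\ldots,m_{x_Ny_N}$ with endpoints in $A$, it defines a new metric $b$ on $M=A\cup E$ by $b=d$ within each of $A$ and $E$ and $b=d+\delta$ between them, so that $d\le b\le(1+\delta/C)\,d$. The key step---absent from your outline---is a direct combinatorial verification that the \emph{combined} family of pairs $(x_k,y_k)_{k=1}^N$ is cyclically monotone with respect to $b$, using that each of the two subfamilies is cyclically monotone for $d$ and that the extra $+\delta$ in $b$ absorbs the total length $\sum_{k\le n} d(x_k,y_k)=\delta$ of the $\mu$-pairs. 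The cited equivalence then produces a single $f\in S_{\Lip_0(M,b)}$ norming every molecule at once; since $f$ is $(1+\delta/C)$-Lipschitz for $d$, one gets $\|\gamma\|+\|\mu\|=\langle f,\gamma+\mu\rangle\le(1+\delta/C)\|\gamma+\mu\|$. In effect the change of metric manufactures the compatible pair $(h,g)$ for you, replacing the delicate Lipschitz-extension estimate by a short cyclic-monotonicity computation.
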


\begin{proof}
    By an obvious approximation argument, we may assume that $\gamma$ and $\mu$ are finitely supported, with $\mu$ written as $\mu:=\sum_{k=1}^n\lambda_k m_{x_ky_k}$. We may additionally assume $\lambda_k>0$ by interchanging $x_k$ and $y_k$ whenever necessary.
    Let $A=\supp(\gamma)\cup\set{0}$ and $B=\set{x_1,y_1,\ldots,x_n,y_n}$. We may further assume that $\gamma$ is written as a convex sum of molecules from points in $A$, say $m_{x_{n+1}y_{n+1}},\ldots,m_{x_Ny_N}$, where $x_{n+1},y_{n+1},\ldots,x_N,y_N\in A$.
    Finally, we assume without loss of generality that $M=A\cup B$.
    Note that $A\cap B=\emptyset$ as $d(A,B)=C>\delta>0$.
    
    Define a new metric $b$ on $M$ by
		$$
		b(x,y) = \begin{cases}
			d(x,y) &\text{ if $x,y\in A$ or $x,y\in B$} \\
			d(x,y)+\delta &\text{ otherwise.}
		\end{cases}
		$$
		To prove that it is indeed a metric, it is enough to check the triangle inequality. Let $x,y,z\in M$ be different. If $x,z$ are in the same set $A$ or $B$ then
		$$
		b(x,z)=d(x,z)\leq d(x,y)+d(y,z)\leq b(x,y)+b(y,z).
		$$
		Otherwise, $y$ must be in the same set as exactly one of them, say $x$, then
		$$
		b(x,z)=d(x,z)+\delta\leq d(x,y)+d(y,z)+\delta = b(x,y)+b(y,z).
		$$
		So $b$ is indeed a metric on $M$ and, moreover, it is equivalent to $d$: if $d(x,y)\neq b(x,y)$ then $d(x,y)\geq C$, thus
		$$
		b(x,y)=d(x,y)+\delta\leq \pare{1+\frac{\delta}{C}}d(x,y) .
		$$
		Therefore $d\leq b\leq (1+\delta/C)d$. In particular $\lipfree{M,d}$ and $\lipfree{M,b}$ are $(1+\delta/C)$-isomorphic.
        \smallskip
        
        \noindent \textbf{Claim:} \textit{The sets $\set{(x_1,y_1),\ldots,(x_n,y_n)}$ and $\set{(x_{n+1},y_{n+1}),\ldots,(x_N,y_N)}$} are cross-cyclically monotonic in the metric $b$.
        \smallskip
        
        Let us assume that the claim is proved. 
        By Proposition \ref{prop:Cyclical_monotonicity_between_sets} we get 
        $$\norm{\mu+\gamma}_{\F(M,b)}=\norm{\mu}_{\F(M,b)}+\norm{\gamma}_{\F(M,b)}.$$
        Furthermore, by definition of $b$, we have $\norm{\mu}_{\F(M,b)}=\norm{\mu}_{\F(M,d)}$ and $\norm{\gamma}_{\F(M,b)}=\norm{\gamma}_{\F(M,d)}$, and thus 
        $$\|\mu\|_{\F(M,d)} + \|\gamma\|_{\F(M,d)}\le\norm{\mu+\gamma}_{\F(M,b)}\le \pare{1+\frac{\delta}{C}}\norm{\mu+\gamma}_{\F(M,d)}.$$
        
        To conclude the proof, it only remains to prove the claim. Note that, since $\gamma$ is a convex sum of the molecules $m_{x_{n+1}y_{n+1}} , \dots , m_{x_Ny_N}$, by \cite[Theorem 2.4]{RRZ} the sequence $(x_k ,y_k)_{k=n+1}^N$ is cyclically monotonic with respect to $d$, hence also with respect to $b$. Now let us consider $i_1, \ldots, i_m \in \{1, \ldots ,  N\}$, with $m\geq 2$ and indices from both sets $\{1,\ldots ,n\}$ and $\{n+1, \ldots , N\}$. Note that we may assume that the indices $i_1,\ldots,i_{m}$ are pairwise distinct.  Let $j_1<\ldots<j_{l}$ be such that the indices $i_{j_k}$, $1 \leq k \leq l$, are those which belong to the set $\set{n+1,\ldots,N}$, and let us write $I = \{j_1, \ldots , j_l\}$. To simplify the notation in the next estimates, we let $j_{l+1} := j_1$, $i_{m+1} := i_1$.
        Using the cyclic monotonicity of $(x_k ,y_k)_{k=n+1}^N$ and the definition of $\delta$, we first obtain:
        \begin{align*}
            \sum_{k=1}^{m} b(x_{i_k} , y_{i_k}) &= \sum_{k=1}^{l} d(x_{i_{j_k}} , y_{i_{j_k}}) + \sum_{\substack{k=1 \\ k \not\in I}}^{m} d(x_{i_k} , y_{i_k}) \\
            &\leq  \sum_{k=1}^{l} d(x_{i_{j_k}} , y_{i_{j_{k+1}}}) + \delta
        \end{align*}
        Next, using the triangle inequality, we get:
        \begin{align*}
            \sum_{k=1}^{m} b(x_{i_k} , y_{i_k}) &\leq \sum_{k=1}^{l} \left( \sum_{k'=j_k}^{j_{k+1}-1}d(x_{i_{k'}} , y_{i_{k'+1}}) + \sum_{k'=j_k+1}^{j_{k+1}-1}d(x_{i_{k'}} , y_{i_{k'}})  \right) + \delta \\
            &\leq \sum_{k=1}^m d(x_{i_k} , y_{i_{k+1}}) + \sum_{k=1}^n d(x_k , y_k) + \delta \\
            &\leq \sum_{k=1}^m d(x_{i_k} , y_{i_{k+1}}) + 2 \delta 
            \leq \sum_{k=1}^{m} b(x_{i_k} , y_{i_{k+1}}).
        \end{align*}
    This ends the proof of the claim, and the lemma.
\end{proof}	

\begin{lemma}\label{lem:conv_comb_1}
    Let $M$ be a pointed metric space.
	Let $\mu:=\sum_k\lambda_k m_{x_ky_k}\in \F(M)$ where $(\lambda_k)\in \ell_1$ and let $\gamma\in \F(M)$. Let $r>0$ be such that for every $k$ there exists $u\in\supp(\gamma)\cup\set{0}$ such that
    \begin{equation*}
        x_k,y_k\in B(u,r).
    \end{equation*}
    Then $\norm{\gamma+\mu} \ge \frac{D-2r}{D+2r}(\|\gamma\| + \|\mu\|)$, where
    \begin{equation*}
        D=\inf \set{d(x,y):x\neq y \in \supp(\gamma)\cup\set{0}}.
    \end{equation*}
\end{lemma}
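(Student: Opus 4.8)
The plan is to find, after a routine reduction, a single function $f\in\Lip_0(M)$ with $\Lip(f)\le\frac{D+2r}{D-2r}$ that simultaneously norms \emph{every} molecule occurring in suitable convex-sum representations of $\gamma$ and of $\mu$; then
$\|\gamma\|+\|\mu\|=\duality{f,\gamma+\mu}\le\Lip(f)\,\|\gamma+\mu\|$
gives exactly the asserted inequality. This mirrors the strategy of Lemma~\ref{lem:conv_comb}, but here, because the endpoints of the molecules of $\mu$ \emph{cluster around} the points of $A:=\supp(\gamma)\cup\{0\}$ rather than being far from them, it is cleaner to build $f$ directly, without passing through an auxiliary metric.

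The reductions come first. The cases $\gamma=0$ and $D\le 2r$ are trivial (in the latter the right-hand side is nonpositive), so assume $\gamma\neq0$ and $D>2r$ (hence also $D<\infty$). Since $\gamma\in\F(A)$ and finitely supported elements are dense in $\F(A)$, approximate $\gamma$ in norm by finitely supported $\gamma'\in\F(A)$; by continuity of the norm it suffices to prove $\|\gamma'+\mu\|\ge\frac{D-2r}{D+2r}(\|\gamma'\|+\|\mu\|)$ keeping the \emph{same} $A$ and $D$, and then let $\gamma'\to\gamma$. Being finitely supported, $\gamma'$ is a convex sum of molecules with endpoints in $\supp(\gamma')\cup\{0\}\subseteq A$, say $\gamma'=\sum_{k>n}\lambda_k m_{x_ky_k}$; write also $\mu=\sum_{k\le n}\lambda_k m_{x_ky_k}$ (one may take all $\lambda_k\ge0$), and set $M_0:=A\cup\{x_k,y_k:1\le k\le n\}$.

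Now the construction. For each $k\le n$ fix $u_k\in A$ with $x_k,y_k\in B(u_k,r)$. Since $A$ is $D$-separated and $2r<D$, any two endpoints of $\mu$-molecules that coincide must share the same center $u_k$, and any endpoint lying in $A$ equals its own center; hence there is a well-defined map $\pi:M_0\to A$ with $\pi|_A=\mathrm{id}$, $\pi(x_k)=\pi(y_k)=u_k$ for $k\le n$, and $d(z,\pi(z))\le r$ for every $z\in M_0$. By \cite[Theorem~2.4]{RRZ} applied in $\F(A)$ there is $g\in S_{\Lip_0(A)}$ norming every $m_{x_ky_k}$ with $k>n$, and applied in $\F(M)$ there is $f_\mu\in S_{\Lip_0(M)}$ norming every $m_{x_ky_k}$ with $k\le n$. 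Define $f$ on $M_0$ by $f(z)=g(\pi(z))+\bigl(f_\mu(z)-f_\mu(\pi(z))\bigr)$. Then $f|_A=g$ (so $f(0)=0$ and $f$ norms the molecules of $\gamma'$), while for $k\le n$ the $g$-terms cancel and $f(x_k)-f(y_k)=f_\mu(x_k)-f_\mu(y_k)=d(x_k,y_k)$, so $f$ norms those too. For the Lipschitz constant, split: if $\pi(z)=\pi(w)$ then $|f(z)-f(w)|=|f_\mu(z)-f_\mu(w)|\le d(z,w)$; if $\pi(z)\neq\pi(w)$ then $|f(z)-f(w)|\le d(\pi(z),\pi(w))+2r$ (using $|f_\mu(z)-f_\mu(\pi(z))|\le r$), while $d(z,w)\ge d(\pi(z),\pi(w))-2r\ge D-2r>0$, so the ratio is at most $\frac{d(\pi(z),\pi(w))+2r}{d(\pi(z),\pi(w))-2r}\le\frac{D+2r}{D-2r}$ because $t\mapsto\frac{t+2r}{t-2r}$ is decreasing on $(2r,\infty)$ and $d(\pi(z),\pi(w))\ge D$. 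Extend $f$ to $M$ by the McShane--Whitney theorem without increasing its Lipschitz constant; since $f$ norms all molecules in the chosen representations, $\duality{f,\gamma'+\mu}=\|\gamma'\|+\|\mu\|$, and the displayed duality estimate finishes the proof.

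The step requiring the most care is the set-up of the clustering map $\pi$ — ensuring the two endpoints of each $\mu$-molecule really are assigned a common center, handling endpoints that already lie in $A$, and exploiting $2r<D$ — together with the cross-cluster Lipschitz estimate for the explicit $f$; the approximation of $\gamma$ within $\F(A)$ (so that the hypothesis on the centers $u_k$ is preserved verbatim) is a minor but genuine subtlety. The two invocations of \cite[Theorem~2.4]{RRZ} and the final norming computation are routine.
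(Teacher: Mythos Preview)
Your proof is correct and takes essentially the same approach as the paper: both construct a single Lipschitz function that agrees with a norming function $g$ for $\gamma$ on $A=\supp(\gamma)\cup\{0\}$ and, within each ball $B(u,r)$, shifts a norming function for $\mu$ by the constant $g(u)-f_\mu(u)$; your formula $f(z)=g(\pi(z))+f_\mu(z)-f_\mu(\pi(z))$ is exactly the paper's $h\restriction_{B(u,r)}=g(u)-f(u)+f\restriction_{B(u,r)}$, with the retraction $\pi$ making the ``choose $u$ with $z\in B(u,r)$'' step explicit. Your handling of the approximation of $\gamma$ within $\F(A)$ (keeping the same $A$ and $D$) is in fact a bit more careful than the paper's reduction.
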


\begin{proof}
    As in the previous lemma, we may assume that $\gamma$ and $\mu$ are finitely supported, and that $\mu:=\sum_{k=1}^n\lambda_k m_{x_ky_k}$.
     The case when $r\ge\frac{D}{2}$ is trivial, thus we assume $r<\frac{D}{2}$. Without loss of generality, we may assume that $M=\supp(\gamma)\cup\set{0,x_1,y_1,\ldots,x_n,y_n}$. Then the sets $B(u,r)$, $u\in \supp(\gamma)\cup\set{0}$ cover the entire $M$. Let $f,g\in S_{\Lip_0(M)}$ be such that $\<f,\mu\>=\norm{\mu}$ and $\<g,\gamma\>=\norm{\gamma}$. Define $h:M \rightarrow\R$ by
     \begin{equation*}
         h\restricted_{B(u,r)}=g(u)-f(u)+f\restricted_{B(u,r)}
     \end{equation*}
     for every $u\in \supp(\gamma)\cup\set{0}$. Since $r<\frac{D}{2}$, the sets $B(u,r)$, $u\in \supp(\gamma)\cup\set{0}$ are pairwise disjoint and thus $h$ is well defined, and $h(0)=0$. Next we estimate the Lipschitz constant of $h$. Fix $v,v'\in M$ and let $u,u'\in \supp(\gamma)\cup\set{0}$ be such that $v\in B(u,r)$ and $v'\in B(u',r)$. If $u=u'$, then
     \begin{equation*}
         \abs{h(v)-h(v')}=\abs{f(v)-f(v')}\le d(v,v').
     \end{equation*}
     Otherwise $u\neq u'$ and thus by triangle inequality
     \begin{align*}
         d(v,v')\ge d(u,u')-d(u,v)-d(u',v')\ge D-2r,
     \end{align*}
     giving us
     \begin{align*}
         \abs{h(v)-h(v')}&=\abs{g(u)-g(u')+f(v)-f(u)+f(u')-f(v')}\\
         &\le d(u,u')+d(v,u)+d(u',v')\\
         &\le D+2r\le \frac{D+2r}{D-2r}d(v,v').
     \end{align*}
     Therefore $h\in\Lip_0(M)$ with $\norm{h}\le \frac{D+2r}{D-2r}$. On the other hand $h(u)=g(u)$ for $u\in \supp(\gamma)$ and thus $\<h,\gamma\>=\<g,\gamma\>=\norm{\gamma}$. Furthermore, for any $k\in\set{1,\ldots,n}$ there exists $u\in \supp(\gamma)\cup\set{0}$ such that $x_k,y_k\in B(u,r)$ and thus
     \begin{equation*}
         h(x_k)-h(y_k)=f(x_k)-f(y_k)
     \end{equation*}
     giving us $\<h,\mu\>=\<f,\mu\>=\norm{\mu}$. Hence
     \begin{equation*}
         \norm{\gamma+\mu}\ge\frac{1}{\norm{h}}\<h,\gamma+\mu\>\ge \frac{D-2r}{D+2r}(\|\gamma\| + \|\mu\|). 
     \end{equation*}
\end{proof}

Using the two previous lemmas, we obtain the following generalization of \cite[Theorem 2.6]{JRZ}, which covers the case where each $\gamma_n$ is a molecule.

\begin{theorem}\label{pr:jrz_improved}
	Let $(\gamma_n)$ be a sequence in $S_{\lipfree{M}}$ such that each $\gamma_n$ is expressed as follows:
	$$
	\gamma_n=\sum_k \lambda_k^n m_{x_k^n y_k^n}.
	$$
	If
	$
	\lim_{n\to\infty}\sum_k d(x_k^n,y_k^n)=0,
	$
    and $\sup_n\sum_k \abs{\lambda_k^n}<\infty$,
	then $\lim_n\norm{\gamma+\gamma_n}=\norm{\gamma}+ 1$ for all $\gamma\in \lipfree{M}$. 
\end{theorem}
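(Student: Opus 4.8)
The plan is to reduce the statement to the construction, for each large $n$, of a single functional norming $\gamma+\gamma_n$ with Lipschitz constant $1+o(1)$. Since $\|\gamma+\gamma_n\|\leq\|\gamma\|+\|\gamma_n\|=\|\gamma\|+1$ always holds, it suffices to prove $\liminf_n\|\gamma+\gamma_n\|\geq\|\gamma\|+1$; and since $\bigl|\,\|\gamma+\gamma_n\|-\|\gamma'+\gamma_n\|\,\bigr|\leq\|\gamma-\gamma'\|$ and finitely supported elements are dense, I would assume $\gamma$ finitely supported. Write $A:=\supp(\gamma)\cup\{0\}$, a finite set, $D:=\min\{d(u,v):u\neq v\in A\}>0$, and, reorienting molecules, assume $\lambda_k^n>0$. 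Fix $g\in S_{\Lip_0(M)}$ norming $\gamma$ — hence norming each molecule of $\gamma$, as $\gamma$ is a convex sum of molecules — and, via the optimal transport characterization recalled above, fix $f_n\in S_{\Lip_0(M)}$ with $\duality{f_n,m_{x_k^n y_k^n}}=1$ for all $k$, so $\duality{f_n,\gamma_n}=1$. The target is $h_n\in\Lip_0(M)$ norming every molecule of $\gamma$ and of $\gamma_n$ with $\Lip(h_n)\leq 1+o(1)$, which gives $\|\gamma+\gamma_n\|\geq\Lip(h_n)^{-1}(\|\gamma\|+1)\to\|\gamma\|+1$.

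Next I would split $\gamma_n$ according to distance from $A$. Put $\delta_n:=\sum_k d(x_k^n,y_k^n)\to 0$. The molecule-connected components of $\gamma_n$ (grouping molecules sharing an endpoint) have diameter $\leq\delta_n$, so, since $d(\cdot,A)$ is $1$-Lipschitz, the set of radii $r$ at which some component meets both $\{d(\cdot,A)<r\}$ and its complement has measure $\leq\delta_n$; hence one may pick $r_n\in(\delta_n^{2/3},2\delta_n^{2/3})$ so that no component straddles $\{d(\cdot,A)<r_n\}$. For large $n$, $\delta_n<D-2r_n$, so every molecule of $\gamma_n$ lies wholly inside some ball $B(u,r_n)$, $u\in A$ (a \emph{near} molecule) or wholly outside $\bigcup_{u\in A}B(u,r_n)$ (a \emph{far} molecule); near components lie within $r_n$ of a single point of $A$, and far components are diameter-$\leq\delta_n$ clusters at distance $\geq r_n\gg\delta_n$ from $A$. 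These are precisely the settings of Lemma~\ref{lem:conv_comb_1} (near part) and of Lemma~\ref{lem:conv_comb} (far part).

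Then I would set $h_n:=g(u)-f_n(u)+f_n$ on each $B(u,r_n)$ and $h_n:=c_K+f_n$ on each far component $K$, with constants $c_K$ to be determined. By construction $h_n|_A=g|_A$, so $h_n$ norms $\gamma$; and along any molecule of $\gamma_n$, $h_n$ differs from $f_n$ by a constant, so $h_n$ norms it; hence $\duality{h_n,\gamma+\gamma_n}=\|\gamma\|+1$. The Lipschitz estimate is the crux. Pairs inside one ball or one component give ratio $\leq1$; for $v\in B(u,r_n)$, $v'\in B(u',r_n)$, $u\neq u'$, the computation in the proof of Lemma~\ref{lem:conv_comb_1} yields $|h_n(v)-h_n(v')|\leq d(v,v')+4r_n$ with $d(v,v')\geq D-2r_n$, i.e.\ ratio $\leq1+O(r_n/D)$. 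The hard case is $v\in B(u,r_n)$ versus $w$ in a far component $K$ (and $w\in K$, $w'\in K'$): here $h_n(v)-h_n(w)=(g(u)-c_K)+(f_n(v)-f_n(u))-f_n(w)$, and since $\diam(K)\leq\delta_n$ while $d(v,w)$ can be of order $\delta_n$, one must choose the family $(c_K)_K$ \emph{simultaneously} so that every such difference is $\leq(1+O(\delta_n/r_n))\,d(v,w)$. This is a one-dimensional (Helly-type) selection: the admissible intervals for the $c_K$ pairwise intersect because the molecules of $\gamma_n$ are $d$-cyclically monotone and $g$ is $1$-Lipschitz, the slack needed being $O(\delta_n)$, which is absorbed since every far endpoint is at distance $\geq r_n$ from $A$. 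With such a choice, $\Lip(h_n)\leq 1+O(r_n/D)+O(\delta_n/r_n)=1+O(\delta_n^{1/3})\to1$.

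I expect the main obstacle to be exactly this last step — the simultaneous choice of the constants $c_K$, equivalently the Lipschitz bound at the interface between the near balls and the far clusters; the rest is organization around the two lemmas. A cleaner repackaging would be to construct an auxiliary metric $b_n$ on $M$ with $d\leq b_n\leq(1+o(1))d$, equal to $d$ on $A$ and along every molecule of $\gamma_n$, and making the combined molecule family of $\gamma$ and $\gamma_n$ cyclically monotone for $b_n$; the optimal transport characterization then produces the norming functional directly, at the cost of a $(1+o(1))$-distortion.
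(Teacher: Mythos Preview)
Your overall strategy matches the paper's: reduce to finitely supported $\gamma$, split $\gamma_n$ into a ``near'' part (inside small balls around $A$) and a ``far'' part, and combine the estimates coming from Lemmas~\ref{lem:conv_comb_1} and~\ref{lem:conv_comb}. You also correctly isolate the interface between near and far as the crux. However, your resolution of that hard case is where the argument breaks down.

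The problem is your splitting does not create any spatial separation between near and far points. Choosing $r_n$ so that no molecule-connected component straddles $\{d(\cdot,A)<r_n\}$ guarantees that every \emph{molecule} is wholly near or wholly far, but it says nothing about the distance between a near endpoint $v$ and a far endpoint $w$ lying in \emph{different} components: such $d(v,w)$ can be arbitrarily small, not merely ``of order $\delta_n$''. Concretely, an adversary can place many tiny components at radii densely filling $(\delta_n^{2/3},2\delta_n^{2/3})$; whatever $r_n$ you pick, the last near component and the first far one may sit at distance $\ll\delta_n$. In that regime the constraint $|h_n(v)-h_n(w)|\leq(1+\eta)d(v,w)$ forces $c_K=g(u)-f_n(u)$ \emph{exactly} for the relevant $u$, and once the $c_K$ are pinned this way you still have to check all far--far and far--other-ball constraints; the ``Helly-type'' assertion that these are simultaneously satisfiable with $\eta=O(\delta_n/r_n)$ is not justified, and I do not see how to make it go through without a gap. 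Your alternative repackaging via an auxiliary metric $b_n$ runs into the same obstacle: the construction in Lemma~\ref{lem:conv_comb} needs $C>\delta$, i.e.\ a genuine separation between $\supp(\gamma+\mu)$ and the far endpoints.

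The paper supplies the missing idea: rather than a single threshold, consider $m\approx 1/\varepsilon$ nested annuli $B_i=A_i\setminus A_{i-1}$ around $A$ with geometrically increasing radii $\delta(1+1/\varepsilon)^i$. By pigeonhole one annulus carries $\gamma_n$-weight $<\varepsilon$; discarding those molecules leaves a near part inside $A_{i-1}$ and a far part outside $A_i$, now separated by a gap $C=\delta/\varepsilon-2\delta>\delta$. One then applies Lemma~\ref{lem:conv_comb_1} to $\gamma$ plus the near part, and Lemma~\ref{lem:conv_comb} to that sum plus the far part---sequentially, as norm inequalities, rather than building a single functional. The $\varepsilon$ loss from the discarded annulus is harmless since $\varepsilon$ is arbitrary.
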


\begin{proof}
    By an easy approximation argument, we may assume without loss of generality that each $\gamma_n$ is finitely supported, and we write:
    $$ \gamma_n=\sum_{k=1}^{N_n} \lambda_k^n m_{x_k^n y_k^n}. $$
    
    Further, we may assume $\lambda_k^n>0$ by interchanging $x_k^n$ and $y_k^n$ whenever necessary.
    We may also assume that $\gamma$ is finitely supported. Fix $\varepsilon\in(0,1/3)$.
	Let 
	\begin{equation*}
		D:=\min \set{d(x,y):x\neq y \in \supp(\gamma)\cup\set{0}}.
	\end{equation*}
	Let $m\in\N$ be such that $\frac{1}{m}\sup_n\sum_k \abs{\lambda_k^n}<\varepsilon$ and let $\delta>0$ be such that $\delta\left(1+\frac{1}{\varepsilon}\right)^m +\delta< \varepsilon D$. Next we pick $n \in \N$ large enough so that 
    $$\displaystyle\sum_{k=1}^{N_n}d(x_k^n,y_k^n)<\delta.$$ Let $A_0=\emptyset$,
    $$A_i=\bigcup_{u\in\supp(\gamma)\cup\set{0}}B\left(u,\delta\left(1+\frac{1}{\varepsilon}\right)^i\right)$$
    for $i=1,\ldots,m$,  and $B_i=A_i\setminus A_{i-1}$ for $i=1,\ldots,m$. Note that the sets $B_i$ are pairwise disjoint. Hence each $x_k^n$ can belong to at most one of these sets. Thus, thanks to our choice of $m$, there must exist $i\in \set{1,\ldots,m}$ such that $\sum_{k \, :\,x_k^n\in B_{i}} \lambda_k^n <\varepsilon$. Let
	\begin{align*}
		I &= \set{k\in\set{1,\ldots,N_n}:x_k^n\notin A_i } \\
		J &= \set{k\in\set{1,\ldots,N_n}:x_k^n\in A_{i-1}}. 
	\end{align*}
    Then 
    $$\norm{\sum_{k\in I\cup J}\lambda_k^{n}m_{x_k^ny_k^n}}\ge1-\sum_{k\notin I\cup J}\lambda_k^{n} = 1-\sum_{k \, :\,x_k^n\in B_{i}} \lambda_k^n > 1-\varepsilon.$$
    Let $\mu=\sum_{k\in J}\lambda_k^n m_{x_k^n y_k^n}$. For every $k\in J$ there exists $u\in\supp(\gamma\cup\set{0})$ such that
    $$x_k^n\in  B\left(u,\delta\left(1+\frac{1}{\varepsilon}\right)^{i-1}\right)\subset B\left(u,\varepsilon D-\delta\right)$$
    and since $d(x_k^{n},y_k^{n})<\delta$, we get
    \begin{equation*}
        x_k^{n},y_k^{n}\in B(u,\varepsilon D).
    \end{equation*}
    We apply Lemma \ref{lem:conv_comb_1} to elements
    $\mu$ and $\gamma$ and obtain 
    \begin{equation*}
        \norm{\gamma+\mu} \ge \frac{D-2\varepsilon D}{D+2\varepsilon D}\left(\|\gamma\| + \norm{\mu}\right)=\frac{1-2\varepsilon}{1+2\varepsilon }\left(\|\gamma\| +  \norm{\mu}\right).
    \end{equation*}
	Now consider $C:=\frac{\delta}{\varepsilon}-2\delta>\delta$. Since for every $x\in \supp(\gamma+\mu)\cup\set{0}$ there exists $u\in \supp(\gamma)\cup\set{0}$ such that 
    $x\in B\left(u,\delta\left(1+\frac{1}{\varepsilon}\right)^{i-1}+\delta\right)$ and for every $y\in \set{x_k^n,y_k^n: k\in I}$ we have $y\notin B\left(u,\delta\left(1+\frac{1}{\varepsilon}\right)^i-\delta\right)$, we deduce that
    \begin{align*}
        d(x,y)&\ge d(u,y)-d(u,x)\ge \delta\left(1+\frac{1}{\varepsilon}\right)^i-\delta-\left(\delta\left(1+\frac{1}{\varepsilon}\right)^{i-1}+\delta\right)\\
        &=\delta\left(1+\frac{1}{\varepsilon}\right)^{i-1}\frac{1}{\varepsilon}-2\delta\ge \frac{\delta}{\varepsilon}-2\delta= C.
    \end{align*}
    Thus from Lemma \ref{lem:conv_comb} we get
	\begin{align*}
		\norm{\gamma+\mu+\sum_{k\in I} \lambda_k^n m_{x_k^n y_k^n}} &\ge \frac{C}{C+\delta}\Big(\|\gamma+\mu\| + \Big\|\sum_{k\in I} \lambda_k^n m_{x_k^n y_k^n} \Big\| \Big) \\
        &=\frac{1-2\varepsilon}{1-\varepsilon}\Big(\|\gamma+\mu\|+ \Big\|\sum_{k\in I} \lambda_k^n m_{x_k^n y_k^n} \Big\| \Big). 
	\end{align*}
    Therefore
    \begin{align*}
		\norm{\gamma +\gamma_n}&\ge \norm{\gamma+\mu+\sum_{k\in I} \lambda_k^n m_{x_k^n y_k^n}}-\norm{\sum_{k\notin I\cup J} \lambda_k^n m_{x_k^n y_k^n}}\\
        &\ge\frac{1-2\varepsilon}{1-\varepsilon}\Big(\|\gamma+\mu\|+ \Big\|\sum_{k\in I} \lambda_k^n m_{x_k^n y_k^n} \Big\| \Big)-\varepsilon\\
        &\ge \frac{(1-2\varepsilon)^2}{(1-\varepsilon)(1+2\varepsilon)}\Big(\|\gamma\|+ \norm{\mu}+ \Big\|\sum_{k\in I} \lambda_k^n m_{x_k^n y_k^n} \Big\| \Big)-\varepsilon\\
        &\ge \frac{(1-2\varepsilon)^2}{(1-\varepsilon)(1+2\varepsilon)}\Big(\|\gamma\|+1 -\varepsilon\Big)-\varepsilon.
	\end{align*}
    As $\varepsilon$ was arbitrary, we get $\lim_n\norm{\gamma+\gamma_n}=\norm{\gamma}+ 1$.
\end{proof}

This is enough to deduce the already announced main result of this section, Theorem \ref{thmD}.
We recall that a sequence $(e_n)\subset S_X$ of distinct points in the unit sphere of a Banach space $X$ is called \emph{an asymptotically isometric $\ell_1$-basis} if for every $C \in (0,1)$ there exists $k_0 \in \N$ such that for every $(a_k) \in c_{00}$ we have $\norm{\sum_{k=k_0}^\infty a_k e_k}\geq C\sum_{k=k_0}^\infty \abs{a_k}$.
Obviously such a sequence is equivalent to the $\ell_1$-basis.

\begin{corollary}
\label{pr:sum_distances_l1_basis}
    Let $M$ be a complete pointed metric space. For every $n \in \N$, let  $\gamma_n=\sum_k \lambda_k^n m_{x_k^n y_k^n} \in S_{\F(M)}$ such that 
    $$\sum_k d(x_k^n,y_k^n)\underset{n \to \infty}{\longrightarrow} 0\quad\text{and}\quad \sup_n\sum_k |\lambda_k^n|<\infty.$$
	Then there is an infinite $\mathbb M \subset \N$ such that
	$(\gamma_n)_{n\in \mathbb M}$ is an asymptotically isometric $\ell_1$-basis.
\end{corollary}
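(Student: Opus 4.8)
The plan is to obtain the corollary from Theorem~\ref{pr:jrz_improved} by the classical recursive construction of an $\ell_1$-basis. First I would record a two-sided version of the limit. Since $-\gamma_n=\sum_k\lambda_k^n m_{y_k^n x_k^n}$ is again a convex sum/series of molecules with $\sum_k\abs{\lambda_k^n}=\norm{-\gamma_n}=1$ and $\sum_k d(y_k^n,x_k^n)=\sum_k d(x_k^n,y_k^n)\to 0$, Theorem~\ref{pr:jrz_improved} applies to $(-\gamma_n)$ just as to $(\gamma_n)$; combining the two statements with the positive homogeneity of the norm, one gets $\lim_n\norm{\gamma+t\gamma_n}=\norm{\gamma}+\abs{t}$ for every $\gamma\in\F(M)$ and every $t\in\R$.

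Next I would upgrade this pointwise limit to uniformity on compacta. For a finite-dimensional subspace $F\subset\F(M)$, the maps $f_n\colon B_F\times[-1,1]\to\R$, $f_n(\gamma,t)=\norm{\gamma+t\gamma_n}$, are $1$-Lipschitz in each coordinate (as $\norm{\gamma_n}=1$), hence equicontinuous on the compact metric space $B_F\times[-1,1]$, and they converge pointwise to the continuous function $(\gamma,t)\mapsto\norm{\gamma}+\abs{t}$. By the classical fact that an equicontinuous, pointwise-convergent sequence of functions on a compact metric space converges uniformly, the convergence is uniform on $B_F\times[-1,1]$; rescaling by $\max\{\norm{\gamma},\abs{t}\}$ then yields: for every $\delta\in(0,1)$ there is $N\in\N$ such that
\[
\norm{\gamma+t\gamma_n}\ \ge\ (1-\delta)\bigl(\norm{\gamma}+\abs{t}\bigr)\qquad\text{for all }n\ge N,\ \gamma\in F,\ t\in\R .
\]

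Then I would run the recursion. Fix $(\delta_j)_{j\ge2}\subset(0,1)$ with $\sum_{j\ge2}\delta_j<\infty$, put $n_1:=1$, and, having chosen $n_1<\dots<n_j$, apply the previous step with $F=\lspan\{\gamma_{n_1},\dots,\gamma_{n_j}\}$ and $\delta=\delta_{j+1}$ to pick $n_{j+1}>n_j$ such that $\norm{\gamma+t\gamma_{n_{j+1}}}\ge(1-\delta_{j+1})(\norm{\gamma}+\abs{t})$ for all $\gamma\in\lspan\{\gamma_{n_1},\dots,\gamma_{n_j}\}$ and $t\in\R$. A one-line induction on $j$ gives, for all $k_0\le j$ and all $(a_k)\in c_{00}$,
\[
\Bigl\|\sum_{k=k_0}^{j}a_k\gamma_{n_k}\Bigr\|\ \ge\ \Bigl(\prod_{i>k_0}(1-\delta_i)\Bigr)\sum_{k=k_0}^{j}\abs{a_k},
\]
and since $\prod_{i>k_0}(1-\delta_i)\to1$ as $k_0\to\infty$, it follows that $\mathbb M:=\{n_k:k\in\N\}$ works. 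The $\gamma_{n_k}$ are automatically pairwise distinct: taking $\gamma=\gamma_{n_k}$ and $t=-1$ at step $j+1$ forces $\norm{\gamma_{n_k}-\gamma_{n_{j+1}}}\ge2(1-\delta_{j+1})>0$. Hence $(\gamma_n)_{n\in\mathbb M}$ is an asymptotically isometric $\ell_1$-basis.

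I do not anticipate a genuine obstacle, since Theorem~\ref{pr:jrz_improved} carries all the analytic content and the rest is the standard compactness-and-perturbation packaging; if anything, the only step warranting a moment's care is the passage from pointwise to uniform convergence in the second paragraph, which relies precisely on the equicontinuity of the $f_n$ and the continuity of the limit on a compact domain.
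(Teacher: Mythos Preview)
Your proof is correct and follows essentially the same route as the paper's: both deduce the corollary from Theorem~\ref{pr:jrz_improved} via the standard recursive construction of an asymptotically isometric $\ell_1$-basis, choosing $n_{k+1}$ so that $\norm{x+a\gamma_{n_{k+1}}}\ge(1-\delta_{k+1})(\norm{x}+\abs{a})$ for all $x\in\lspan\{\gamma_{n_1},\dots,\gamma_{n_k}\}$. The paper simply labels this step a ``routine argument'', whereas you make it explicit by recording the two-sided limit (applying the theorem also to $-\gamma_n$) and passing from pointwise to uniform convergence on $B_F\times[-1,1]$ via equicontinuity; the rescaling works because $\max\{\norm{\gamma},\abs{t}\}\le\norm{\gamma}+\abs{t}$.
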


\begin{proof}
Let $(\varepsilon_n)$ be a sequence of positive numbers such that $\prod_{n=1}^\infty (1-\varepsilon_n)>0$.
A routine argument employing the conclusion of Theorem~\ref{pr:jrz_improved} allows us to choose $(\gamma_{n_k})$ iteratively so that for every $x\in \lspan\set{\gamma_{n_1},\ldots,\gamma_{n_k}}$ one has $\norm{x+a\gamma_{n_{k+1}}}\geq (1-\varepsilon_n)(\norm{x}+\abs{a})$. 
It follows that for every $k_0$ one has
\[
\norm{\sum_{k=k_0}^\infty a_k\gamma_{n_k}}\geq \prod_{n=k_0}^\infty (1-\varepsilon_n)\sum_{k=k_0}^\infty \abs{a_k}.
\]
Since $\prod_{n=k_0}^\infty (1-\varepsilon_n) \to 1$ as $k_0\to \infty$ we get the desired conclusion.
\end{proof}

\begin{remark}\hfill
    \begin{enumerate}[leftmargin=*]
    \item Even if $(\gamma_n)$ is as in Corollary~\ref{pr:sum_distances_l1_basis}, the set $\set{\gamma_n:n\in \N}$ can still be uniformly regular.
    Indeed, denote by $(x_i^{n} , y_i^{n})\subset[0,1]$, $i=1,\ldots,2^{n-1}$ the intervals removed in the $n^\text{th}$ step of the construction of the middle-thirds Cantor set. Now, consider the sequence $(\gamma_n)\subset\F(M)$ defined as
$$\gamma_n := \frac{1}{2^{n-1}}\sum_{i=1}^{2^{n-1}} m_{x_i^{n}y_i^{n}}.$$
Then following the proof of Proposition~\ref{prop:PerfectURnotwpreCPT} one can show that $\set{\gamma_n:n\in \N}$ is uniformly regular.
\item We do not know, on the other hand, whether $(\gamma_n)$ being as in Corollary~\ref{pr:sum_distances_l1_basis} implies that $\set{\gamma_n:n\in \N}$ cannot be a V*-set.
It seems reasonable to look for a counterexample in the space $\Free(C[0,1])$ as $C[0,1]$ has no complemented copy of $\ell_1$ in it.
\item Given $n\in \N$, let $(x_k^n) \subset [0,1]$, for $k=0,\ldots, 2^n$, be the $n$-th generation dyadic numbers, 
and let $y_k^n=x_k^n+2^{-(n+k)}$. 
Then $\sum_k d(x_k^n,y_k^n) \to 0$ with $n\to \infty$ but for any infinite $\M\subset \N$ we have $[0,1] \subset \overline{\bigcup_{\M}\supp \gamma_n}$. (Here the exact definition of $\gamma_n$ is inconsequential, it can be for example $\gamma_n = \frac1{2^n+1}\sum_{k=0}^{2^n} m_{x_k^n y_k^n}$.)
This shows that there are sequences $(\gamma_n)$ as in Corollary~\ref{pr:sum_distances_l1_basis} without any subsequences living in the free space of a complete purely 1-unrectifiable metric space.
\end{enumerate}
\end{remark}

The following corollary which provides another necessary condition for weak precompactness is strongly related to condition (v) of Proposition \ref{Char(a)_equivalent}.
\begin{corollary}\label{c:DistanceToCSM}
    Let $M$ be a complete pointed metric space and let $W\subset\lipfree{M}$ be weakly precompact. Then for every $\ep >0$, there exists $\delta >0$ such that 
    $$\dist\left(\gamma,\SSM{\delta}{M}\right)\ge \frac{\norm{\gamma}}{2}-\varepsilon$$ 
    for every $\gamma \in W$.
\end{corollary}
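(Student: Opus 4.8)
The plan is to argue by contradiction, producing an $\ell_1$-basis inside $W$ via Corollary~\ref{pr:sum_distances_l1_basis}. Suppose the conclusion fails, so there is $\varepsilon>0$ such that for every $n\in\N$ one can find $\gamma_n\in W$ and $\mu_n\in\SSM{1/n}{M}$ with $\norm{\gamma_n-\mu_n}<\tfrac12\norm{\gamma_n}-\varepsilon$. Put $R:=\sup_{\gamma\in W}\norm{\gamma}<\infty$ and $\rho_n:=\gamma_n-\mu_n$. Since $\norm{\rho_n}\ge 0$ we have $\norm{\gamma_n}>2\varepsilon$, and then the triangle inequality gives the elementary bounds
\[
2\varepsilon<\norm{\mu_n}\le\tfrac32 R ,\qquad \norm{\mu_n}-\norm{\rho_n}\ge\norm{\gamma_n}-2\norm{\rho_n}>2\varepsilon ,
\]
using $\norm{\mu_n}\ge\norm{\gamma_n}-\norm{\rho_n}>\tfrac12\norm{\gamma_n}+\varepsilon$ for the lower bound and $\norm{\mu_n}<\tfrac32\norm{\gamma_n}$ for the upper one.

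Next I would normalize. Write $\mu_n=\sum_i a_i^n m_{x_i^n y_i^n}$ for a convex sum of molecules witnessing $\mu_n\in\SSM{1/n}{M}$, so that $\sum_i\abs{a_i^n}=\norm{\mu_n}$ and $\sum_i d(x_i^n,y_i^n)<1/n$. Then $\nu_n:=\mu_n/\norm{\mu_n}\in S_{\lipfree{M}}$ is again a convex sum of molecules, over the same pairs of points, and $\sum_i d(x_i^n,y_i^n)\to 0$. Hence Corollary~\ref{pr:sum_distances_l1_basis} applies to $(\nu_n)$ and produces an infinite $\M\subset\N$ along which $(\nu_n)_{n\in\M}$ is an asymptotically isometric $\ell_1$-basis.

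The key step is to transfer the $\ell_1$-estimate from $(\nu_n)$ to $(\gamma_n)$ while absorbing the error terms $\rho_n$. Choose $C\in\bigl(1-\tfrac{2\varepsilon}{3R},1\bigr)$ (note $1-\tfrac{2\varepsilon}{3R}>\tfrac23$ since $2\varepsilon<R$). By the definition of an asymptotically isometric $\ell_1$-basis there is a cofinite $\M'\subseteq\M$ such that $\norm{\sum_n b_n\nu_n}\ge C\sum_n\abs{b_n}$ for all finitely supported scalars indexed by $\M'$. For such scalars $(a_n)$, using $\mu_n=\norm{\mu_n}\nu_n$,
\[
\Bigl\|\sum_n a_n\gamma_n\Bigr\|\ge\Bigl\|\sum_n (a_n\norm{\mu_n})\nu_n\Bigr\|-\sum_n\abs{a_n}\norm{\rho_n}\ge\sum_n\abs{a_n}\bigl(C\norm{\mu_n}-\norm{\rho_n}\bigr),
\]
and $C\norm{\mu_n}-\norm{\rho_n}=\bigl(\norm{\mu_n}-\norm{\rho_n}\bigr)-(1-C)\norm{\mu_n}>2\varepsilon-(1-C)\tfrac32 R>\varepsilon$ by the bounds above and the choice of $C$. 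Thus $\norm{\sum_n a_n\gamma_n}\ge\varepsilon\sum_n\abs{a_n}$, and since trivially $\norm{\sum_n a_n\gamma_n}\le R\sum_n\abs{a_n}$, the sequence $(\gamma_n)_{n\in\M'}\subset W$ is equivalent to the unit vector basis of $\ell_1$, contradicting weak precompactness of $W$ (Rosenthal's $\ell_1$-theorem, recalled in Subsection~\ref{section:Defs}).

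I expect the only delicate point to be exactly this transfer step. A bare $\ell_1$-equivalence of $(\nu_n)$ would not suffice, because the errors $\rho_n$ need not tend to zero and may be comparable in size to $\norm{\mu_n}$; what rescues the argument is that the $\ell_1$-constant of an asymptotically isometric $\ell_1$-basis can be pushed arbitrarily close to $1$ by discarding finitely many terms — precisely the strength provided by Corollary~\ref{pr:sum_distances_l1_basis} — so that $(1-C)\norm{\mu_n}$ can be made smaller than the fixed gap $\norm{\mu_n}-\norm{\rho_n}>2\varepsilon$. The remaining manipulations with norms are routine.
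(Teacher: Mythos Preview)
Your proof is correct and follows essentially the same strategy as the paper: argue by contradiction, normalize the $\mu_n$'s, apply Corollary~\ref{pr:sum_distances_l1_basis} to obtain an asymptotically isometric $\ell_1$-basis, and then transfer the $\ell_1$-estimate to $(\gamma_n)$ using that the basis constant can be pushed arbitrarily close to $1$. The only difference is in the execution of the transfer step: the paper normalizes $\gamma_n$ as well, bounds $\bigl\|\gamma_n/\norm{\gamma_n}-\mu_n/\norm{\mu_n}\bigr\|$ via the Massera--Sch\"affer inequality, and then invokes the auxiliary perturbation Lemma~\ref{l:ExceptionnalStabilityOfl1Basis}, whereas your direct estimate $\norm{\sum a_n\gamma_n}\ge\sum\abs{a_n}(C\norm{\mu_n}-\norm{\rho_n})$ bypasses both of these and is arguably more self-contained.
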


For the proof we will need the following ``big-perturbation lemma'' for $\ell_1$-bases.
\begin{lemma}\label{l:ExceptionnalStabilityOfl1Basis}
        Let $(e_n) \subset S_X$ be $C$-equivalent to the $\ell_1$ basis, i.e. $C\sum |a_i| \leq \|\sum a_ie_i\|$. Let $\|y_n-e_n\|\leq D<C$. Then $(y_n)$ is equivalent to the $\ell_1$ basis.
    \end{lemma}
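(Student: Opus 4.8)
The statement is a perturbation estimate, so the plan is simply to bound $\norm{\sum a_i y_i}$ below and above by multiples of $\sum\abs{a_i}$ for arbitrary finitely supported scalars $(a_i)$, using the triangle inequality together with the two hypotheses. First I would fix $(a_i)\in c_{00}$ and write $\sum a_i y_i = \sum a_i e_i + \sum a_i (y_i - e_i)$. For the lower estimate, the reverse triangle inequality gives
\[
\Bnorm{\sum_i a_i y_i} \geq \Bnorm{\sum_i a_i e_i} - \Bnorm{\sum_i a_i (y_i-e_i)} \geq C\sum_i \abs{a_i} - \sum_i \abs{a_i}\,\norm{y_i-e_i} \geq (C-D)\sum_i\abs{a_i},
\]
where the middle inequality uses the $\ell_1$-lower bound for $(e_n)$ in the first term and the triangle inequality followed by $\norm{y_i-e_i}\le D$ in the second. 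Since $D<C$ by hypothesis, the constant $C-D$ is strictly positive, which is the only place the ``big perturbation'' hypothesis is actually needed.

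For the upper estimate, I would just note $\norm{y_i}\le\norm{e_i}+\norm{y_i-e_i}\le 1+D$, hence $\norm{\sum_i a_i y_i}\le (1+D)\sum_i\abs{a_i}$. Combining the two bounds shows that $(y_n)$ is $\tfrac{C-D}{1+D}$-equivalent to the $\ell_1$ basis (and in particular the $y_n$ are linearly independent, so they genuinely form a basic sequence).

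There is essentially no obstacle here: the argument is the classical small-perturbation principle for $\ell_1$-bases, and the only subtlety is bookkeeping the constants so that one only invokes $D<C$ (rather than the more familiar $D<C/2$ type condition), which works because we compare $(y_n)$ directly to $(e_n)$ rather than passing through a Neumann-series stability argument. I would keep the write-up to the two displayed chains above.
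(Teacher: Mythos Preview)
Your proof is correct and coincides with the paper's own argument essentially verbatim: both use the reverse triangle inequality to get the lower bound $(C-D)\sum\abs{a_i}$ and the triangle inequality with $\norm{y_i}\le 1+D$ for the upper bound. The additional remarks on the equivalence constant and linear independence are fine but not needed.
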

    \begin{proof}
         We have $\|\sum a_ny_n\|\geq \|\sum a_n e_n\|-\|\sum a_n(y_n-e_n)\|\geq (C-D)\sum |a_n|$ and $\|\sum a_ny_n\|\leq \sum|a_n|\|y_n\|\leq (1+D)\sum |a_n|$.
    \end{proof}
 \begin{proof}[Proof of Corollary~\ref{c:DistanceToCSM}]
     We prove the contrapositive. Assume that there exists $\varepsilon>0$ such that for every $n\in \N$, there exists $\gamma_n\in W$ such that $$\dist\left(\gamma_n,\SSM{\frac{1}{n}}{M}\right)< \frac{\norm{\gamma_n}}{2}-\varepsilon.$$
    We assume that $W$ is bounded, otherwise it is not weakly precompact and we are done. 
    Let $R=\sup_{\gamma\in W}\norm{\gamma}$.  
    Let $\mu_n\in \SSM{\frac{1}{n}}{M}$ be such that $\norm{\gamma_n-\mu_n}<\frac{\norm{\gamma_n}}{2}-\varepsilon$. 
    We show that $(\gamma_n)$ has a subsequence equivalent to the $\ell_1$ basis and thus $W$ is not weakly precompact.
    First observe that $2\varepsilon<\norm{\gamma_n}\leq R$, so we may instead find a subsequence of $(\frac{\gamma_n}{\norm{\gamma_n}})$ equivalent to the $\ell_1$-basis.
    Since $2\varepsilon<\norm{\mu_n}$ for every $n$, Corollary~\ref{pr:sum_distances_l1_basis} yields a subsequence of $(\frac{\mu_n}{\norm{\mu_n}})$ which is an asymptotically isometric $\ell_1$-basis.
    Thus if we find $D<1$ such that $\norm{\frac{\gamma_n}{\norm{\gamma_n}}-\frac{\mu_n}{\norm{\mu_n}}}\leq D$ for every $n$, Lemma~\ref{l:ExceptionnalStabilityOfl1Basis} will lead to the desired conclusion.
    But we have by the classical Massera-Sch\"affer inequality~\cite[Tapa 15]{HU}
    \[
    \norm{\frac{\gamma_n}{\norm{\gamma_n}}-\frac{\mu_n}{\norm{\mu_n}}}\leq \frac{2\norm{\gamma_n-\mu_n}}{\max\set{\norm{\gamma_n},\norm{\mu_n}}}\leq \frac{\norm{\gamma_n-\mu_n}}{\norm{\gamma_n-\mu_n}+\varepsilon}\leq 1-\frac{2\varepsilon}{R}
    \]
    so setting $D=1-\frac{2\varepsilon}{R}$ finishes the proof.
    \end{proof}

\section{Final remarks}

In line with the previous two sections, we may characterize norm compactness by imposing conditions on representations in terms of series of molecules. Roughly speaking, elements of a compact set can be approximated by series of ``large molecules'', that is, each molecule is formed by a pair of points separated by a distance of at least some fixed $\delta>0$. Before stating the result, it is worth recalling that the map $Q: (a_{(x,y)})_{(x,y) \in \widetilde{M}} \mapsto \sum_{(x,y) \in \widetilde{M}} a_{(x,y)} m_{x y}$ defines a linear quotient map from $\ell_1(\widetilde{M})$ onto $\F(M)$.
 
\begin{proposition}\label{p:CharCompacts}
	Let $M$ be a complete metric space and $K \subset \Free(M)$ be bounded. Then the following assertions are equivalent:
	\begin{enumerate}[$(i)$]
		\item $K$ is relatively compact.
		\item There exists $c\geq 1$ such that: $\forall \varepsilon>0$, $\exists \delta>0$, $\exists C\subset M$ compact,  $\forall \mu \in K$,  $\exists a \in \ell_1(\widetilde{C})$ with $\|Qa-\mu\|<\varepsilon$, $\|a\|_1 \leq c\|\mu\|$ and $\sum_{d(x,y)<\delta} |a_{(x,y)}|<\varepsilon$.
	\end{enumerate}
\end{proposition}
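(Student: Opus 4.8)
The plan is to reduce both implications to Proposition~\ref{norm-compactness}, which identifies relative compactness of a bounded set in $\Free(M)$ with approximability, up to any $\varepsilon$, by $\Free(F)$ for some \emph{finite} $F\subset M$.

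For $(i)\implies(ii)$ I would take $c=2$ once and for all. Given $\varepsilon>0$, Proposition~\ref{norm-compactness} yields a finite $F\subset M$ with $K\subset\Free(F)+\tfrac\varepsilon2 B_{\Free(M)}$; set $C:=F\cup\set{0}$ (finite, hence compact) and $\delta:=\min\set{d(x,y):x\neq y\in C}>0$. For $\mu\in K$ with $\norm\mu<\varepsilon$ take $a=0$; otherwise choose $\nu\in\Free(F)$ with $\norm{\mu-\nu}\leq\tfrac\varepsilon2$. Since $C$ is finite, $\nu$ is finitely supported with all endpoints in $C$, so it is a convex sum of molecules $\nu=Qa$ with $a\in\ell_1(\widetilde C)$ and $\norm a_1=\norm\nu\leq\norm\mu+\tfrac\varepsilon2\leq\tfrac32\norm\mu\leq c\norm\mu$. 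Every pair appearing in $a$ consists of distinct points of $C$, so $d(x,y)\geq\delta$ there, whence $\sum_{d(x,y)<\delta}\abs{a_{(x,y)}}=0<\varepsilon$, while $\norm{Qa-\mu}=\norm{\nu-\mu}<\varepsilon$. This is exactly $(ii)$.

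For $(ii)\implies(i)$ the strategy is: fix $\varepsilon>0$, construct a finite $F\subset M$ with $K\subset\Free(F)+\varepsilon B_{\Free(M)}$, and conclude by Proposition~\ref{norm-compactness}. Put $R:=\sup_{\mu\in K}\norm\mu$ (if $R=0$ then $K=\set{0}$ is trivially compact, so assume $R>0$) and apply $(ii)$ with $\varepsilon/3$, obtaining $\delta>0$ and a compact $C$. Choose $\eta>0$ smaller than both $\delta/2$ and $\varepsilon\delta/(12cR)$, let $F$ be a finite $\eta$-net of $C$ together with $\set{0}$, and for each $x\in C$ fix $x'\in F$ with $d(x,x')\leq\eta$. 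For $\mu\in K$, take $a\in\ell_1(\widetilde C)$ from $(ii)$ and split $Qa=\sum_{d(x,y)\geq\delta}a_{(x,y)}m_{xy}+\sum_{d(x,y)<\delta}a_{(x,y)}m_{xy}$: the second sum has norm at most $\sum_{d(x,y)<\delta}\abs{a_{(x,y)}}<\varepsilon/3$. In the first sum, if $d(x,y)\geq\delta$ then $d(x',y')\geq d(x,y)-2\eta>0$, so $m_{x'y'}$ is a genuine molecule with endpoints in $F$, and the standard identity for differences of molecules \cite[Lemma~1.2]{VeeorgStudia} gives
\[
\norm{m_{xy}-m_{x'y'}}=\frac{d(x,x')+d(y,y')+\abs{d(x,y)-d(x',y')}}{\max\set{d(x,y),d(x',y')}}\leq\frac{4\eta}{\delta}.
\]
Hence $\big\|\sum_{d(x,y)\geq\delta}a_{(x,y)}(m_{xy}-m_{x'y'})\big\|\leq\tfrac{4\eta}{\delta}\norm a_1\leq\tfrac{4\eta}{\delta}cR<\varepsilon/3$, and $\sum_{d(x,y)\geq\delta}a_{(x,y)}m_{x'y'}\in\Free(F)$. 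Adding the three errors gives $\dist(\mu,\Free(F))<\varepsilon$ uniformly in $\mu\in K$, so $K\subset\Free(F)+\varepsilon B_{\Free(M)}$; since $\varepsilon$ was arbitrary, Proposition~\ref{norm-compactness} finishes it.

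The main obstacle lies entirely in $(ii)\implies(i)$, and this is precisely where the hypotheses are used: to collapse the endpoints of the molecules onto a finite net at controlled cost one needs the uniform bound $\norm a_1\leq c\norm\mu\leq cR$ (an unbounded $\ell_1$-norm of the representing sequence would make the perturbation step hopeless), and one can only perform this collapse on the ``large'' molecules (those with $d(x,y)\geq\delta$), after shrinking the net below $\delta/2$, so that none of them degenerates to $0$ when its endpoints are moved. The role of the condition $\sum_{d(x,y)<\delta}\abs{a_{(x,y)}}<\varepsilon$ is exactly that the remaining ``small'' molecules, which the net argument cannot handle, can be discarded at a uniformly negligible cost.
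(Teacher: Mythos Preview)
Your proof is correct. The forward implication is essentially the paper's argument, routed through Proposition~\ref{norm-compactness} rather than an explicit finite cover; your case split at $\norm{\mu}<\varepsilon$ and choice $c=2$ is in fact a bit more careful than the paper's claim of $c=1$ (which requires a small adjustment, since $\norm{m_i}$ need not be bounded by $\norm{\mu}$).

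For $(ii)\implies(i)$ the two arguments genuinely diverge. The paper observes that the set $\{m_{xy}:(x,y)\in\widetilde{C}_\delta\}$ is compact in $\Free(M)$ (by continuity of $(x,y)\mapsto m_{xy}$ on the compact $\widetilde{C}_\delta$), deduces that $Q\restriction_{\ell_1(\widetilde{C}_\delta)}$ is a compact operator, and concludes that $K$ lies within $2\varepsilon$ of the relatively compact set $Q(cR\,B_{\ell_1(\widetilde{C}_\delta)})$. You instead take a finite $\eta$-net $F$ of $C$ and perturb each large molecule $m_{xy}$ to $m_{x'y'}$ with endpoints in $F$, controlling the error via the explicit formula from \cite{VeeorgStudia} and the bound $\norm{a}_1\leq cR$. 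Your route is more elementary and self-contained, landing directly on the finite-set criterion of Proposition~\ref{norm-compactness}; the paper's route is shorter and isolates a reusable lemma (compactness of $Q$ on $\ell_1(\widetilde{C}_\delta)$), but at the cost of an extra statement. Both make the roles of the three hypotheses in $(ii)$ equally transparent.
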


\begin{proof}
	$(i) \implies (ii)$: Fix $\ep >0$ and an open cover $K \subset \bigcup_{i=1}^n B(m_i,\ep)$, with $m_i$ being finitely supported for every $i \in \{1, \ldots , n\}$. Each $m_i$ can be written as a convex sum of molecules $m_i = \sum_{k=1}^{n_i} a_{(x_k^i , y_k^i)} m_{x_k^i y_k^i}$. Let $\delta:=\underset{i,k}{\min}\, d(x_k^i , y_k^i)$. Then assertion $(ii)$ clearly follows with $c = 1$ and
    $$C = \{x_k^i : 1 \leq k \leq n_i, \, 1\leq i \leq n\} \cup \{y_k^i : 1 \leq k \leq n_i, \, 1\leq i \leq n\}.$$
	
	$(ii) \implies (i)$ We will prove that for every $\ep>0$, there exists a relatively compact subset $S \subset \F(M)$ such that $K \subset S + 2\ep B_{\F(M)}$. This readily implies that $K$ is relatively compact, as required. Let us fix $\ep> 0$. Consider $\delta>0$ and $C \subset M$ compact, as specified in $(ii)$.
	Now consider $\mu \in K$. By assumption, there exists $a \in \ell_1(\widetilde{C})$ with $\|Qa-\mu\|<\varepsilon$, $\|a\|_1 \leq c\|\mu\|$ and $\sum_{d(x,y)<\delta} |a_{(x,y)}|<\varepsilon$. Denote $e_{(x,y)} \in \ell_1(\widetilde{M})$ the element which is 0 everywhere except at the pair $(x,y)$, where it takes the value 1. We express $a$ as follows:
	$$a= \sum_{d(x,y)\geq \delta}a_{(x,y)}e_{(x,y)} + \sum_{d(x,y)<\delta}a_{(x,y)}e_{(x,y)}.$$
	If we denote $\widetilde{C}_\delta=\{(x,y) \in \widetilde{C}: d(x,y)\geq \delta\}$ then it is clear that 
	$$Qa \in Q(c\norm{\mu}\cdot B_{\ell_1(\widetilde{C}_{\delta})})+\varepsilon B_{\Free(M)}.$$ Hence
	$K \subset Q(cRB_{\ell_1(\widetilde{C}_\delta)})+2\varepsilon B_{\Free(M)}$ where $R=\max\set{\norm{\mu}:\mu\in K}$.
	Now the fact that $Q(cRB_{\ell_1(\widetilde{C}_{\delta})})$ is relatively compact follows from the next lemma.
\end{proof}

\begin{lemma} Let $C \subset M$ be compact. For $\delta>0$ denote $\widetilde{C}_\delta=\{(x,y) \in \widetilde{C}: d(x,y)\geq \delta\}$. Then $Q\restriction_{\ell_1(\widetilde{C}_\delta)}$ is a compact operator.
\end{lemma}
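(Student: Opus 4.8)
The plan is to invoke the standard characterization: a bounded linear operator $T$ from $\ell_1(I)$ into a Banach space $X$ is compact if and only if the set $\set{Te_i:i\in I}$ is relatively compact in $X$. Only the ``if'' direction is needed here, and its proof is routine: since $B_{\ell_1(I)}=\overline{\conv}(\set{\pm e_i:i\in I})$, one has $T(B_{\ell_1(I)})\subseteq\overline{\conv}(\set{0}\cup\set{\pm Te_i:i\in I})$, and by Mazur's theorem the closed convex hull of a relatively compact subset of a Banach space is relatively compact; hence $T(B_{\ell_1(I)})$ is relatively compact.

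Applying this with $I=\widetilde{C}_\delta$ and $T=Q\restriction_{\ell_1(\widetilde{C}_\delta)}$, it suffices to show that
$$\set{Qe_{(x,y)}:(x,y)\in\widetilde{C}_\delta}=\set{m_{xy}:x,y\in C,\ d(x,y)\geq\delta}$$
is relatively compact in $\Free(M)$. First I would note that $\widetilde{C}_\delta=\set{(x,y)\in C\times C:d(x,y)\geq\delta}$ is a closed subset of the compact space $C\times C$ (as $d$ is continuous), hence itself compact; if $\diam(C)<\delta$ then $\widetilde{C}_\delta=\emptyset$ and the statement is trivial. Then I would check that the map $\Phi\colon(x,y)\mapsto m_{xy}=\frac{\delta(x)-\delta(y)}{d(x,y)}$ is norm-continuous on $\widetilde{C}_\delta$: if $(x_n,y_n)\to(x,y)$ in $\widetilde{C}_\delta$, then $\norm{\delta(x_n)-\delta(x)}=d(x_n,x)\to0$, likewise $\norm{\delta(y_n)-\delta(y)}\to0$, while $d(x_n,y_n)\to d(x,y)\geq\delta>0$, so $m_{x_ny_n}\to m_{xy}$ in norm. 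Consequently $\Phi(\widetilde{C}_\delta)$ is the continuous image of a compact set, hence compact.

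Putting the two steps together shows that $Q\restriction_{\ell_1(\widetilde{C}_\delta)}$ maps $B_{\ell_1(\widetilde{C}_\delta)}$ into a relatively compact set, i.e.\ it is a compact operator. I do not expect a genuine obstacle here: the substantive inputs are the identity $B_{\ell_1(I)}=\overline{\conv}(\set{\pm e_i})$ combined with Mazur's theorem, and the continuity of $\Phi$, which relies crucially on the uniform lower bound $d(x,y)\geq\delta$ to keep the denominator away from $0$ --- without it neither need $\Phi$ be continuous nor the set of all molecules over $C$ be relatively compact.
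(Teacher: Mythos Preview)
Your proof is correct and follows essentially the same approach as the paper: reduce to relative compactness of $\set{m_{xy}:(x,y)\in\widetilde{C}_\delta}$ via the identity $Q(B_{\ell_1(\widetilde{C}_\delta)})\subset\overline{\conv}\set{m_{xy}}$ and Mazur's theorem, then use compactness of $\widetilde{C}_\delta$ together with norm-continuity of $(x,y)\mapsto m_{xy}$ away from the diagonal. The only cosmetic difference is that the paper phrases the last step as a sequential argument rather than invoking continuity of $\Phi$ directly.
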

\begin{proof}
Since $Q(B_{\ell_1(\widetilde{C}_\delta)}) \,\subset \, \overline{\conv}\{Q(e_{x,y}) : (x,y) \in \widetilde{C}_\delta\}$, it is enough to show that $\{Q(e_{(x,y)}) : (x,y) \in  \widetilde{C}_\delta\}$ is compact. Indeed, let $(x_n,y_n) \subset \widetilde{C}_\delta$. By compactness of $C$ and passing to a subsequence if necessary, we may assume that $(x_n)$ and $(y_n)$ converge to $x$ and $y$ respectively. Clearly $d(x,y) \geq \delta$ and so $Q(e_{x_n,y_n}) = m_{x_ny_n} \to m_{x y} = Q(e_{(x,y)})$.
\end{proof}

\begin{remark}
We wish to highlight an alternative characterization of compact sets in Lipschitz free spaces, as established in \cite{LCGarcia}. To set the stage, recall Grothendieck’s compactness principle \cite[p. 112]{Grothendieck}, which states that every norm-compact subset of a Banach space is contained in the closed convex hull of a norm-null sequence. Adapting this principle to the setting of Lipschitz free spaces requires additional work, as demonstrated in \cite[Corollary~3.5]{LCGarcia} (albeit in a slightly different context). Specifically, a closed subset $K\subset \F(M)$ is compact if and only if there exist sequences $(\alpha_n)_n \in c_0$ (with $\alpha_n > 0$) and $\big((x_n,y_n)\big)_n \subset \widetilde{M}$ such that 
$$K \subset \overline{\mathrm{conv}}\{\alpha_n m_{x_n y_n} : n \in \N\}.$$ 
This characterization once again highlights the pivotal role of elementary molecules.
\end{remark}

\section*{Acknowledgments}

This work was initiated while the first two named authors were invited by Professor E. Perneck\'a at the Czech Technical University in Prague in 2023. They are grateful to Professor E. Perneck\'a for valuable discussions and for providing excellent working conditions during their visit. The authors also wish to thank Professor A. Quilis for useful conversations. 
The first and last named authors visited Universit\'e de Franche-Comt\'e/Universit\'e Marie et Louis Pasteur on several occasions, for which they wish to express their gratitude.

R. J. Aliaga was partially supported by Grant PID2021-122126NB-C33 funded by MICIU/AEI/10.13039/501100011033 and by ERDF/EU.

This work was partially supported by the French ANR projects ANR-20-CE40-0006 and ANR-24-CE40-0892-01.

T. Veeorg was supported by the Estonian Research Council grant (PRG1901).

A. Proch\'azka and T. Veeorg were partially supported by the PARROT French-Estonian science and technology cooperation programme.


\end{document}